\DeclareMathAlphabet{\mathpzc}{OT1}{pzc}{m}{it}
\newcommand{\ta}{\theta}
   \renewcommand{\b}{\beta}
\renewcommand{\d}{\delta}   \newcommand{\e}{\epsilon}
   \renewcommand{\l}{\lambda}
\newcommand{\newvec}[1]{\vbox{\ialign{##\crcr$\displaystyle\rightharpoonup$\crcr\noalign{\kern-1pt\nointerlineskip}
$\hfil\displaystyle{#1}\hfil$\crcr}}} 
\newcommand{\longrightharpoonup}{\relbar\joinrel\rightharpoonup}
\newcommand{\longvec}[1]{\vbox{\ialign{##\crcr$\displaystyle\longrightharpoonup$\crcr\noalign{\kern-1pt\nointerlineskip}
$\hfil\displaystyle{#1}\hfil$\crcr}}} 
\newtheorem{theorem}{Theorem}[section]
\newtheorem{definition}[theorem]{Definition}
\newtheorem{lemma}[theorem]{Lemma}
\newtheorem{proposition}[theorem]{Proposition}
\newtheorem{remark}{Remark}[section]
\numberwithin{equation}{section}
\newcommand{\baa}{\begin{array}}
\newcommand{\eaa}{\end{array}}
\def\R{\mathbb{R}}
\def\epsilon{\varepsilon}
\newcommand{\bi}{\begin{itemize}}
\newcommand{\ei}{\end{itemize}}
\newcommand{\ben}{\begin{enumerate}}
\newcommand{\een}{\end{enumerate}}
\newcommand\be{\begin{equation}}
\newcommand\ee{\end{equation}}
\newcommand\bea{\begin{eqnarray}}
\newcommand\eea{\end{eqnarray}}
\newcommand\beaa{\begin{eqnarray*}}
\newcommand\eeaa{\end{eqnarray*}}
\begin{document}
\title[persistence of preys]{Persistence of preys in a diffusive three species predator-prey system with a pair of strong-weak competing preys}

\author[Y.-S. Chen]{Yu-Shuo Chen}
\address{Department of Mathematics, Tamkang University, Tamsui, New Taipei City 251301, Taiwan}
\email{formosa1502@gmail.com}

\author[T. Giletti]{Thomas Giletti}
\address{Institut Elie Cartan de Lorraine, UMR 7502, University of Lorraine, 54506 Vandoeuvre-l\`es-Nancy, France}
\email{thomas.giletti@univ-lorraine.fr}

\author[J.-S. Guo]{Jong-Shenq Guo}
\address{Department of Mathematics, Tamkang University, Tamsui, New Taipei City 251301, Taiwan}
\email{jsguo@mail.tku.edu.tw}

\thanks{Corresponding author: J.-S. Guo}

\thanks{ {\em 2010 Mathematics Subject Classification.} Primary: 35K40, 35K57; Secondary: 34B40, 92D25.}

\thanks{{\em Key words and phrases:} predator-prey system, persistence, traveling wave, invaded state, invading state.}

\begin{abstract}
We investigate the traveling wave solutions of a three-species system involving a single predator and a pair of strong-weak competing preys.
Our results show how the predation may affect this dynamics. More precisely, we describe several situations where the environment is initially inhabited by the predator and by either one of the two preys.
When the weak competing prey is an aboriginal species, we show that there exist traveling waves where the strong prey invades the environment and either replaces its weak counterpart,
or more surprisingly the three species eventually co-exist.
Furthermore, depending on the parameters, we can also construct traveling waves where the weaker prey actually invades the environment initially inhabited by its strong competitor and the predator. In all those situations, we find the infimum of the set of admissible wave speeds; these results are sharp at least when the three species diffusive at the same speed, where we show that a traveling wave exists if and only its speed is larger or equal to this infimum.
\end{abstract}

\maketitle
\setlength{\baselineskip}{18pt}
\section{Introduction}
\setcounter{equation}{0}

In this paper we consider the following system
\be\label{main}
\begin{cases}
u_t = d_1u_{xx} + r_1u(1-u-kv-b_1w),x\in\R,~t>0,\\
v_t = d_2v_{xx} + r_2v(1-hu-v-b_2w),x\in\R,~t>0,\\
w_t = d_3w_{xx} + r_3w(-1+au+av-w),x\in\R,~t>0,
\end{cases}
\ee
in which $u(x,t)$ and $v(x,t)$ stand for the densities of two preys and $w(x,t)$ is the density of the predator at $(x,t)$, $d_i$ and $r_i$, $i=1,2,3$, are their diffusion coefficients and
intrinsic growth rates, respectively, $h$ and $k$ denote the interspecific competition coefficients of two preys, the carrying capacities of two preys are assumed to
be $1$, $b_1$ and $b_2$ are predation rates of $u$ and $v$, respectively, and the conversion rates for both preys are assumed to be $a$.

Throughout this paper, we always assume that the parameters $d_i$, $r_i$ ($i=1,2,3$), $h,k,a,b_1$ and $b_2$ are all positive such that
\be\label{cond}
a>1,\quad h<1<k.
\ee
In particular, the single predator $w$ cannot survive without feeding on the preys, yet it can live together with either of those two preys. Moreover, the preys are competing and, in the absence of the predator $w$, the prey~$v$ is the strong competitor and the prey $u$ is the weak competitor. However, both preys undergo a priori different predation rates, and therefore the presence of the predator may invert their roles and lead to new dynamics, as we shall show in our main results.

Throughout this work, we shall consider situations when the single predator $w$ is an aboriginal species, and one of the two preys $u$ and $v$ is aboriginal while the other is alien. Our aim is to see the role of the predator in the ecological system \eqref{main} and in particular on the persistence of these two preys. It turns out that the following scenarios can happen, depending on the parameters:
\begin{enumerate}
\item
three species can co-exist, no matter which prey is alien;
\item
the alien strong competitor $v$ can replace the aboriginal weak competitor $u$ to co-exist with the predator $w$;
\item more surprisingly, it seems that the alien weak competitor $u$ may replace the aboriginal strong competitor $v$ to co-exist with the predator $w$ if it is more resistant to predation.
\end{enumerate}


\subsection{The ODE system}\label{sec:ODE}

Let us start with some preliminary study of solutions of the diffusionless ODE system. Aside from the trivial steady state $(0,0,0)$, and since $a>1$,
we always have the existence of the \textit{semi-co-existence} states $E_{*} = (0,v_{*},w_{*})$  and $E^{*} = (u^{*},0,w^{*})$, where
\bea
&& u^{*} := \frac{1+b_1}{1+ab_1},\quad w^{*} := \frac{a-1}{1+ab_1};\label{vprey-free}\\
&& v_{*} := \frac{1+b_2}{1+ab_2},\quad w_{*} := \frac{a-1}{1+ab_2}.\label{uprey-free}
\eea
Let us briefly describe the stability of these two constant states. To do so, we define
\beaa
\b_{*}:= 1-kv_{*}-b_1w_{*},\; \b^{*}:= 1-hu^{*}-b_2w^{*},
\eeaa
or equivalently
\be\label{bul*}
\b_{*} = \frac{-b_1(a-1)+b_2(a-k)-(k-1)}{1+ab_2},
\; \b^{*} = \frac{b_1(a-h)-b_2(a-1)+(1-h)}{1+ab_1}.
\ee
{It is then easy to see that $\b^*>0$ (resp. $\b_* >0 $) implies that $E^{*}$ (resp. $E_{*}$) is unstable in the ODE sense. On the other hand, if $\b^* < 0$ (resp. $\b_* <0$) then $E^{*}$ (resp. $E_{*}$) is stable instead, again in the ODE sense. From \eqref{bul*}, one can check that $\b^*>0$ if and only if
\be\label{positive}
b_2<\frac{a-h}{a-1}b_1+\frac{1-h}{a-1}.
\ee
Also, $\b_*>0$ if and only if
\be\label{positive2}
a > k \; \mbox{ and } \; b_2>\frac{a-1}{a-k}b_1+\frac{k-1}{a-k}.
\ee

Finally, there exists at most one more (positive) constant state, in which the three species co-exist. This co-existence state only exists in some parameter range,
 in particular, one needs $\triangle \neq 0$, where
\beaa
\triangle :=1-hk + ab_1(1-h) - ab_2(k-1).
\eeaa
By Cramer's rule, when $\triangle\neq 0$ and if the co-existence state $E_c:=(u_c,v_c,w_c)$ exists, then
\be\label{co-ex}
u_c = \frac{\triangle_{u}}{\triangle}>0,\; v_c = \frac{\triangle_{v}}{\triangle}>0,\;w_c=\frac{\triangle_{w}}{\triangle}>0,
\ee
where
\beaa
&& \triangle_{u} := - b_1(a-1) + b_2(a-k) - (k-1),\quad \triangle_{v} := b_1(a-h) - b_2(a-1) + (1-h),\\
&& \triangle_{w} := a(2-h-k)-(1-hk).
\eeaa
Assume the existence of the (unique) positive co-existence state $E_c$. 
Since the characteristic equation associated with the linearized system around $E_c$ is given by $\l^3+a_2\l^2+a_1\l+a_0 = 0$, where
\beaa
&&{a_2:=r_1u_c+r_2v_c+r_3w_c,}\\
&&{a_1:=r_1r_2u_cv_c(1-hk)+r_1r_3u_cw_c(1+ab_1)+r_2r_3v_cw_c(1+ab_2),}\\
&&{a_0:=(r_1r_2r_3u_cv_cw_c)\triangle,}
\eeaa
it is clear that $E_c$ is unstable, if $\triangle<0$. Moreover, one can also check that $E_c$ is stable, if $\triangle>0$ and $hk<1$. We point out that if either $E^*$ or $E_*$ is stable, then $E_c$ (if exists) must be unstable in the ODE sense. Indeed, if $E^*$ is stable so that $\beta^*<0$, then $\triangle_v<0$ and so $\triangle <0$ (if $E_c$ exists) which implies that $E_c$ is unstable.

Let us mention a special case when $E_c$ is stable, whose interest is that a particular Lyapunov function then exists, which we shall use to classify entire in time solutions of \eqref{main}; see Lemma~\ref{LE-entire-full} below. Precisely, if $E_c$ exists, i.e., \eqref{co-ex} holds, and if moreover
\begin{equation}\label{ODE_lyapu}
k \sqrt{\frac{b_2}{b_1}} + h \sqrt{\frac{b_1}{b_2}} < 2,
\end{equation}
then $E_c$ is stable while $E^*$ and $E_*$ are unstable. Indeed, \eqref{ODE_lyapu} can be rewritten as $h k <1$ and
$$\frac{2-hk - 2 \sqrt{1-hk}}{k^2} b_1 < b_2 < \frac{2-hk + 2 \sqrt{1-hk}}{k^2} b_1.$$
We first show that $E_c$ is stable, i.e., $\triangle >0$.
We assume by contradiction that $\triangle <0$. Then, for $E_c$ to exist, it must hold that $\triangle_u<0$, $\triangle_v<0$, and $\triangle_w<0$. To have $\triangle < 0$, we need
$$b_2  > b_1 \frac{1-h}{k-1}.$$
Recalling the previous inequality, it follows that
$$b_1 \frac{1-h}{k-1} <\frac{2-hk + 2 \sqrt{1-hk}}{k^2} b_1,$$
i.e.,
$$ g(k):= \frac{2-hk + 2 \sqrt{1-hk}}{k^2} \cdot \frac{k-1}{1-h} > 1.$$
The derivative of $g(k)$ has the same sign as
$$(2-k)(2-hk + 2 \sqrt{1-hk}) - h (k^2 -k) (1 + \frac{1}{\sqrt{1-hk}}),$$
which is negative for $k \in ( 2, 1/h)$, and decreasing with respect to $k \in (1,2)$. Also, for $k = 2-h$, it is equal to 0. It follows that $g(k)$ is increasing with respect to $k \in [1, 2-h]$, then decreasing. Furthermore, $g(2-h)=1$. Since $g(2-h)$ is the maximum of $g$, we have reached a contradiction. Hence $\triangle >0$ and $E_c$ is stable. The existence of $E_c$ also implies that $\triangle_u$, $\triangle_v$ and $\triangle_w$ are positive. It follows that \eqref{positive} and \eqref{positive2} hold, and in particular $E_*$ and $E^*$ are unstable.



\subsection{The notion of a traveling wave}

To see the persistence of preys, our approach is to study the traveling wave solutions connecting two appropriate constant states of system~\eqref{main}.
A solution of \eqref{main} is called a \textit{traveling wave} solution with speed $s$, if there exist positive functions $\{\phi_1,\phi_2,\phi_3\}$ defined on $\R$ such that
$u(x,t) = \phi_1(x+st)$, $v(x,t) = \phi_2(x+st)$ and $w(x,t) = \phi_3(x+st)$; here $\phi_j$, $j=1,2,3$, are the wave profiles and are assumed to converge at $\pm \infty$ to constant states to be specified below.

Letting $z:=x+st$ and substituting $(u,v,w)(x,t) = (\phi_1,\phi_2,\phi_3)(z)$ into \eqref{main}, we get that $(s,\phi_1,\phi_2,\phi_3)$ must satisfy the following system of equations:
\be\label{TWS}
\begin{cases}
  d_1\phi_1''(z) -s\phi_1'(z)+ r_1\phi_1(z)[ 1-\phi_1(z)-k\phi_2(z)-b_1\phi_3(z)]=0,\;z\in\R,\\
  d_2\phi_2''(z) -s\phi_2'(z)+ r_2\phi_2(z)[ 1-h\phi_1(z)-\phi_2(z)-b_2\phi_3(z)]=0,\;z\in\R,\\
  d_3\phi_3''(z) -s\phi_3'(z)+ r_3\phi_3(z)[-1+a\phi_1(z)+a\phi_2(z)-\phi_3(z)]=0,\;z\in\R,
\end{cases}
\ee
where the prime denotes the derivative with respect to $z$.

Throughout this work, we shall consider several types of traveling waves which differ from each other by their limits as $z \to \pm \infty$. Up to the symmetric change of variables $x \leftarrow -x$, we can always assume that
$$s \geq 0,$$
and therefore we shall refer to the limit of $(\phi_1,\phi_2,\phi_3)$ at $-\infty$ as the \textit{invaded state} or \textit{unstable tail}, and to the limit at $\infty$ as the \textit{invading state} or \textit{stable tail}.

As we mentioned before, we shall assume that the predator is aboriginal and consider the two cases where either of the two preys co-exists with the predator. Therefore we shall assume at the unstable tail that either
\be\label{BCv}
\lim_{z\rightarrow -\infty} (\phi_1(z),\phi_2(z),\phi_3(z)) = E^{*}:= (u^{*},0,w^{*}),
\ee
or
\be\label{BCu}
\lim_{z\rightarrow -\infty} (\phi_1(z),\phi_2(z),\phi_3(z)) = E_{*}:= (0,v_{*},w_{*}).
\ee
On the other hand, depending on the parameters we shall face two situations where either the three species eventually co-exist,
or the aboriginal prey goes to extinction and is replaced by the alien prey. In the former case, we shall have the asymptotic boundary condition at the stable tail
\be\label{Bc}
\lim_{z\rightarrow\infty} (\phi_1(z),\phi_2(z),\phi_3(z)) = E_c:=(u_c,v_c,w_c).
\ee
In the latter case, one must distinguish whether the aboriginal prey is the weak or the strong one; that is, we shall have either \eqref{BCv} together with
\bea
&& \lim_{z\rightarrow \infty} (\phi_1(z),\phi_2(z),\phi_3(z)) = E_{*}:= (0,v_{*},w_{*}),\label{BCu+}
\eea
or \eqref{BCu} together with
\bea
&& \lim_{z\rightarrow \infty} (\phi_1(z),\phi_2(z),\phi_3(z)) = E^{*}:= (u^{*},0,w^{*}),\label{BCv+}
\eea

In order to study the existence of traveling waves for the non-monotone system~\eqref{main}, we apply a two-fold method based on a construction of appropriate generalized upper-lower solutions, and on a Schauder's fixed point theorem (cf. \cite{M01,WZ01,LLM10, LR14}).
This method has been proved to be very successful, and we refer the reader to \cite{HZ03,LLR06,LLM10,DX12,L14,LR14,CGY17,ZJ17} for 2-component systems as well as \cite{HL14,LWZY15,Z17,BP18,GNOW20} for 3-component systems.
However, the construction of a suitable set of upper and lower solutions depends heavily on the system at issue and therefore it is rather nontrivial, which is one difficulty in applying this method. These generalized upper-lower solutions serve as the upper and lower bounds of the domain for an appropriate integral operator deduced from \eqref{TWS}. Their purpose is to ensure that the integral operator maps this domain into itself, and therefore that a fixed point exists by Schauder's fixed point theorem. Provided that it satisfies the appropriate asymptotic conditions at the tails, this fixed point provides a traveling wave solution of \eqref{main}.

Another difficulty is precisely to check that the wave profile obtained above satisfies the wanted stable tail limit, which requires different approaches depending on the invading state. One of the classical approaches for deriving this limit is the method of contracting rectangles (cf. \cite{HL14,CGY17,GNOW20}). However, it turns out that this method is not directly applicable to our problem. To overcome this difficulty in the case of the waves connecting the semi-co-existence states, we introduce a new idea of dimension reduction (see Section~\ref{sec:stable_stail} below), which is one of the main contributions of this work. In this new method, we only consider, instead of 3-d rectangles, a sequence of shrinking 2-d rectangles. Moreover, we need to derive a priori certain positive lower bounds on $\phi_2$ and $\phi_3$ at the stable tail, since the lower bounds of these two components of our constructed upper-lower solutions are not good enough to apply the method of contracting rec
 tangles.
On the other hand, to obtain the traveling waves connecting the co-existence state at the stable tail limit, another approach is needed and therefore we instead apply a Lyapunov argument.

The rest of this paper is organized as follows. First, our main results are described in Section 2. Next, the existence of solutions to \eqref{TWS} with either \eqref{BCv} or \eqref{BCu} is carried out in Section 3.
 Section 4 is devoted to the derivation of the stable tail limit. Then we deal with the non-existence of waves in Section 5.
Finally, in Section 6, we provide the detailed verification of upper-lower solutions constructed in Section 3.


\section{Main results}
\setcounter{equation}{0}

In this section, we shall present the main results obtained in this paper.
We recall that
\beaa
\b_{*}= 1-kv_{*}-b_1w_{*},\; \b^{*}= 1-hu^{*}-b_2w^{*},
\eeaa
or equivalently
\beaa
\b_{*} = \frac{-b_1(a-1)+b_2(a-k)-(k-1)}{1+ab_2},
\; \b^{*} = \frac{b_1(a-h)-b_2(a-1)+(1-h)}{1+ab_1}.
\eeaa
{We also recall that the sign of $\b_{*}$ (resp. $\b^{*}$) determines the stability of the semi-co-existence states $E_* = (0,v_*,w_*)$ (resp. $E^* = (u^*,0,w^*)$) in the ODE sense. In particular, both states may be admissible as the unstable tail limit of the traveling wave solution. This leads us to introduce
\beaa
s_{*} := 2\sqrt{d_1r_1\b_*},\; s^{*} := 2\sqrt{d_2r_2\b^*},
\eeaa
whenever they are well-defined. These may be understood as the linear invasion speeds into the respective states $E_*$ and $E^*$ whenever they are unstable.
By an analogy with the well-understood Fisher-KPP scalar equation, one may also expect these values to be the infimum wave speeds, and this shall be confirmed by our results.

Our first result deals with the situation when the strong competitor prey is the alien species, and either replaces the weak aboriginal prey, or eventually co-exists with both the other species. With our notation, this means that the state $E^*= (u^*,0,w^*)$ may be invaded by either $E_* = (0,v_*, w_*)$ or $E_c = (u_c, v_c,w_c)$.
\begin{theorem}\label{th:sc1}
Suppose that $\b^*>0$, i.e., \eqref{positive} holds. Assume further that
\bea
&& r_2 \b^* \ge r_1 [ k + b_1 (2a-1)]. \label{vr}
\eea
Then system \eqref{TWS} has a bounded positive solution $(\phi_1,\phi_2,\phi_3)$ satisfying the boundary condition~\eqref{BCv}, for $s>s^*$ provided that
\be\label{vd}
{d_2\geq \max\{ d_1,d_3\} , \quad r_2 \b^* \geq r_3;}
\ee
and for $s=s^*$ provided that
\be\label{vvvd0}
{\frac{d_3}{2}  < d_1 =d_2\leq d_3 , \quad r_2 \left( 2 - \frac{d_3}{d_2}  \right) \beta^* \geq r_3 .}
\ee
Moreover, $(\phi_1, \phi_2, \phi_3)$ satisfies~\eqref{BCu+} if $\b_* < 0$ and
\bea
&& a>\frac{1}{1-h}, \quad b_2<\frac{a(1-h)-1}{a(2a-1)}; \label{hb2}
\eea
while $(\phi_1,\phi_2,\phi_3)$ satisfies \eqref{Bc} if \eqref{co-ex} and \eqref{ODE_lyapu} are enforced.
\end{theorem}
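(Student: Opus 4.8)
The plan is to construct a traveling wave solution of \eqref{TWS} with the unstable tail limit \eqref{BCv} via generalized upper-lower solutions and Schauder's fixed point theorem, and then to identify the stable tail limit depending on the sign conditions on $\b_*$ and on the existence/stability of $E_c$. First I would recast \eqref{TWS} as a fixed-point problem for an integral operator acting on a suitable function space of profiles: for each component one inverts the operator $d_i(\cdot)'' - s(\cdot)' - M_i(\cdot)$ (with $M_i$ large enough to make the reaction term quasi-monotone on the relevant box $[0,u^*+\epsilon]\times[0,1+\epsilon]\times[0,w^*+\epsilon]$), using the exponential Green's function on $\R$. The domain of the operator is the order interval bracketed by generalized upper solutions $\overline{\phi}_i$ and lower solutions $\underline{\phi}_i$, which must have the correct limits at $-\infty$ (namely $(u^*,0,w^*)$) and must be compatible at $+\infty$ with \emph{some} admissible target. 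The detailed verification that these functions are indeed upper-lower solutions is deferred to Section~6; here I would simply state the ansatz (typically exponential tails near each tail, with the decay rates on the $\phi_2$-component governed by the characteristic equation $d_2\lambda^2 - s\lambda + r_2\b^* = 0$, which has real roots exactly when $s\geq s^*$), and explain that conditions \eqref{vr}, \eqref{vd} (for $s>s^*$) or \eqref{vvvd0} (for $s=s^*$) are precisely what is needed for the super/sub-solution inequalities to hold across all three equations — this is where the diffusion comparisons $d_2\geq\max\{d_1,d_3\}$ and the growth-rate comparisons with $r_1,r_3$ enter. Schauder's theorem then yields a fixed point $(\phi_1,\phi_2,\phi_3)$, which is a bounded positive solution of \eqref{TWS}; a standard strong maximum principle / Harnack argument shows each $\phi_i>0$, and the squeeze between upper and lower solutions forces \eqref{BCv} at $-\infty$.

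The second half of the argument is to pin down $\lim_{z\to\infty}(\phi_1,\phi_2,\phi_3)$. I would first argue that the $\omega$-limit set (in the sense of limits along sequences $z_n\to\infty$, using parabolic/elliptic estimates to get convergence of translates $(\phi_i(\cdot+z_n))$ to an entire solution of the ODE system, or rather to a bounded stationary-in-the-moving-frame solution) consists of nonnegative steady states of the ODE system \eqref{main}, i.e. a subset of $\{(0,0,0), E_*, E^*, E_c\}$ compatible with the a priori bounds. Then I split into the three asserted cases. When \eqref{co-ex} and \eqref{ODE_lyapu} hold, $E_c$ is the unique stable positive steady state and $E_*,E^*$ are unstable (this is exactly the special case analyzed in Section~\ref{sec:ODE}), so I would invoke the Lyapunov function associated with \eqref{ODE_lyapu} — the one used in Lemma~\ref{LE-entire-full} — to rule out $E_*$, $E^*$ and $(0,0,0)$ as $\omega$-limits: any entire solution lying in the closure of the trajectory must be a critical point of (or have nonincreasing) the Lyapunov functional, and combined with the a priori positive lower bounds on $\phi_2,\phi_3$ this leaves only $E_c$, giving \eqref{Bc}. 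When instead $\b_*<0$ and \eqref{hb2} hold, $E_*$ is stable, $E_c$ does not exist as a positive state (since $\b_*<0$ forces $\triangle_u<0$; and \eqref{hb2} should guarantee, via the formulas for $\triangle_v,\triangle_w$, that no admissible positive co-existence state competes), so the only candidates besides $E^*$ itself are $E_*$ and $(0,0,0)$; one then rules out $(0,0,0)$ and $E^*$ — the latter because the wave is nontrivially distinct from the invaded state by construction (the $\phi_2$-lower solution is eventually positive and bounded below), forcing \eqref{BCu+}.

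The main obstacle, as the introduction itself flags, is the stable-tail analysis: the standard contracting-rectangles method does not apply directly because the lower solutions for the $\phi_2$- and $\phi_3$-components are not good enough (they may degenerate), so one cannot simply trap the profile in a shrinking $3$-d box around the target. The workaround I would follow is the ``dimension reduction'' described in Section~\ref{sec:stable_stail}: first establish, by a separate argument (e.g. a sliding/sweeping comparison with sub-solutions built from the unstable tail, or a spreading-speed lower bound à la Fisher-KPP applied to the $v$- and $w$-equations once $u$ is known to stay near $u^*$ far on the left), uniform positive lower bounds $\liminf_{z\to\infty}\phi_2(z)>0$ and $\liminf_{z\to\infty}\phi_3(z)>0$; and then run a contracting-rectangles / monotone-iteration argument only in the two variables that actually need to be squeezed, using those a priori positive bounds to keep the reduced system cooperative and to close the rectangles. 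Verifying condition \eqref{hb2} really does preclude any spurious positive co-existence limit in the replacement case, and checking that the reduced $2$-d contracting rectangles exist under hypotheses \eqref{vr}–\eqref{vvvd0}, are the two places where genuine new computation (rather than routine bookkeeping) is required; everything else — the Green's function inversion, the Schauder setup, the parabolic estimates for taking limits of translates — is standard and can be cited.
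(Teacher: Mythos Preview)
Your overall architecture matches the paper: upper-lower solutions plus Schauder for existence (with the exponential ansatz and the characteristic equation $d_2\lambda^2-s\lambda+r_2\beta^*=0$), and then a separate stable-tail analysis using $2$-d contracting rectangles in the $\beta_*<0$ case and a Lyapunov argument in the co-existence case. That part is fine.

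The genuine gap is in how you obtain the positive lower bounds $\phi_2^->0$ and $\phi_3^->0$ that feed into both the contracting-rectangles and the Lyapunov arguments. Your claim that ``the $\phi_2$-lower solution is eventually positive and bounded below'' is false: in the constructions of Section~\ref{vv} one has $\underline{\phi}_2(z)=0$ for all $z>z_2$ (and likewise $\underline{\phi}_3(z)=0$ for $z>0$). So neither the squeeze by upper-lower solutions nor a naive ``sliding/Fisher-KPP spreading'' argument gives anything at $+\infty$; in particular you cannot say that $u$ stays near $u^*$ far on the right to decouple the $v$-equation, since in the $E_*$ case $\phi_1\to 0$. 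The paper's route is quite different: it rewrites \eqref{TWS} as a first-order $6$-dimensional ODE, computes the dimension of the stable manifolds of $(0,0,0)$, $(1,0,0)$, $(0,1,0)$, $E^*$, $E_*$ and shows each is contained in an appropriate coordinate hyperplane (Lemmas~\ref{ODE_lemma1_ter0}--\ref{ODE_lemma1_ter1}). Then a sequential persistence argument (extract translates, pass to an entire limit, show it lies in one of those stable manifolds, contradict positivity) yields successively $\liminf[\phi_1+\phi_2]>0$, $\liminf[\phi_2+\phi_3]>0$, $\phi_3^->0$, and finally $\phi_2^->0$ using $\beta^*>0$ (Lemmas~\ref{lem:liminf1_ter12}--\ref{lem:liminf1_ter2}). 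Without this machinery you have no way to exclude $E^*$, $(0,0,0)$, or the single-species states as stable-tail limits.

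You also misidentify the role of \eqref{hb2}. It is not there to ``preclude a spurious positive co-existence limit'' (that is already handled by $\beta_*<0$, which forces $\triangle_u<0$). Rather, \eqref{hb2} is exactly the condition $\gamma_2:=1-h-b_2(2a-1)>0$ \emph{and} $a\gamma_2-1>0$: the first gives the explicit lower bound $\phi_2^-\geq\gamma_2$ (Lemma~\ref{p2-lower}), and the second is needed so that the lower corner $m_3(\theta)$ of the $2$-d rectangle is positive and the signs of $\alpha_2,\alpha_3,\omega_2,\omega_3$ come out right. This is the ``genuine new computation'' you allude to, but located in a different place than you expect.
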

Some of the conditions in Theorem~\ref{th:sc1} appear to be mostly technical. Setting aside such assumptions, Theorem~\ref{th:sc1} roughly states that, when the semi-co-existence state $E^*$ is unstable, then there exists a traveling wave for any speed larger than $s^*$ connecting $E^*$ to a stable tail limit, which must also be a stable state of the ODE system. In particular, depending on the sign of $\b_*$, the stable tail limit shall be either $E_*$ or $E_c$.
\begin{remark}\label{rem:th1}
Let us point out that all the situations in Theorem~\ref{th:sc1} can be encountered in some parameter ranges. Indeed, notice first that \eqref{vr}, \eqref{vd} and \eqref{vvvd0} are the only conditions on the intrinsic growth rates and the diffusivities. These are clearly achievable and we focus on the choice of coupling parameters $a$, $h$, $k$, $b_1$ and $b_2$.

Consider first the case of a traveling wave satisfying \eqref{BCv} and \eqref{BCu+}, i.e., connecting the two semi-co-existence states.
All conditions $\b^* >0$, $\b_* < 0$ and the second inequality in~\eqref{hb2} rewrite as upper bounds on $b_2$. The only remaining condition is $a > \frac{1}{1-h}$, which raises no compatibility issue.
Therefore, a traveling wave connecting these two semi-co-existence states clearly exists in some parameter range, typically when $b_2$ is small.

The other case, i.e., of a traveling wave connecting $E^*$ to $E_c$, is a bit more complicated, because the corresponding assumptions involve both upper and lower bounds on $b_2$. First, one may choose $a$, $h$ and $k$ so that $hk < 1$, $h+k < 2$ and $a > \frac{1-hk}{2 -h -k}$ hold. It follows that $\triangle_w >0$, and \eqref{co-ex} rewrites as
$$\triangle >0, \; \triangle_u  >0  , \; \triangle_v >0 .$$
The positivity of $\triangle_v$ also implies that $\beta^* >0$. On the other hand, the assumption $ k \sqrt{\frac{b_2}{b_1}} + h \sqrt{\frac{b_1}{b_2}}< 2$ rewrites as
$$\frac{2 - hk - 2 \sqrt{1 -hk}}{k^2} b_1 < b_2 < \frac{2 - hk + 2 \sqrt{1-hk}}{k^2} b_1 .$$
From the definitions of $\triangle$, $\triangle_u$ and $\triangle_v$ in Subsection~\ref{sec:ODE}, one can find $b_2$ such that a traveling wave connecting $E^*$ to $E_c$ exists if
\beaa
 && \max \left\{\frac{a-1}{a-k}b_1+\frac{k-1}{a-k},  \frac{2 - hk - 2 \sqrt{1 -hk}}{k^2} b_1 \right\} \\
&< &  \min \left\{ \frac{1-h}{k-1} b_1 + \frac{1 -hk}{a (k-1)} ,  \frac{a-h}{a-1}b_1+\frac{1-h}{a-1},  \frac{2 - hk + 2 \sqrt{1-hk}}{k^2} b_1 \right\}.
\eeaa
This is achievable, for instance, if $k$ is close to 1.
%
%
%
\end{remark}

Now we turn to the more surprising case when the strong competitor is the aboriginal prey, i.e., the unstable tail limit of the traveling wave is the semi-co-existence state $E_*$.
It turns out that, due to the predation, it is possible that the weak alien prey invades the environment with positive speed.
\begin{theorem}\label{th:cs2}
Suppose that $\b_*>0$, i.e., \eqref{positive2} holds. Assume further that
\bea
&&  r_1\b_{*}\geq r_2[h+b_2(2a-1)]. \label{uur} 
\eea
Then system \eqref{TWS} has a bounded positive solution $(\phi_1,\phi_2,\phi_3)$ satisfying the boundary condition \eqref{BCu}, for $s>s_*$ provided that
\be\label{uud}
d_1\geq \max\{ d_2,d_3\}, \quad r_1 \beta_* \geq r_3;
\ee
and for $s=s_*$ provided that
\be\label{uuud0}
\frac{d_3}{2}  < d_1 =d_2\leq d_3 , \quad r_1 \left( 2 - \frac{d_3}{d_1}  \right) \beta^* \geq r_3.
\ee
Moreover, we have that
$$\liminf_{z\to +\infty} \phi_i(z) >0\; \mbox{ for } i = 1,3.$$
Furthermore, if \eqref{co-ex} and \eqref{ODE_lyapu} are enforced, then $(\phi_1,\phi_2,\phi_3)$ satisfies \eqref{Bc}.
\end{theorem}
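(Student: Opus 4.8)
The plan is to mirror the proof of Theorem~\ref{th:sc1}, now with the semi-co-existence state $E_*=(0,v_*,w_*)$ in the role of the unstable tail and $\phi_1$ as the invading component. One is tempted to relabel $u\leftrightarrow v$ and simply quote Theorem~\ref{th:sc1}; but that relabeling turns the competition pair $(h,k)$ into $(k,h)$ and so destroys the standing hypothesis $h<1<k$, so the construction must be carried out afresh, even though its structure is identical. Linearizing the first line of \eqref{TWS} at $E_*$ gives $d_1\phi_1''-s\phi_1'+r_1\beta_*\phi_1=0$, whose characteristic exponents $\lambda_\pm(s)=(s\pm\sqrt{s^2-4d_1r_1\beta_*})/(2d_1)$ are real and positive exactly for $s\ge s_*=2\sqrt{d_1r_1\beta_*}$, with a double root at $s=s_*$; this dictates both the admissible range of speeds and the shape of the sub/supersolutions for $\phi_1$, which at $s=s_*$ must carry an extra polynomial factor.

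First I would build a set of generalized upper and lower solutions $(\bar\phi_i),(\underline\phi_i)$ for \eqref{TWS} adapted to \eqref{BCu}: for $s>s_*$, take $\bar\phi_1=\min\{\bar{M}_1,Ce^{\lambda_-(s)z}\}$ and $\underline\phi_1=\max\{0,ce^{\lambda_-(s)z}(1-Me^{\eta z})\}$ with $\eta>0$ small, replacing $e^{\lambda z}$ by $(A-z)e^{\lambda z}$ with $\lambda=s_*/(2d_1)$ at the critical speed; take $\bar\phi_2,\bar\phi_3$ slightly above the values $v_*,w_*$ (and below the relevant carrying capacities) and $\underline\phi_2,\underline\phi_3$ slightly below $v_*,w_*$ with a cutoff toward $-\infty$. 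Verifying the defining differential inequalities is a long but routine computation, and this is precisely where the hypotheses enter: \eqref{uur} balances the decay rate $\lambda_-(s)$ against the coupling coefficients so that the $\phi_2$- and $\phi_3$-inequalities hold near $z=-\infty$ (the couplings being $h\phi_1$ in the $\phi_2$-equation and $a\phi_1$ in the $\phi_3$-equation), while \eqref{uud} for $s>s_*$ and \eqref{uuud0} for $s=s_*$ (where the polynomial factor forces the equal-diffusivity and growth-rate relations listed there) make the inequalities for $\bar\phi_3$ and the $\underline\phi_i$ consistent. With the upper-lower solutions in hand I would run the classical integral-operator and Schauder fixed-point scheme recalled in the introduction: rewrite each line of \eqref{TWS} as a fixed-point equation $\phi_i=\mathcal{T}_i[\phi]$ via the resolvent of $d_i\partial_z^2-s\partial_z-\gamma_i$ with $\gamma_i$ large, check that $\mathcal{T}=(\mathcal{T}_1,\mathcal{T}_2,\mathcal{T}_3)$ maps the order interval $[\underline\phi,\bar\phi]$ in a suitable weighted space of bounded continuous functions into itself and is compact, and extract a classical solution $(\phi_1,\phi_2,\phi_3)$ with $\underline\phi\le\phi\le\bar\phi$. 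The squeeze then yields positivity, boundedness, the limit \eqref{BCu} at $-\infty$, and the precise decay $\phi_1(z)\asymp e^{\lambda_-(s)z}$ (or $\asymp|z|e^{\lambda z}$ at $s=s_*$) as $z\to-\infty$.

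The crux is the stable tail, i.e.\ $\liminf_{z\to+\infty}\phi_i(z)>0$ for $i=1,3$. The constructed lower solutions are of no help here ($\underline\phi_1$ and $\underline\phi_3$ decay as $z\to+\infty$), and the absence of quasimonotonicity in \eqref{TWS} blocks the classical contracting-rectangles method, so one must argue a priori. Using that $\phi_1$ is a bounded positive solution of a linear equation $d_1\phi_1''-s\phi_1'+r_1g(z)\phi_1=0$ with $g$ bounded, that $\phi_1$ has the precise exponential decay at $-\infty$ recorded above, and a compactness argument on the translates $\phi(\cdot+z_n)$ with $z_n\to+\infty$ --- whose limits satisfy limiting copies of \eqref{TWS}, so that the strong maximum principle forces the limiting $\phi_1$-component to be identically zero or bounded below, and the origin of the profile at $E_*$ rules out the former --- one obtains a positive lower bound for $\phi_1$ on a half-line, and then, via the predator equation (the underlying point being $av_*>1$, which keeps the predator's net growth positive along the region at issue), a positive lower bound for $\phi_3$. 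Where the lower solutions remain too weak to set up genuine three-dimensional contracting rectangles, one falls back on the dimension-reduction device of Section~\ref{sec:stable_stail}: instead of nested $3$-d boxes one tracks a shrinking sequence of $2$-d rectangles in the $(\phi_1,\phi_3)$-plane whose placement is slaved to the current bounds on $\phi_2$ (which may legitimately be driven to $0$ from below), and bootstraps. This block --- producing the a priori positive lower bounds and closing the reduced two-dimensional iteration --- is the step I expect to be the real obstacle; everything else is a careful but essentially standard adaptation.

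Finally, assume \eqref{co-ex} and \eqref{ODE_lyapu}. By the ODE discussion in Subsection~\ref{sec:ODE}, in this regime $E_c$ is stable, $E_*$ and $E^*$ are unstable, and a Lyapunov function --- finite precisely on the interior of the positive octant --- is available (Lemma~\ref{LE-entire-full}). First I would strengthen the previous step by showing, in this regime, that also $\liminf_{z\to+\infty}\phi_2>0$: otherwise, translating along a sequence of near-zeros of $\phi_2$ yields in the limit a profile with $\phi_2\equiv0$, hence a bounded entire solution of the $u$-$w$ predator-prey subsystem with both remaining components bounded below; that subsystem carries its own Lyapunov function, forcing the limit to be $(u^*,0,w^*)=E^*$, which contradicts $\beta^*>0$ (valid under \eqref{co-ex}) since a positive $\phi_2$ cannot collapse near a state at which its linearized growth rate is positive. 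With all three components of $\phi$ bounded below by positive constants on a half-line, the $\omega$-limit of the wave as $z\to+\infty$ is a bounded entire-in-time solution of \eqref{main} staying inside the octant; the Lyapunov function together with a LaSalle argument then forces it to be a single equilibrium, which by the positive lower bounds can only be $E_c$. Since this holds along every sequence $z_n\to+\infty$, \eqref{Bc} follows.
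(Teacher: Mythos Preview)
Your existence scheme (upper--lower solutions tailored to $E_*$, Schauder fixed point) and your co-existence conclusion (Lyapunov via Lemma~\ref{LE-entire-full} once all three components are bounded below) match the paper. The gap is in the ``crux'' you correctly flag: establishing $\liminf_{z\to+\infty}\phi_i>0$ for $i=1,3$.

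Your argument for $\phi_1^->0$ does not work as written. Translating along $z_n\to+\infty$ and finding that the limit has $\phi_1\equiv 0$ does \emph{not} contradict ``the origin of the profile at $E_*$'': that information lives at $-\infty$ and is lost under the shift. The paper's mechanism is different and requires more scaffolding. One first proves, by a two-sequence persistence argument (pick $z_n'<z_n$ as the last point where the relevant quantity equals $\varepsilon$, so the limit profile has value $\varepsilon$ at $0$ but is $\le\varepsilon$ on $[0,\infty)$) combined with an analysis of the stable manifolds of the boundary equilibria of the first-order ODE system (Lemmas~\ref{ODE_lemma1_ter0}--\ref{ODE_lemma1_ter1}), a chain of preliminary bounds: $\phi_1+\phi_2$, then $\phi_2+\phi_3$, then $\phi_3$ alone stay bounded below (Lemmas~\ref{lem:liminf1_ter12}--\ref{lem:liminf1_ter3}). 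Only \emph{after} $\phi_3^->0$ is secured does one prove $\phi_1^->0$ by the same scheme: on the limit profile the inherited lower bounds on $\hat\phi_2^\varepsilon,\hat\phi_3^\varepsilon$ together with the two-species Lyapunov (Lemma~\ref{prelim_predatorprey}) force $(\hat\phi_2^\varepsilon,\hat\phi_3^\varepsilon)$ near $(v_*,w_*)$ for large $z$; since $\beta_*>0$, the stable manifold of $E_*$ lies in $\{\phi_1=0\}$ (Lemma~\ref{ODE_lemma1_ter1}), giving $\hat\phi_1^\varepsilon\equiv 0$, which contradicts $\hat\phi_1^\varepsilon(0)=\varepsilon$. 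Your sketch reverses the order ($\phi_1$ before $\phi_3$), omits the sum-lemmas, and replaces the stable-manifold step by a hand-wave; none of the steps go through without these ingredients.

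The $2$-d contracting-rectangle device is also misplaced here. In the paper it is used only for Theorem~\ref{th:sc1} (semi-co-existence case, rectangles in the $(\phi_2,\phi_3)$-plane) to identify the \emph{limit} $E_*$ once the positive lower bounds are already in hand; it does not itself produce lower bounds. For Theorem~\ref{th:cs2} the paper proves only $\phi_1^-,\phi_3^->0$ and explicitly leaves convergence to $E^*$ (when $E_c$ is unstable) open, so there is no contracting-rectangle step in this proof at all.
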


Theorem~\ref{th:cs2} tells us that a weak intruding prey can invade an environment inhabited by the strong competing prey, thanks to the effect of predation.
In the co-existence case, the three even species converge together to a positive equilibrium.
In the case when the co-existence state $E_c$ is unstable, we expect that the weak prey completely replaces the strong one, i.e., that $(\phi_1 , \phi_2 , \phi_3)$ satisfies \eqref{BCv+}.
Unfortunately we have not been able to prove this rigorously and we leave it as an open issue for future work.
\begin{remark}\label{rem:th2}
Let us again point out that the assumptions in Theorem~\ref{th:cs2} can indeed be satisfied. The argument is the same as in Remark~\ref{rem:th1}.
\end{remark}

\begin{remark}
We note from Theorem~\ref{th:sc1} that traveling waves exist for all speeds $s\ge s^*$ only when both conditions \eqref{vd} and \eqref{vvvd0} hold.
In particular, three species must diffuse at the same speed, i.e., $d_1=d_2=d_3$. The same limitation holds for the traveling waves obtained in Theorem~\ref{th:cs2}.
\end{remark}

Our last main result shows that the wave speeds $s^*$ and $s_*$ exhibited above are truly the infimum wave speeds of traveling wave solutions. 
More precisely:
\begin{theorem}\label{th:non}
The following statements hold:
\begin{enumerate}
\item
Assume that $\b^* >0$, hence $s^* >0$. Then no positive solutions of \eqref{TWS}, \eqref{BCv} and either \eqref{Bc} or \eqref{BCu+} exist for $s<s^*$.
\item
Assume that $\b_* >0$, hence $s_* >0$. Then no positive solutions of \eqref{TWS}, \eqref{BCu} and either \eqref{Bc} or \eqref{BCv+} exist for $s<s_*$.
\end{enumerate}
\end{theorem}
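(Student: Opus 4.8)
The plan is to prove both statements by the same mechanism: linearise the equation governing the invading prey at the unstable tail, and show that positivity of the profile forces the speed to be at least the corresponding Fisher--KPP value. I describe case~(1) in detail; case~(2) will be obtained by exchanging the roles of $\phi_1$ and $\phi_2$, of $E_*$ and $E^*$, and of $\beta_*$ and $\beta^*$. So suppose, for contradiction, that $(\phi_1,\phi_2,\phi_3)$ is a positive solution of \eqref{TWS} and \eqref{BCv} with speed $s$, $0\le s<s^*$. Rewrite the second equation of \eqref{TWS} as
\[
d_2\phi_2'' - s\phi_2' + c(z)\phi_2 = 0, \qquad c(z):=r_2\bigl(1-h\phi_1(z)-\phi_2(z)-b_2\phi_3(z)\bigr).
\]
By \eqref{BCv} we have $\phi_1(z)\to u^*$, $\phi_2(z)\to 0$ and $\phi_3(z)\to w^*$ as $z\to-\infty$, so $c(z)\to r_2\beta^*>0$. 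Fix $\ve\in(0,\beta^*)$ with $s^2<4d_2r_2(\beta^*-\ve)$, which is possible because $s^2<(s^*)^2=4d_2r_2\beta^*$; put $c_0:=r_2(\beta^*-\ve)>0$ and choose $M>0$ so large that $c(z)\ge c_0$ for every $z\le -M$.

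The next step is a sliding comparison of $\phi_2$ with an oscillating solution of the frozen linear operator. Let $\mu:=s/(2d_2)$ and $\omega:=(2d_2)^{-1}\sqrt{4d_2c_0-s^2}>0$, and set $\chi(z):=e^{\mu z}\sin\!\bigl(\omega(z-z_1)\bigr)$, which solves $d_2\chi''-s\chi'+c_0\chi=0$, is positive on the interior of $I:=[z_1,z_1+\pi/\omega]$, and vanishes at its two endpoints; take $z_1\le -M-\pi/\omega$, so that $I\subset(-\infty,-M]$. Since $\phi_2>0$ on $\overline I$ while $\chi\to 0^+$ at the endpoints of $I$, the ratio $\phi_2/\chi$ tends to $+\infty$ at both ends, hence $\kappa:=\min_{z\in\mathrm{int}(I)}\phi_2(z)/\chi(z)$ is a positive number attained at some interior point $z_2$. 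Then $w:=\phi_2-\kappa\chi$ satisfies $w\ge 0$ on $I$ and $w(z_2)=0$, so $z_2$ is an interior minimum of $w$ and $w'(z_2)=0$. Subtracting the equations satisfied by $\phi_2$ and by $\kappa\chi$ and regrouping gives
\[
d_2w''-sw'+c_0w=-\bigl(c(z)-c_0\bigr)\phi_2\le 0 \qquad\text{on }I,
\]
and since $w\ge 0$ and $c_0>0$ this yields $d_2w''-sw'\le 0$, i.e. $\bigl(e^{-sz/d_2}w'\bigr)'\le 0$ on $I$. Thus $e^{-sz/d_2}w'$ is non-increasing, and as it vanishes at $z_2$ we get $w'\ge 0$ on $[z_1,z_2]$; together with $w\ge 0$ and $w(z_2)=0$ this forces $w\equiv 0$ on $[z_1,z_2]$, hence $\phi_2\equiv\kappa\chi$ there, which is absurd because $\phi_2(z_1)>0=\chi(z_1)$. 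This contradiction shows $s\ge s^*$, proving~(1).

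Statement~(2) follows verbatim from the first equation of \eqref{TWS}, with \eqref{BCu} giving $c(z):=r_1\bigl(1-\phi_1(z)-k\phi_2(z)-b_1\phi_3(z)\bigr)\to r_1\beta_*>0$ and the threshold $s_*=2\sqrt{d_1r_1\beta_*}$. Observe that only the limit at the unstable tail $z\to-\infty$ enters the argument, so the stable-tail hypothesis---whether \eqref{Bc}, \eqref{BCu+} or \eqref{BCv+}---is irrelevant, and the same proof covers every case listed in the theorem. I do not expect a serious obstacle here; the two points that require a little care are (a) deducing from the convergence in \eqref{BCv} (resp.~\eqref{BCu}) a genuine positive constant lower bound $c_0$ for the linearised coefficient on a left half-line, and (b) running the touching/sliding argument cleanly on the interval $I$. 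As an alternative to (b), one could instead establish, via a Harnack-type estimate and ODE asymptotics, that $\phi_2$ (resp.~$\phi_1$) decays exponentially as $z\to-\infty$, and then note that the characteristic equation $d_2\lambda^2-s\lambda+r_2\beta^*=0$ has no positive real root when $s<s^*$, contradicting positivity of $\phi_2$.
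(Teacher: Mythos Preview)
Your argument is correct and takes a genuinely different route from the paper. The paper first splits off $s\le 0$ by a double integration of the $\phi_2$-equation over $(-\infty,z]$ to produce a sign contradiction; for $0<s<s^*$ it uses the \emph{stable-tail} hypothesis $\liminf_{z\to+\infty}\phi_2>0$ to manufacture global inequalities $\phi_1-c_1\phi_2<u^*+\ve$ and $\phi_3-c_2\phi_2<w^*+\ve$ on all of~$\R$, derives from these a KPP-type differential inequality for $v(x,t)=\phi_2(x+st)$, and then invokes Aronson--Weinberger spreading theory to force $s\ge s^*$. Your oscillating-test-function sliding argument is more elementary (no spreading result needed), entirely local at the unstable tail, and---as you rightly observe---makes no use of the behaviour at $+\infty$, so it actually proves a slightly stronger statement than the paper's version, which still requires $\liminf_{z\to+\infty}\phi_2>0$. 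One small point: you write $0\le s<s^*$, but your construction only uses $s^2<4d_2c_0$, so the sliding step in fact covers all $|s|<s^*$; the residual range $s\le -s^*$, where the characteristic roots are real and both negative, is not handled by the oscillation and needs either the exponential-asymptotics alternative you sketch at the end or the paper's integration trick for $s\le 0$.
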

As we shall observe in Section~\ref{sec:non}, in the above non-existence theorem the stable tail limits can actually be replaced by the positivity of the infimum limit at $\infty$ of the alien species component. In particular, either prey invading the environment must do so at least at the corresponding speed $s^*$ or $s_*$.


\section{Existence of solutions to \eqref{TWS}}\label{existence}
\setcounter{equation}{0}

First, we give the definition of generalized upper-lower solutions of \eqref{TWS} as follows.

\begin{definition}\label{uplodfn}
Nonnegative and continuous functions $(\overline{\phi}_1,\overline{\phi}_2,\overline{\phi}_3)$ and $(\underline{\phi}_1,\underline{\phi}_2,\underline{\phi}_3)$ are called a pair of generalized upper and lower solutions
 of \eqref{TWS} if ~$\overline{\phi}_i'',~\underline{\phi}_i'',~\overline{\phi}_i',~\underline{\phi}_i',~i=1,2,3$, are bounded functions and satisfy the following inequalities
\bea
&&\mathcal{U}_1(z):=d_1\overline{\phi}_1''(z) -s\overline{\phi}_1'(z)+r_1\overline{\phi}_1(z)[1-\overline{\phi}_1(z)-k\underline{\phi}_2(z)-b_1\underline{\phi}_3(z)] \leq 0,\label{u1}\\
&&\mathcal{U}_2(z):=d_2\overline{\phi}_2''(z) -s\overline{\phi}_2'(z)+r_2\overline{\phi}_2(z)[1-h\underline{\phi}_1(z)-\overline{\phi}_2(z)-b_2\underline{\phi}_3(z)] \leq 0,\label{u2}\\
&&\mathcal{U}_3(z):=d_3\overline{\phi}_3''(z) -s\overline{\phi}_3'(z)+r_3\overline{\phi}_3(z)[-1+a\overline{\phi}_1(z)+a\overline{\phi}_2(z)-\overline{\phi}_3(z)]    \leq 0,\label{u3}\\
&&\mathcal{L}_1(z):=d_1\underline{\phi}_1''(z) -s\underline{\phi}_1'(z)+r_1\underline{\phi}_1(z)[1-\underline{\phi}_1(z)-k\overline{\phi}_2(z)-b_1\overline{\phi}_3(z)] \geq 0,\label{l1}\\
&&\mathcal{L}_2(z):=d_2\underline{\phi}_2''(z) -s\underline{\phi}_2'(z)+r_1\underline{\phi}_2(z)[1-h\overline{\phi}_1(z)-\underline{\phi}_2(z)-b_2\overline{\phi}_3(z)] \geq 0,\label{l2}\\
&&\mathcal{L}_3(z):=d_3\underline{\phi}_3''(z) -s\underline{\phi}_3'(z)+r_3\underline{\phi}_3(z)[-1+a\underline{\phi}_1(z)+a\underline{\phi}_2(z)-\underline{\phi}_3(z)]\geq 0,\label{l3}
\eea
for $z\in\mathbb{R}\backslash E$ with some finite set $E = \{ z_1,z_2,\ldots,z_m\}$.
\end{definition}


With this notion of generalized upper-lower solutions, we have the following existence theorem for system \eqref{TWS}.

\begin{proposition}\label{exist}
Given $s>0$. Suppose that system \eqref{TWS} has a pair of generalized upper-lower solutions $(\overline{\phi}_1,\overline{\phi}_2,\overline{\phi}_3)$ and
$(\underline{\phi}_1,\underline{\phi}_2,\underline{\phi}_3)$ such that
\bea
&&\underline{\phi}_i(z)\le\overline{\phi}_i(z),\;\forall\,z\in\R,\; i=1,2,3,\label{order}\\
&&\lim_{z\to z_j^+}\overline{\phi}_i'(z)\le\lim_{z\to z_j^-}\overline{\phi}_i'(z),\; \lim_{z\to z_j^-}\underline{\phi}_i'(z)\le\lim_{z\to z_j^+}\underline{\phi}_i'(z),\;\forall\,z_j\in E,\; i=1,2,3.\label{lrd}
\eea
Then system \eqref{TWS} has a solution $(\phi_1,\phi_2,\phi_3)$ such that $\underline{\phi}_i\le\phi_i\le\overline{\phi}_i$, $i=1,2,3$.
\end{proposition}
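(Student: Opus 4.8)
The plan is to follow the by-now classical scheme combining the method of upper–lower solutions with Schauder's fixed point theorem, adapted to the \emph{mixed quasimonotone} structure of \eqref{TWS}: the two preys compete with one another and are both consumed by the predator, while the predator benefits from both preys. First I would put the system in monotone form. Fixing $M_i\ge\sup_z\overline{\phi}_i(z)$ (finite, since the upper solutions in our constructions are bounded), I choose constants $\beta_i>0$ large enough that
\[
F_1(\phi):=\beta_1\phi_1+r_1\phi_1(1-\phi_1-k\phi_2-b_1\phi_3),\qquad F_2(\phi):=\beta_2\phi_2+r_2\phi_2(1-h\phi_1-\phi_2-b_2\phi_3),
\]
\[
F_3(\phi):=\beta_3\phi_3+r_3\phi_3(-1+a\phi_1+a\phi_2-\phi_3)
\]
are nondecreasing in $\phi_i$ on $\prod_j[0,M_j]$ (e.g. $\beta_1>r_1(1+2M_1+kM_2+b_1M_3)$, and similarly for $\beta_2,\beta_3$). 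By the sign pattern of the interactions, $F_1$ is then also nonincreasing in $\phi_2,\phi_3$; $F_2$ is nonincreasing in $\phi_1,\phi_3$; and $F_3$ is moreover nondecreasing in $\phi_1,\phi_2$. For each $i$ let $\lambda_i^-<0<\lambda_i^+$ be the roots of $d_i\mu^2-s\mu-\beta_i=0$, set $\rho_i:=d_i(\lambda_i^+-\lambda_i^-)$, and define, for $\Phi=(\phi_1,\phi_2,\phi_3)$ continuous with $0\le\phi_i\le M_i$,
\[
(\mathcal{T}_i\Phi)(z):=\frac{1}{\rho_i}\left[\int_{-\infty}^{z}e^{\lambda_i^-(z-y)}F_i(\Phi(y))\,dy+\int_{z}^{+\infty}e^{\lambda_i^+(z-y)}F_i(\Phi(y))\,dy\right],\qquad\mathcal{T}\Phi:=(\mathcal{T}_1\Phi,\mathcal{T}_2\Phi,\mathcal{T}_3\Phi).
\]
A fixed point of $\mathcal{T}$ is, after differentiating the formula, precisely a bounded solution of \eqref{TWS}. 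I would work in $B_\mu:=\{\Phi\in C(\mathbb{R},\mathbb{R}^3):\sup_z e^{-\mu|z|}|\Phi(z)|<\infty\}$ with its natural weighted norm, $\mu>0$ small, and set $\Gamma:=\{\Phi\in B_\mu:\underline{\phi}_i\le\phi_i\le\overline{\phi}_i\text{ on }\mathbb{R},\ i=1,2,3\}$, which by \eqref{order} is nonempty and is clearly closed, bounded and convex.

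The heart of the argument is to show $\mathcal{T}(\Gamma)\subseteq\Gamma$. Fix $\Phi\in\Gamma$ and consider the first component. The monotonicities of $F_1$ together with $\underline{\phi}_1\le\phi_1\le\overline{\phi}_1$, $\underline{\phi}_2\le\phi_2$, $\underline{\phi}_3\le\phi_3$ give the pointwise bound $F_1(\phi_1,\phi_2,\phi_3)\le F_1(\overline{\phi}_1,\underline{\phi}_2,\underline{\phi}_3)$, hence $\mathcal{T}_1\Phi\le\mathcal{T}_1(\overline{\phi}_1,\underline{\phi}_2,\underline{\phi}_3)$. Next I rewrite the supersolution inequality \eqref{u1} as $d_1\overline{\phi}_1''-s\overline{\phi}_1'-\beta_1\overline{\phi}_1+F_1(\overline{\phi}_1,\underline{\phi}_2,\underline{\phi}_3)\le0$ on $\mathbb{R}\setminus E$; multiplying by the kernels $e^{\lambda_1^-(z-y)}$ and $e^{\lambda_1^+(z-y)}$ and integrating by parts on each interval between consecutive points of $E$, the interior boundary terms combine with the one-sided derivative inequalities \eqref{lrd} to have the favorable sign, while the contributions at $\pm\infty$ vanish because $\overline{\phi}_1,\overline{\phi}_1'$ are bounded and $\lambda_1^\pm$ have the appropriate signs; this yields $\mathcal{T}_1(\overline{\phi}_1,\underline{\phi}_2,\underline{\phi}_3)\le\overline{\phi}_1$, so $\mathcal{T}_1\Phi\le\overline{\phi}_1$. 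The lower bound $\mathcal{T}_1\Phi\ge\underline{\phi}_1$ is symmetric, starting from $F_1(\phi)\ge F_1(\underline{\phi}_1,\overline{\phi}_2,\overline{\phi}_3)$ and the subsolution inequality \eqref{l1}. The second component is handled identically after swapping the roles of $\phi_1$ and $\phi_2$; for the third component one uses that $F_3$ is nondecreasing in all three arguments, so $F_3(\phi)\le F_3(\overline{\phi}_1,\overline{\phi}_2,\overline{\phi}_3)$ (resp. $\ge F_3(\underline{\phi}_1,\underline{\phi}_2,\underline{\phi}_3)$), combined with \eqref{u3} (resp. \eqref{l3}).

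It then remains to verify that $\mathcal{T}:\Gamma\to\Gamma$ is continuous and compact and to invoke Schauder. Continuity in $B_\mu$ follows from the Lipschitz continuity of the $F_i$ on the bounded set $\prod_j[0,M_j]$ and dominated convergence in the kernels. Differentiating the integral formula shows $(\mathcal{T}\Phi)'$ is uniformly bounded on $\mathbb{R}$ over $\Phi\in\Gamma$, so $\mathcal{T}(\Gamma)$ is equicontinuous and pointwise bounded; together with the exponential weight controlling the behavior at $\pm\infty$, an Arzelà–Ascoli/diagonal argument gives precompactness of $\mathcal{T}(\Gamma)$ in $B_\mu$. Schauder's fixed point theorem then produces $\Phi=\mathcal{T}\Phi\in\Gamma$. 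Since $F_i(\Phi)$ is bounded and continuous, the integral representation gives $\Phi\in C^1$, then $C^2$ by a standard bootstrap, and differentiating the formula twice shows $\Phi$ solves \eqref{TWS}, with $\underline{\phi}_i\le\phi_i\le\overline{\phi}_i$ by construction.

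I expect the main obstacle to be the integration-by-parts step within $\mathcal{T}(\Gamma)\subseteq\Gamma$: one must verify precisely that a function $\psi$ satisfying $d_i\psi''-s\psi'-\beta_i\psi+g\le0$ (resp. $\ge0$) on $\mathbb{R}\setminus E$ together with the one-sided derivative jump conditions \eqref{lrd} dominates (resp. is dominated by) the $\mathcal{T}_i$-image of $g$, carefully tracking the finitely many boundary contributions from the points of $E$ and checking they cannot spoil the inequality. A secondary point of care is threading the \emph{mixed} (rather than fully cooperative) monotone structure through the estimates, i.e. inserting, for each component of $\mathcal{T}$, the correct one of $\overline{\phi}_j,\underline{\phi}_j$ into the off-diagonal slots of $F_i$.
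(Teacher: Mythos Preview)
Your proposal is correct and follows precisely the standard Schauder fixed point argument in a weighted space with the mixed quasimonotone integral operator, which is exactly the approach the paper alludes to (the paper omits the proof entirely, citing \cite{M01,WZ01,HL14} for the standard argument). The only remark is that the paper does not spell out any of the details you provide, so your write-up is in fact more complete than what appears in the manuscript.
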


The proof of Proposition~\ref{exist} can be done by a standard argument as that in, e.g., \cite{M01,WZ01,HL14}), and thus we omit it here.

\subsection{Upper-lower solutions for waves invading $E^* = (u^*,0,w^*)$}\label{vv}
In this subsection we shall construct generalized upper-lower solutions of \eqref{TWS} with boundary condition \eqref{BCv} at the unstable tail.
{Hence we assume that $\b^* >0 $ so that $E^*$ is unstable. Also, we impose condition \eqref{vr} 
from Theorem~\ref{th:sc1}.}

\subsubsection{Case $s>s^*$} We fix here $s > s^*$ and further assume that \eqref{vd} is enforced. Let~$\l_1$ and~$\l_2$ be the two positive roots of
\beaa
G(x): = d_2x^2-sx+r_2\b^{*},
\eeaa
which are given by
\bea\label{l}
\l_1:=\frac{s-\sqrt{s^2-4d_2r_2\b^{*}}}{2d_2},\quad\l_2:=\frac{s+\sqrt{s^2-4d_2r_2\b^{*}}}{2d_2}.
\eea
It follows from the first inequality in~\eqref{vd} that
\be\label{vineq}
 0<\l_1\le \min{\left\{\frac{s}{2d_1},\frac{s}{2d_3}\right\}}.
\ee
Moreover, by \eqref{vr} and \eqref{vd}, we have that
$$d_1 \lambda_1^2 - s \lambda_1 + r_1 [k + b_1 (2a -1)] \leq d_2 \lambda_1^2 - s\lambda_1 + r_2 \beta^* = 0,$$
hence
\be\label{vineq2}
0<R:=\frac{r_1[k+b_1(2a-1)]}{-(d_1\l_1^2-s\l_1)}\le 1.
\ee

Now we introduce the following continuous functions 
\bea
&& \overline{\phi}_1(z) =\left\{
                           \begin{array}{ll}
                               u^{*}+b_1w^{*}e^{\l_1z}, &\hbox{$z < 0$,} \\
                               1                       , &\hbox{$z > 0$,}
                           \end{array}
                           \right.\label{vsols11}\\
&& \underline{\phi}_1(z) =\left\{
                           \begin{array}{ll}
                               u^{*}(1-p_1e^{\l_1 z}), &\hbox{$z < z_1$,} \\
                               0                  , &\hbox{$z > z_1$,}
                           \end{array}
                           \right.\label{vsols12}\\
&& \overline{\phi}_2(z) =\left\{
                           \begin{array}{ll}
                               e^{\l_1 z}, &\hbox{$z < 0$,} \\
                               1        ,  &\hbox{$z > 0$,}
                           \end{array}
                           \right.\label{vsols21}\\
&& \underline{\phi}_2(z) =\left\{
                           \begin{array}{ll}
                               e^{\l_1 z}-qe^{\mu\l_1 z}, &\hbox{$z < z_2$,} \\
                               0         , &\hbox{$z > z_2$,}
                           \end{array}
                           \right.\label{vsols22}\\
&& \overline{\phi}_3(z) =\left\{
                           \begin{array}{ll}
                               w^{*}+Ae^{\l_1 z}, &\hbox{$z <0$,} \\
                               2a-1         , &\hbox{$z >0$,}
                           \end{array}
                           \right.\label{vsols31}\\
&& \underline{\phi}_3(z) =\left\{
                           \begin{array}{ll}
                               w^{*}(1 -e^{\l_1z}), &\hbox{$z<0$,} \\
                               0                  , &\hbox{$z>0$,}
                           \end{array}
                           \right.\label{vsols32}
\eea
where constants $A,p_1,\mu$ and $q$ are defined in sequence as follows:
\bea
&& A = (2a-1)-w^{*} >0;\label{vB}\\
&& R\le p_1\le 1;\label{vp}\\
&& 1<\mu<\min{ \left\{ 2,\l_2/\l_1 \right\} };\label{vmu}\\
&& q>\max{\left\{  1 ,\frac{r_2(hb_1w^{*}+1+b_2A)}{-G(\mu\l_1)} \right\} } . \label{vq}
\eea
The points $z_1$ and $z_2$ are defined by
\beaa
 z_1:=\frac{-\ln{p_1}}{\l_1},\;  z_2 :=  \frac{-\ln(q)}{(\mu-1)\l_1 } .
\eeaa
By the choice of $p_1$  in~\eqref{vp}, which is admissible due to \eqref{vineq2}, and because $q>1$, we have that $z_2<0\le z_1$. Note also $G(\mu\l_1)<0$ so that $q$ is well-defined.
%

\begin{lemma}\label{u0w}
Suppose that $\beta^*>0$ and $s>s^{*}$. Let 
\eqref{vr} 
and \eqref{vd} be enforced.
Then the functions $(\overline{\phi}_1,\overline{\phi}_2,\overline{\phi}_3)$
and $(\underline{\phi}_1,\underline{\phi}_2,\underline{\phi}_3)$ defined in \eqref{vsols11}-\eqref{vsols32}
are a pair of generalized upper and lower solutions of \eqref{TWS} in the sense of Definition~\ref{uplodfn}, satisfy \eqref{order}-\eqref{lrd} and are such that boundary condition \eqref{BCv} holds at the unstable tail.
\end{lemma}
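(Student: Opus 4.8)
The plan is to verify each of the six differential inequalities \eqref{u1}--\eqref{l3} from Definition~\ref{uplodfn} piecewise on the intervals determined by the breakpoints $z_2 < 0 \le z_1$ (together with $z=0$), then check the ordering \eqref{order} and the one-sided derivative conditions \eqref{lrd} at the finitely many kinks, and finally read off the unstable-tail limit \eqref{BCv} directly from the explicit formulas. Since the functions are piecewise of exponential type, on each subinterval each operator $\mathcal{U}_i$, $\mathcal{L}_i$ is either a constant multiple of an exponential or a quadratic-in-exponentials expression, so each inequality reduces to an elementary sign check. I would organize the verification component by component.

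\textbf{The first prey component.} For $\overline{\phi}_1$: on $z>0$ one has $\overline{\phi}_1 \equiv 1$, so $\mathcal{U}_1 = r_1[1 - 1 - k\underline{\phi}_2 - b_1\underline{\phi}_3] \le 0$ since $\underline{\phi}_2,\underline{\phi}_3 \ge 0$; on $z<0$, plugging in $u^* + b_1 w^* e^{\lambda_1 z}$ and using that $G$-type quadratic $d_1\lambda_1^2 - s\lambda_1 \le 0$ (from \eqref{vineq}) together with the defining relations $r_1 u^*(1 - u^* - b_1 w^*)=0$ kills the leading terms, leaving a manifestly nonpositive remainder after using $\underline{\phi}_2, \underline{\phi}_3$ lie below $\overline{\phi}_2, \overline{\phi}_3$. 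For $\underline{\phi}_1$: on $z > z_1$ it is $0$ and $\mathcal{L}_1 = 0$; on $z < z_1$ one substitutes $u^*(1 - p_1 e^{\lambda_1 z})$ and the key point is that the coefficient $R$ defined in \eqref{vineq2} and the choice $p_1 \ge R$ in \eqref{vp} are exactly what is needed so that the bad term $-k\overline{\phi}_2 - b_1\overline{\phi}_3 \ge -[k + b_1(2a-1)]e^{\lambda_1 z}$ (valid since $\overline{\phi}_3 \le 2a-1$ and $\overline{\phi}_2 \le e^{\lambda_1 z}$ there because $z<z_1 \le 0$... actually one needs $\overline\phi_2 \le $ something; more carefully $\overline\phi_2 = e^{\lambda_1 z}$ for $z<0$) gets dominated. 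This is where condition \eqref{vr} enters, through $R \le 1$.

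\textbf{The second prey and predator components.} For $\overline{\phi}_2$, $\underline{\phi}_2$: the upper solution on $z<0$ is $e^{\lambda_1 z}$, which satisfies $d_2 \lambda_1^2 - s\lambda_1 + r_2\beta^* \le d_2\lambda_1^2 - s\lambda_1 + r_2 = G(\lambda_1) + r_2(1-\beta^*)$; actually since $\lambda_1$ is a root of $G$ we get $\mathcal{U}_2 = e^{\lambda_1 z}[G(\lambda_1)] + (\text{lower order}) = $ negative terms $+ r_2 e^{\lambda_1 z}[-h\underline{\phi}_1 - b_2\underline{\phi}_3 + (1 - \beta^* - \dots)]$, which one checks is $\le 0$. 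The lower solution $\underline{\phi}_2 = e^{\lambda_1 z} - q e^{\mu\lambda_1 z}$ is the classical KPP-type sub-solution: the $e^{\lambda_1 z}$ term is annihilated by $G(\lambda_1)=0$, the $-q e^{\mu\lambda_1 z}$ term produces $-q\,G(\mu\lambda_1) > 0$ since $G(\mu\lambda_1)<0$ (as $\mu < \lambda_2/\lambda_1$), and the nonlinear corrections are controlled precisely by the lower bound on $q$ in \eqref{vq}; here $\mu < 2$ is used so that $e^{2\lambda_1 z}$ terms are higher order than $e^{\mu\lambda_1 z}$. The predator component is analogous: $\overline{\phi}_3 = w^* + A e^{\lambda_1 z}$ with $A$ from \eqref{vB} chosen so the value matches $2a-1$ at $z=0$, and the relation $r_3 w^*(-1 + au^* - w^*) = 0$ plus $\overline\phi_1 \le 1, \overline\phi_2 \le 1$ giving $-1 + a\overline\phi_1 + a\overline\phi_2 \le 2a-1$ handles $\mathcal U_3$; for $\underline{\phi}_3 = w^*(1 - e^{\lambda_1 z})$ one uses $\underline\phi_1, \underline\phi_2 \ge 0$ and the same steady-state identity. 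For \eqref{order} one compares the explicit exponentials on each piece (e.g.\ $\underline\phi_2 \le \overline\phi_2$ because $q e^{\mu\lambda_1 z} \ge 0$, etc.), and \eqref{lrd} is checked at $z = z_2, 0, z_1$ by computing left/right derivatives of the explicit pieces --- at $z=0$ the upper solutions have a concave kink and the lower solutions a convex kink by construction. Finally \eqref{BCv} is immediate: as $z \to -\infty$, $e^{\lambda_1 z} \to 0$, so $(\overline\phi_1, \overline\phi_2, \overline\phi_3) \to (u^*, 0, w^*)$ and likewise for the lower solutions, forcing the same limit on any $(\phi_1,\phi_2,\phi_3)$ squeezed between them.

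\textbf{Main obstacle.} The routine-but-delicate part is the lower-solution inequality $\mathcal{L}_1 \ge 0$ for $\underline{\phi}_1$ on $z < z_1$: one must simultaneously control the competitive pressure from $\overline\phi_2$ and the predation pressure from $\overline\phi_3$, and it is exactly to make this work that one needs the quantity $R$ to satisfy $R \le 1$, which is the content of \eqref{vr} combined with \eqref{vd} (via \eqref{vineq} and \eqref{vineq2}), and the freedom $R \le p_1 \le 1$. A secondary subtlety is pinning down $\mu$ in the narrow window \eqref{vmu} so that simultaneously $q\,(-G(\mu\lambda_1)) > 0$ and the quadratic remainder terms (of order $e^{2\lambda_1 z}$ and $e^{(1+\mu)\lambda_1 z}$) are genuinely subordinate near $z = z_2$; this forces $\mu < 2$ and $\mu\lambda_1 < \lambda_2$. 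Everything else is bookkeeping with exponentials, so I would present the two prey-$u$ inequalities in full and treat the $v$- and $w$-inequalities more briefly, relegating the purely computational steps to Section~6 as the paper announces.
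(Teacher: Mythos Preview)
Your overall plan coincides with the paper's: verify each $\mathcal{U}_i\le 0$ and $\mathcal{L}_i\ge 0$ piecewise, then check \eqref{order}, \eqref{lrd}, and the tail limit. The $\mathcal{L}_1$ and $\mathcal{L}_2$ sketches are essentially right, modulo the fact that $\mathcal{L}_1$ must be split into \emph{three} regions ($z>z_1$, $0<z<z_1$, $z<0$) since $\overline{\phi}_2,\overline{\phi}_3$ change formulas at $z=0$ and $z_1\ge 0$ (not $z_1\le 0$ as you write at one point).

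There are, however, two genuine gaps. First, your treatment of $\mathcal{L}_3$ is insufficient: using only ``$\underline{\phi}_1,\underline{\phi}_2\ge 0$ and the steady-state identity'' and following through, one arrives at
\[
\mathcal{L}_3(z)\;\ge\; w^{*}e^{\lambda_1 z}\bigl[-(d_3\lambda_1^2-s\lambda_1)-r_3\bigr],
\]
and to conclude nonnegativity one needs precisely the second half of \eqref{vd}, namely $r_2\beta^{*}\ge r_3$ (combined with $d_3\le d_2$, which gives $-(d_3\lambda_1^2-s\lambda_1)\ge r_2\beta^{*}$). You attribute \eqref{vd} entirely to the $\mathcal{L}_1$ verification via \eqref{vineq2}, but in fact $R\le 1$ uses only \eqref{vr} together with $d_1\le d_2$; the inequality $r_2\beta^{*}\ge r_3$ is needed precisely (and only) for $\mathcal{L}_3$.

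Second, your $\mathcal{U}_3$ argument for $z<0$ does not work as stated: the crude bound $-1+a\overline{\phi}_1+a\overline{\phi}_2\le 2a-1$ yields
\[
\mathcal{U}_3(z)\;\le\; A(d_3\lambda_1^2-s\lambda_1)e^{\lambda_1 z}+r_3\,\overline{\phi}_3(z)\,A\bigl(1-e^{\lambda_1 z}\bigr),
\]
whose second term is strictly positive on $z<0$. The paper instead uses that the specific choice $A=2a-1-w^{*}$ together with $(1+ab_1)w^{*}=a-1$ makes the reaction bracket vanish \emph{exactly}: $a b_1 w^{*}+a-A=(1+ab_1)w^{*}-(a-1)=0$, leaving only the nonpositive diffusive term $A(d_3\lambda_1^2-s\lambda_1)e^{\lambda_1 z}$.
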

The proof of Lemma~\ref{u0w} is given in Section~\ref{verul}.

\subsubsection{Case $s=s^{*}$}
{
When $s=s^{*}$, then $\l_1 =\l_2= s/(2d_2)$. Here we impose condition \eqref{vvvd0}.
Then we introduce the following continuous functions
\bea
&& \overline{\phi}_1(z) =\left\{
                           \begin{array}{ll}
                               u^{*}+L^{*}b_1w^{*}(-z)e^{\l_1z}, &\hbox{$z < -2/\l_1$,}\\
                               1                               , &\hbox{$z > -2/\l_1$,}
                           \end{array}
                           \right.\label{vvvsols11}\\
&& \underline{\phi}_1(z) =\left\{
                           \begin{array}{ll}
                               u^{*}[1-p_1L^{*}(-z)e^{\l_1 z}], &\hbox{$z < z_1$,}\\
                               0                              , &\hbox{$z > z_1$,}
                           \end{array}
                           \right.\label{vvvsols12}\\
&& \overline{\phi}_2(z) =\left\{
                           \begin{array}{ll}
                               L^{*}(-z)e^{\l_1 z}, &\hbox{$z < -2/\l_1$,}\\
                               1                  , &\hbox{$z > -2/\l_1$,}
                           \end{array}
                           \right.\label{vvvsols21}\\
&& \underline{\phi}_2(z) =\left\{
                           \begin{array}{ll}
                               [L^{*}(-z)-q(-z)^{1/2}]e^{\l_1 z}, &\hbox{$z < z_2$,}\\
                               0                    , &\hbox{$z > z_2$,}
                           \end{array}
                           \right.\label{vvvsols22}\\
&& \overline{\phi}_3(z) =\left\{
                           \begin{array}{ll}
                               w^{*}+L^{*}A(-z)e^{\l_1 z}, &\hbox{$z<-2/\l_1$,} \\
                               2a-1                      , &\hbox{$z>-2/\l_1$,}
                           \end{array}
                           \right.\label{vvvsols31}\\
&& \underline{\phi}_3(z) =\left\{
                           \begin{array}{ll}
                               w^{*}[1 -L^{*}(-z)e^{\l_1z}], &\hbox{$z<-2/\l_1$,} \\
                               0                           , &\hbox{$z>-2/\l_1$,}
                           \end{array}
                           \right.\label{vvvsols32}
\eea
where $L^{*} := \l_1e^2/2$, $A=2a-1-w^*$, and the constants $p_1, q$ are chosen in sequence such that
\be\label{vp_bis}
\max \{ R , 2 e^{-1} \}  \leq p_1 \leq 1
\ee
(notice that $R \le 1$ still holds thanks to \eqref{vr} and $d_1 = d_2$), and
\be\label{vvq}
q  > \max{ \left\{ \frac{4r_2(L^{*})^2M(hb_1w^{*}+1+b_2 A)}{d_2},L^{*}\sqrt{\frac{2}{\l_1}}  \right\} },\; \mbox{ with } \; M:=\left(\frac{7}{2\l_1e}\right)^{7/2}.
\ee
{It is easy to check that the function $z \in ( -\infty,0] \mapsto p_1 L^* (-z) e^{\lambda_1 z}$ reaches its maximum at $-1/\lambda_1$, where it takes the value $p_1 e/2 \geq 1$. Thus we can define $z_1$ by
\be
p_1 L^* (-z_1) e^{\l_1 z_1} = 1, \quad z_1 \in \left[ -\frac{2}{\l_1}, -\frac{1}{\l_1} \right].
\ee
We also define
$$z_2 := - \left( \frac{q}{L^*} \right)^2.$$
Note that the choice of $q$ in \eqref{vvq} ensures that $z_2<-2/\l_1$.

Then we have the following lemma, whose proof is given in Section~\ref{verul}.
\begin{lemma}\label{u00w}
Suppose that $\beta^*>0$ and $s=s^{*}$. Let 
\eqref{vr} and \eqref{vvvd0} be enforced.
Then the functions $(\overline{\phi}_1,\overline{\phi}_2,\overline{\phi}_3)$
and $(\underline{\phi}_1,\underline{\phi}_2,\underline{\phi}_3)$ defined in \eqref{vvvsols11}-\eqref{vvvsols32}
are a pair of generalized upper and lower solutions of \eqref{TWS} in the sense of Definition~\ref{uplodfn}, satisfy \eqref{order}-\eqref{lrd} and are such that boundary condition \eqref{BCv} holds at the unstable tail.
\end{lemma}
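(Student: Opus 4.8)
The plan is a piecewise verification. On each of the intervals cut out by the break points $z_2 < -2/\lambda_1 \le z_1 \le -1/\lambda_1 < 0$, every profile in \eqref{vvvsols11}--\eqref{vvvsols32} is either locally constant (for large $z$) or an explicit combination of $u^{*}$, $w^{*}$ and the functions $(-z)^{\gamma}e^{\lambda_1 z}$ with $\gamma \in \{0,\tfrac12,1\}$ (for small $z$), so the six inequalities of Definition~\ref{uplodfn} become elementary sign checks on each piece. The limit \eqref{BCv} is then immediate, since $\overline{\phi}_1,\underline{\phi}_1 \to u^{*}$, $\overline{\phi}_2,\underline{\phi}_2 \to 0$ and $\overline{\phi}_3,\underline{\phi}_3 \to w^{*}$ as $z\to-\infty$, and any solution trapped between them inherits the same limit. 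The ordering $\underline{\phi}_i \le \overline{\phi}_i$, nonnegativity, and the one-sided derivative conditions \eqref{lrd} follow from the elementary facts that $z\mapsto(-z)e^{\lambda_1 z}$ increases on $(-\infty,-1/\lambda_1)$, decreases on $(-1/\lambda_1,0)$, stays $\le e/2$, and equals $1$ at $z=-2/\lambda_1$ by the choice $L^{*}=\lambda_1 e^2/2$; together with $\max\{R,2e^{-1}\}\le p_1\le 1$ and $q>L^{*}\sqrt{2/\lambda_1}$ this pins down $z_1,z_2$, forces $z_2<-2/\lambda_1$ and $\underline{\phi}_2>0$ on $(-\infty,z_2)$, and gives the correct convexity of each kink.

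The computational engine is the following identity in the critical regime $s=s^{*}$, where $\lambda_1=s/(2d_2)$ is the double root of $d_2x^2-sx+r_2\beta^{*}$ and $d_1=d_2$: for every $\gamma$,
\[
\left(d_2\partial_z^2 - s\partial_z + r_2\beta^{*}\right)\!\left((-z)^{\gamma}e^{\lambda_1 z}\right) = d_2\,\gamma(\gamma-1)\,(-z)^{\gamma-2}e^{\lambda_1 z},
\]
the two intermediate powers dropping out precisely because $\lambda_1$ is a double root and $s=2d_2\lambda_1$. Thus $(-z)e^{\lambda_1 z}$ lies exactly in the kernel of the $v$-linearization at $E^{*}$ (which is why the exact profile is used in $\overline{\phi}_1,\overline{\phi}_2,\overline{\phi}_3$), while $(-z)^{1/2}e^{\lambda_1 z}$ produces the crucial negative term $-\tfrac{d_2}{4}(-z)^{-3/2}e^{\lambda_1 z}$. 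Using this together with the steady-state relations $u^{*}+b_1w^{*}=1$, $w^{*}=au^{*}-1$ and the identity $A=ab_1w^{*}+a$ (from $A=2a-1-w^{*}$), one checks that along the small-$z$ branch the reaction brackets collapse: for instance $1-\overline{\phi}_1-b_1\underline{\phi}_3\equiv 0$ and $-1+a\overline{\phi}_1+a\overline{\phi}_2-\overline{\phi}_3\equiv 0$. After these reductions, $\mathcal U_1,\mathcal U_2,\mathcal U_3$ and $\mathcal L_1$ all reduce to sign conditions of the type (constant)$\cdot(-z)^2e^{2\lambda_1 z}$, which close because $p_1\le 1$, $k>1$, $a>1$ and $p_1\ge R$; the constant-$z$ pieces are trivial since there the second-order operator vanishes and the reaction has the correct sign. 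The hypothesis $r_2(2-d_3/d_2)\beta^{*}\ge r_3$ in \eqref{vvvd0} enters only in $\mathcal L_3$: there $d_3\underline{\phi}_3''-s\underline{\phi}_3'\ge w^{*}L^{*}\lambda_1^2(2d_2-d_3)(-z)e^{\lambda_1 z}$, while the reaction term is $\ge -r_3w^{*}L^{*}(-z)e^{\lambda_1 z}(au^{*}p_1-w^{*})$ with $au^{*}p_1-w^{*}\le 1$, so the inequality closes exactly when $\lambda_1^2(2d_2-d_3)=\tfrac{r_2\beta^{*}}{d_2}(2d_2-d_3)\ge r_3$.

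The one genuinely delicate inequality is $\mathcal L_2\ge 0$ on $(-\infty,z_2)$, where the kernel term from $L^{*}(-z)e^{\lambda_1 z}$ vanishes to second order and the sign is decided by the competition between the positive correction $\tfrac{qd_2}{4}(-z)^{-3/2}e^{\lambda_1 z}$ produced by $-q(-z)^{1/2}e^{\lambda_1 z}$ and the negative quadratic terms. Writing $1-h\overline{\phi}_1-\underline{\phi}_2-b_2\overline{\phi}_3=\beta^{*}-\underline{\phi}_2-L^{*}(hb_1w^{*}+b_2A)(-z)e^{\lambda_1 z}$ and bounding $\underline{\phi}_2\le L^{*}(-z)e^{\lambda_1 z}$, one obtains
\[
\mathcal L_2(z) \ge \frac{qd_2}{4}(-z)^{-3/2}e^{\lambda_1 z} - r_2(L^{*})^2(1+hb_1w^{*}+b_2A)(-z)^2e^{2\lambda_1 z},
\]
which is nonnegative as soon as $q\ge \tfrac{4r_2(L^{*})^2(1+hb_1w^{*}+b_2A)}{d_2}\,(-z)^{7/2}e^{\lambda_1 z}$ for every $z<0$; since $\sup_{z<0}(-z)^{7/2}e^{\lambda_1 z}=\big(\tfrac{7}{2\lambda_1 e}\big)^{7/2}=M$, this is guaranteed by the first lower bound on $q$ in \eqref{vvq}. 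I expect this to be the main obstacle: one must recognize that the resonance forces an \emph{algebraically} (not exponentially) decaying corrector and that the expansion has to be pushed two orders past the leading term, identify the exponent $7/2$ and compute $\sup(-z)^{7/2}e^{\lambda_1 z}=M$ so that \eqref{vvq} is the sharp threshold, and arrange all of this while keeping $0\le\underline{\phi}_2\le\overline{\phi}_2$ and the correct jump at $z_2=-(q/L^{*})^2$, which is what dictates the second lower bound on $q$. On $(z_2,+\infty)$ one has $\underline{\phi}_2\equiv 0$, so $\mathcal L_2=0$ and nothing further is needed.
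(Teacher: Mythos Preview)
Your proposal is correct and takes essentially the same approach as the paper: a piecewise verification of the six differential inequalities using the double-root identity at $s=s^*$, the steady-state relations $u^*+b_1w^*=1$, $au^*-w^*=1$, $A=ab_1w^*+a$, and the bound $\sup_{z<0}(-z)^{7/2}e^{\lambda_1 z}=M$ for the delicate $\mathcal{L}_2$ step. A minor expository slip: only $\mathcal{U}_2$ literally reduces to (constant)$\cdot(-z)^2e^{2\lambda_1 z}$ --- for $\mathcal{U}_1,\mathcal{U}_3,\mathcal{L}_1$ the residual terms are combinations of $e^{\lambda_1 z}$ and $(-z)e^{\lambda_1 z}$ with nonpositive coefficients (this is where $d_2\le d_3$ from \eqref{vvvd0} enters for $\mathcal{U}_3$) --- but the sign checks are as elementary as you describe and close for exactly the reasons you give.
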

With Lemmas~\ref{u0w} and \ref{u00w} in hand, the first part of Theorem~\ref{th:sc1} is proved by applying Proposition~\ref{exist}.

\subsection{Upper-lower solutions for waves invading $E_* = (0,v_*,w_*)$}\label{uu}

In this subsection we shall construct generalized upper-lower solutions of \eqref{TWS} with boundary condition \eqref{BCu}. Hence we assume that $\b_*>0$ and $E_*$ is unstable, and we impose condition \eqref{uur}.

\subsubsection{Case  $s>s_{*}$} Here we also impose condition \eqref{uud}. Let $\sigma_1$ and $\sigma_2$ be the two positive roots of $H(x): = d_1x^2-sx+r_1\b_{*}$, that is,
\be\label{ul}
\sigma_1:=\frac{s-\sqrt{s^2-4d_1r_1\b_{*}}}{2d_1},\quad\sigma_2:=\frac{s+\sqrt{s^2-4d_1r_1\b_{*}}}{2d_1}.
\ee
By \eqref{uud}, we have
\be\label{uineq}
0<\sigma_1\le \min{\left\{\frac{s}{2d_2},\frac{s}{2d_3}\right\}}.
\ee
Moreover, by the same reasoning as that for \eqref{vineq2}, using \eqref{uur} and \eqref{uud}, we have
\be\label{uineq2}
0<S:=\frac{r_2[h+b_2(2a-1)]}{-(d_2\sigma_1^2-s\sigma_1)}\le 1.
\ee
Let the constants $B,p_2,\mu,q$ be defined in sequence as follows
\bea
&&  B = (2a-1)-w_{*};\label{uuA}\\
&& S\le p_2\le 1;\label{uup}\\
&& 1<\mu<\min\{ 2,\sigma_2/\sigma_1 \};\label{uumu}\\
&& q>\max\left\{1 ,\frac{r_1(1+kb_2w_{*}+b_1B)}{-H(\mu\sigma_1)} \right\};\label{uuq} 
\eea
as well as $z_0:=-\ln(q)/[(\mu-1)\sigma_1]$, $z_2:={-\ln(p_2)}/{\sigma_1}$. Note that $H(\mu\sigma_1)<0$ and so $q$ is well-defined. Also, we have $z_0<0\le z_2$.

With these parameters, we introduce the following continuous functions
\bea
&& \overline{\phi}_1(z) =\left\{
                           \begin{array}{ll}
                               e^{\sigma_1z}, &\hbox{$z < 0$,} \\
                               1        , &\hbox{$z > 0$,}
                           \end{array}
                           \right.\label{uusols11}\\
&& \underline{\phi}_1(z) =\left\{
                           \begin{array}{ll}
                               e^{\sigma_1 z}-qe^{\mu\sigma_1z}, &\hbox{$z < z_0$,} \\
                               0              , &\hbox{$z > z_0$,}
                           \end{array}
                           \right.\label{uusols12}\\
&& \overline{\phi}_2(z) =\left\{
                           \begin{array}{ll}
                              v_{*}+b_2w_{*}e^{\sigma_1 z}, &\hbox{$z < 0$,} \\
                               1                          , &\hbox{$z > 0$,}
                           \end{array}
                           \right.\label{uusols21}\\
&& \underline{\phi}_2(z) =\left\{
                           \begin{array}{ll}
                               v_{*}(1-p_2e^{\sigma_1 z}), &\hbox{$z < z_2$,} \\
                               0                     , &\hbox{$z > z_2$,}
                           \end{array}
                           \right.\label{uusols22}\\
&& \overline{\phi}_3(z) =\left\{
                           \begin{array}{ll}
                               w_{*}+Be^{\sigma_1 z}, &\hbox{$z <0$,} \\
                               2a-1             , &\hbox{$z >0$,}
                           \end{array}
                           \right.\label{uusols31}\\
&& \underline{\phi}_3(z) =\left\{
                           \begin{array}{ll}
                                w_{*}(1 -e^{\sigma_1z}), &\hbox{$z<0$,} \\
                                0                     , &\hbox{$z>0$.}
                           \end{array}
                           \right.\label{uusols32}
\eea
Then we have the following lemma, whose proof is given in Section~\ref{verul}.
\begin{lemma}\label{0vw}
Suppose that $\beta_*>0$ and $s>s_{*}$. {Let \eqref{uur} and \eqref{uud} be enforced.}
Then the functions $(\overline{\phi}_1,\overline{\phi}_2,\overline{\phi}_3)$
and $(\underline{\phi}_1,\underline{\phi}_2,\underline{\phi}_3)$ defined in \eqref{uusols11}-\eqref{uusols32}
are a pair of generalized upper and lower solutions of \eqref{TWS} in the sense of Definition~\ref{uplodfn}, satisfy \eqref{order}-\eqref{lrd} and are such that boundary condition \eqref{BCu} holds at the unstable tail.
\end{lemma}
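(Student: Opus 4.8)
My plan is a direct, piecewise verification. I would check, in turn, the six differential inequalities of Definition~\ref{uplodfn} on each of the finitely many intervals cut out by the breakpoints $E=\{z_0,0,z_2\}$ (recall $z_0<0\le z_2$), the ordering \eqref{order}, the one-sided derivative conditions \eqref{lrd}, and the convergence of the profiles to $E_*=(0,v_*,w_*)$ at $-\infty$. As a sanity check, the whole construction is the mirror image of the one in Lemma~\ref{u0w}: exchanging the two prey equations, i.e. the parameter change $(d_1,d_2,r_1,r_2,h,k,b_1,b_2)\mapsto(d_2,d_1,r_2,r_1,k,h,b_2,b_1)$, carries $E^*,\beta^*,s^*,\lambda_1$ to $E_*,\beta_*,s_*,\sigma_1$, conditions \eqref{vr},\eqref{vd} to \eqref{uur},\eqref{uud}, and the profiles \eqref{vsols11}--\eqref{vsols32} to \eqref{uusols11}--\eqref{uusols32}, so formally the statement is Lemma~\ref{u0w} for the transformed system; I prefer to run the argument directly to keep it self-contained. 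The ingredients I would record at the outset are: the defining identities of $E_*$, namely $1-v_*-b_2w_*=0$ and $av_*-w_*-1=0$; the characteristic relation $d_1\sigma_1^2-s\sigma_1=-r_1\beta_*$, whence $\sigma_1(s-d_1\sigma_1)=r_1\beta_*$ and $s-d_1\sigma_1>0$; the positivity $B=2a-1-w_*>0$; the inequality $H(\mu\sigma_1)<0$ (since $\sigma_1<\mu\sigma_1<\sigma_2$); and, combining \eqref{uineq} with \eqref{uud}, that $s-d_j\sigma_1\ge s/2>0$ and $\sigma_1(s-d_j\sigma_1)\ge r_1\beta_*$ for $j=2,3$. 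I would also use $0<S\le1$ from \eqref{uineq2}.

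For the upper inequalities: to the right of the relevant breakpoint all upper profiles are the constants $(1,1,2a-1)$ and all lower profiles vanish, so $\mathcal{U}_1,\mathcal{U}_2\le0$ reduce to the nonnegativity of $k\underline{\phi}_2+b_1\underline{\phi}_3$, resp. $h\underline{\phi}_1+b_2\underline{\phi}_3$, while $\mathcal{U}_3=r_3(2a-1)(a\overline{\phi}_1+a\overline{\phi}_2-2a)\le0$ because $\overline{\phi}_1,\overline{\phi}_2\le1$. On $\{z<0\}$ each upper profile has the form $c+(\textrm{const})e^{\sigma_1z}$; substituting and using the $E_*$-identities together with $d_1\sigma_1^2-s\sigma_1=-r_1\beta_*$, both the constant and the $e^{\sigma_1z}$-order parts of the reaction brackets cancel, leaving $\mathcal{U}_1=r_1e^{2\sigma_1z}(kv_*p_2+b_1w_*-1)$, $\mathcal{U}_2=b_2w_*e^{\sigma_1z}(d_2\sigma_1^2-s\sigma_1)-r_2h\,\overline{\phi}_2\underline{\phi}_1$, and $\mathcal{U}_3=Be^{\sigma_1z}(d_3\sigma_1^2-s\sigma_1)$, all $\le0$ by $\beta_*>0$, $p_2\le1$, \eqref{uineq} and $B>0$.

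For the lower inequalities: to the right of the relevant breakpoint the corresponding lower profile vanishes and $\mathcal{L}_i=0$. For $\mathcal{L}_1$ on $\{z<z_0\}$ the terms linear in $e^{\sigma_1z}$ cancel against the reaction, and, bounding $0\le\underline{\phi}_1\le e^{\sigma_1z}$, the inequality becomes $-qH(\mu\sigma_1)\ge r_1(1+kb_2w_*+b_1B)\,e^{(2-\mu)\sigma_1z}$, which holds for $z<z_0<0$ because $\mu<2$ and $q$ satisfies \eqref{uuq}. For $\mathcal{L}_2$ and $\mathcal{L}_3$ I would first note that the diffusion-advection part is nonnegative by \eqref{uineq} ($v_*p_2\sigma_1e^{\sigma_1z}(s-d_2\sigma_1)\ge0$, resp. $w_*\sigma_1e^{\sigma_1z}(s-d_3\sigma_1)\ge0$), then compute the reaction bracket region by region ($\{z<0\}$ and $\{0\le z<z_2\}$ for $\mathcal{L}_2$; $\{z<z_0\}$ and $\{z_0<z<0\}$ for $\mathcal{L}_3$); in each subregion one gets a small expression that is \emph{not} sign-definite. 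Introducing $t:=p_2e^{\sigma_1z}\in(0,1)$, resp. $t:=e^{\sigma_1z}\in(0,1)$, and factoring out $v_*t>0$, resp. $w_*t>0$, the inequality takes the schematic form $\sigma_1(s-d_j\sigma_1)+r_j(1-t)(\cdots)\ge0$; when $(\cdots)<0$ I would use $0<1-t<1$ to replace $(1-t)(\cdots)$ by a bounded negative constant and then absorb it using $\sigma_1(s-d_j\sigma_1)\ge r_1\beta_*$ together with: $p_2\ge S$, hence $p_2\,\sigma_1(s-d_2\sigma_1)\ge r_2[h+b_2(2a-1)]>r_2(h+b_2B)$, for $\mathcal{L}_2$; and $r_1\beta_*\ge r_3$ with $p_2\le1$, $|w_*-av_*p_2|\le p_2$, for $\mathcal{L}_3$. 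The ordering \eqref{order} is then immediate ($\underline{\phi}_1\le e^{\sigma_1z}=\overline{\phi}_1$ with $\underline{\phi}_1\ge0$ on $\{z<z_0\}$; $\underline{\phi}_2\le v_*\le\overline{\phi}_2$; $\underline{\phi}_3\le w_*\le\overline{\phi}_3$); \eqref{lrd} holds because at each of $z_0,0,z_2$ the relevant one-sided derivatives of the upper profiles are $0$ from the right and strictly positive from the left, and those of the lower profiles are $0$ from the right and strictly negative from the left; and as $z\to-\infty$ all exponentials vanish, so $\overline{\phi}_i$ and $\underline{\phi}_i$ both converge to the components of $E_*$, which gives \eqref{BCu} after squeezing in Proposition~\ref{exist}.

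I expect the main obstacle to be the two lower-solution inequalities $\mathcal{L}_2$ and $\mathcal{L}_3$: there the reaction bracket changes sign, so it cannot simply be dropped and must be balanced against the small positive diffusion-advection term. This is precisely where the technical hypotheses \eqref{uur}, \eqref{uud} and the explicit lower bounds on $p_2$ in \eqref{uup} and on $q$ in \eqref{uuq} are consumed, the inequality $B=2a-1-w_*<2a-1$ being what makes \eqref{uur}--\eqref{uup} just enough. The remaining difficulty is purely bookkeeping: carrying the correct constants through the two subregions in each of $\mathcal{L}_2$ and $\mathcal{L}_3$.
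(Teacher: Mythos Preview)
Your proposal is correct and follows essentially the same route as the paper: a direct piecewise verification of the six differential inequalities on the intervals cut out by $\{z_0,0,z_2\}$, using the identities $1-v_*-b_2w_*=0$, $av_*-w_*=1$, $H(\sigma_1)=0$, together with \eqref{uineq}, \eqref{uineq2}, \eqref{uup}, \eqref{uuq} and the hypothesis $r_1\beta_*\ge r_3$ from \eqref{uud}. One minor correction: in your sketch for $\mathcal{L}_3$ the bound you want is not $|w_*-av_*p_2|\le p_2$ but rather $-av_*p_2+w_*\ge -av_*+w_*=-1$ (from $p_2\le 1$), which combined with $\underline{\phi}_3\le w_*$ gives $r_3\underline{\phi}_3(-av_*p_2+w_*)\ge -r_3w_*$; also, since the paper simply uses $\underline{\phi}_1\ge 0$, no split at $z_0$ is needed for $\mathcal{L}_3$.
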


\subsubsection{Case $s=s_{*}$}

{Here we impose \eqref{uuud0}.} When $s=s_{*}$, then $\sigma_1 =\sigma_2= s/(2d_1)=s/(2d_2)$.
Then we introduce the following continuous functions $\overline{\phi}_j(z)$ and $\underline{\phi}_j(z)$ for $j=1,2,3$.
\bea
&& \overline{\phi}_1(z) =\left\{
                           \begin{array}{ll}
                               L_{*}(-z)e^{\sigma_1z}, &\hbox{$z < -2/\sigma_1$,}\\
                               1               , &\hbox{$z > -2/\sigma_1$,}
                           \end{array}
                           \right.\label{uuusols11}\\
&& \underline{\phi}_1(z) =\left\{
                           \begin{array}{ll}
                               [L_{*}(-z)-q(-z)^{1/2}]e^{\sigma_1 z}, &\hbox{$z < z_0$,}\\
                              0                     , &\hbox{$z > z_0$,}
                           \end{array}
                           \right.\label{uuusols12}\\
&& \overline{\phi}_2(z) =\left\{
                           \begin{array}{ll}
                               v_{*}+L_{*} b_2w_{*} (-z)e^{\sigma_1 z}, &\hbox{$z < -2/\sigma_1$,}\\
                              1                               , &\hbox{$z > -2/\sigma_1$,}
                           \end{array}
                           \right.\label{uuusols21}\\
&& \underline{\phi}_2(z) =\left\{
                           \begin{array}{ll}
                               v_{*}[1-p_2L_{*}(-z)e^{\sigma_1 z}], &\hbox{$z < z_2$,}\\
                               0                            , &\hbox{$z > z_2$,}
                           \end{array}
                           \right.\label{uuusols22}\\
&& \overline{\phi}_3(z) =\left\{
                           \begin{array}{ll}
                               w_{*}+L_{*}B(-z)e^{\sigma_1 z}, &\hbox{$z<-2/\sigma_1$,} \\
                               2a-1                      , &\hbox{$z>-2/\sigma_1$,}
                           \end{array}
                           \right.\label{uuusols31}\\
&& \underline{\phi}_3(z) =\left\{
                           \begin{array}{ll}
                               w_{*}[1 -L_{*}(-z)e^{\sigma_1z}], &\hbox{$z<-2/\sigma_1$,} \\
                               0                      , &\hbox{$z>-2/\sigma_1$,}
                           \end{array}
                           \right.\label{uuusols32}
\eea
where $L_{*} := \sigma_1e^2/2$, $B=(2a-1)-w_{*}$, $p_2$ satisfies $\max \{ S, 2 e^{-1} \} \leq p_2 \leq 1$, and
\be\label{uuuq}
q\geq\max{ \left\{ \frac{4r_1 L_{*}^2M(1+kb_2w_{*}+b_1B)}{d_1},L_{*}\sqrt{\frac{2}{\sigma_1}}  \right\} },\;  \mbox{ with } \;  M:=\left(\frac{7}{2\sigma_1e}\right)^{7/2}.
\ee
Moreover, $z_0:=-(q/L_{*})^2$ and $z_2\in[-2/\sigma_1,-1/\sigma_1]$ is defined (uniquely) by
\beaa
p_2L_{*}(-z_2)e^{\sigma_1 z_2} = 1.
\eeaa
Also, the choice of $q$ in \eqref{uuuq} implies that $z_0\le -2/\sigma_1$. We shall obtain the following lemma:

\begin{lemma}\label{00vw}
Suppose that $\beta_*>0$ and $s=s_{*}$. {Let \eqref{uur} and \eqref{uuud0} be enforced.}
Then the functions $(\overline{\phi}_1,\overline{\phi}_2,\overline{\phi}_3)$
and $(\underline{\phi}_1,\underline{\phi}_2,\underline{\phi}_3)$ defined in \eqref{uuusols11}-\eqref{uuusols32}
are a pair of generalized upper and lower solutions of \eqref{TWS} in the sense of Definition~\ref{uplodfn}, satisfy \eqref{order}-\eqref{lrd} and are such that boundary condition \eqref{BCu} holds at the unstable tail.
\end{lemma}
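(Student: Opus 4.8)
The plan is to verify that the functions in \eqref{uuusols11}--\eqref{uuusols32} are a pair of generalized upper-lower solutions in the sense of Definition~\ref{uplodfn}, satisfy the order and corner conditions \eqref{order}--\eqref{lrd}, and obey \eqref{BCu} at $-\infty$ --- which is exactly the content of the lemma. Since this is the critical-speed ($s=s_*$) counterpart of Lemma~\ref{0vw}, and, under the relabelling that exchanges the two preys, the analogue of Lemma~\ref{u00w}, I expect the verification to parallel the corresponding parts of Section~\ref{verul}. I would first record that the prescribed choices of $p_2$ (with $\max\{S,2e^{-1}\}\le p_2\le 1$) and of $q$ in \eqref{uuuq} order the break points as $z_0\le -2/\sigma_1\le z_2\le -1/\sigma_1<0$: the clause $q\ge L_*\sqrt{2/\sigma_1}$ gives precisely $z_0=-(q/L_*)^2\le -2/\sigma_1$, while $p_2\ge 2e^{-1}$ makes the ``tent'' $z\mapsto p_2 L_*(-z)e^{\sigma_1 z}$ reach height $\ge 1$, so $z_2$ exists uniquely in $[-2/\sigma_1,-1/\sigma_1]$ and $\underline{\phi}_2\ge 0$ there. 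This splits $\mathbb{R}$ into finitely many intervals with exceptional set $E=\{z_0,-2/\sigma_1,z_2\}$, on each of which the six inequalities \eqref{u1}--\eqref{l3} are checked by substitution.

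The computational engine is the double-root structure: since $\sigma_1=\sigma_2=s/(2d_1)$, the function $\psi(z):=(-z)e^{\sigma_1 z}$ solves $d_1\psi''-s\psi'+r_1\beta_*\psi=0$ exactly, while $\chi(z):=(-z)^{1/2}e^{\sigma_1 z}$ satisfies $d_1\chi''-s\chi'+r_1\beta_*\chi=-\tfrac{d_1}{4}(-z)^{-3/2}e^{\sigma_1 z}<0$. On each polynomial-times-exponential piece the linear part of the operator therefore collapses to an explicit expression whose sign, where the predator component is involved, is controlled by the strict convexity margin $d_3<2d_1$ of \eqref{uuud0}; the nonlinear brackets are rewritten using the identities at $E_*$, namely $1-kv_*-b_1w_*=\beta_*$, $av_*-1=w_*$, and $v_*+b_2w_*=1$. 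In particular $\overline{\phi}_3$ is engineered so that $-1+a\overline{\phi}_1+a\overline{\phi}_2-\overline{\phi}_3$ vanishes identically on the left piece and at the flat state $(1,1,2a-1)$ (where $a\cdot 1+a\cdot 1=2a=(2a-1)+1$), so that $\mathcal{U}_3=L_*B(d_3\psi''-s\psi')\le 0$ because the affine coefficient $(s-2d_3\sigma_1)+\sigma_1 z(s-d_3\sigma_1)$ is increasing in $z$ and equals $-s$ at $z=-2/\sigma_1$. The inequalities $\mathcal{U}_1$, $\mathcal{U}_2$, $\mathcal{L}_2$ on the left pieces reduce to elementary inequalities among the coupling constants (for instance $\mathcal{U}_1\le 0$ boils down to $kv_*p_2+b_1w_*\le 1$, true since $kv_*+b_1w_*=1-\beta_*<1$ and $p_2\le 1$; and $\mathcal{L}_2\ge 0$ uses the lower bound $p_2\ge S$ to beat the term $-r_2\underline{\phi}_2 L_*(h+b_2B)(-z)e^{\sigma_1 z}$), while on the rightmost interval the lower solutions all vanish and every inequality becomes an identity or follows from $a>1$.

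Two inequalities carry the real content. The first is $\mathcal{L}_1\ge 0$ on $(-\infty,z_0)$, where $\underline{\phi}_1=(L_*(-z)-q(-z)^{1/2})e^{\sigma_1 z}$: the identities above give $\mathcal{L}_1=r_1\underline{\phi}_1\big(-\underline{\phi}_1-L_*(kb_2w_*+b_1B)(-z)e^{\sigma_1 z}\big)+\tfrac{qd_1}{4}(-z)^{-3/2}e^{\sigma_1 z}$; bounding $0\le\underline{\phi}_1\le L_*(-z)e^{\sigma_1 z}$, factoring out $(-z)^{-3/2}e^{\sigma_1 z}$, and using $\max_{z<0}(-z)^{7/2}e^{\sigma_1 z}=(7/(2\sigma_1 e))^{7/2}=M$ shows that the first clause of \eqref{uuuq}, $\tfrac{qd_1}{4}\ge r_1 L_*^2 M(1+kb_2w_*+b_1B)$, is exactly what makes $\mathcal{L}_1\ge 0$; beyond $z_0$, $\underline{\phi}_1\equiv 0$ and so $\mathcal{L}_1\equiv 0$. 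The second is $\mathcal{L}_3\ge 0$ on $z<-2/\sigma_1$, where the linear part is now positive, $d_3\underline{\phi}_3''-s\underline{\phi}_3'\ge w_* r_1\beta_*(2-d_3/d_1)L_*(-z)e^{\sigma_1 z}$ (using $\sigma_1 s=2r_1\beta_*$), and must dominate the reaction term, bounded below by $-r_3\underline{\phi}_3 L_*(-z)e^{\sigma_1 z}\ge -r_3 w_* L_*(-z)e^{\sigma_1 z}$ since $w_*-av_*p_2\ge w_*-av_*=-1$; hence $\mathcal{L}_3\ge w_* L_*(-z)e^{\sigma_1 z}\,(r_1\beta_*(2-d_3/d_1)-r_3)\ge 0$, exactly by the second clause of \eqref{uuud0}.

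It then remains to check the one-sided derivative conditions \eqref{lrd}: at each of $z_0,-2/\sigma_1,z_2$ the upper pieces join with a concave corner (right derivative $\le$ left derivative) and the lower pieces with a convex corner (the nontrivial piece decreases monotonically to $0$ and is truncated there, so its left derivative is $\le 0=$ right derivative), all immediate from the explicit formulas; and the order relation \eqref{order} and the limit \eqref{BCu} follow at once, since all six pieces tend to the corresponding components of $E_*=(0,v_*,w_*)$ as $z\to -\infty$. I expect the only genuinely non-mechanical step to be the simultaneous bookkeeping of the polynomial orders in the two key inequalities --- confirming that the $(-z)^2 e^{2\sigma_1 z}$ reaction surplus in $\mathcal{L}_1$ is absorbed by the $(-z)^{-3/2}e^{\sigma_1 z}$ correction through the $M$-maximization, and that the strict margin $2-d_3/d_1>0$ supplies the $(-z)e^{\sigma_1 z}$ surplus needed in $\mathcal{L}_3$; the remaining cases are, in spirit, transcriptions of the verifications of Lemmas~\ref{0vw} and~\ref{u00w} carried out in Section~\ref{verul}.
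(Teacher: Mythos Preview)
Your proposal is correct and follows essentially the same approach as the paper's own proof in Section~\ref{verul}: a direct case-by-case verification of the six differential inequalities on each subinterval determined by $E=\{z_0,-2/\sigma_1,z_2\}$, exploiting the double-root identity $d_1\psi''-s\psi'+r_1\beta_*\psi=0$ for $\psi(z)=(-z)e^{\sigma_1 z}$, the $(-z)^{-3/2}$ correction from $\chi(z)=(-z)^{1/2}e^{\sigma_1 z}$, the algebraic identities at $E_*$, and the bounds on $p_2,q$ from \eqref{uuuq}. Your organization is somewhat more conceptual (highlighting $\mathcal{L}_1$ and $\mathcal{L}_3$ as the substantive cases and packaging the others as routine), but the computations and the roles of the hypotheses \eqref{uur}, \eqref{uuud0} are identical to the paper's.
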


The proof of Lemma~\ref{00vw} is also given in Section~\ref{verul}. Then, similarly as before, the first part of Theorem~\ref{th:cs2} is proved, by applying Proposition~\ref{exist} together with Lemmas~\ref{0vw} and~\ref{00vw}

\section{Asymptotic behavior of stable tail}\label{sec:stable_stail}
\setcounter{equation}{0}

This section is devoted to the proof of the second parts of Theorems~\ref{th:sc1} and \ref{th:cs2}. Throughout this section, we shall denote
\beaa
\phi_j^{+} :=\limsup_{z\rightarrow\infty} \phi_j(z), \; \phi_j^{-} :=\liminf_{z\rightarrow\infty} \phi_j(z),\; j=1,2,3,
\eeaa
where $(\phi_1 ,\phi_2 , \phi_3)$ denotes any of the traveling wave solutions obtained in Subsections~\ref{vv} and~\ref{uu}. We recall that, by construction,
\be\label{ul-bound}
 0 < \phi_1 , \phi_2 (z)\leq 1,\;0 < \phi_3(z)\leq 2a-1,\quad\forall z \in \mathbb{R}.
\ee

\subsection{Preliminaries: the ODE system} Let us first introduce the 6-dimensional first order ODE system corresponding to~\eqref{TWS}:
\be
\begin{cases}\label{TWS_first}
\phi_1 ' = \psi_1, \\
\psi_1 ' = \frac{1}{d_1} [ s \psi_1 - r_1 \phi_1 (1 - \phi_1 - k \phi_2 - b_1 \phi_3 ) ] ,\\
\phi_2 ' = \psi_2, \\
\psi_2 ' = \frac{1}{d_2} [ s \psi_2 - r_2 \phi_2 (1 - h \phi_1 - \phi_2 - b_2 \phi_3 ) ] ,\\
\phi_3 ' = \psi_3, \\
\psi_3 ' = \frac{1}{d_3} [ s \psi_3 - r_3 \phi_3 (-1 + a \phi_1 + a \phi_2 - \phi_3 ) ] .
\end{cases}
\ee
For convenience, we shall denote by $\Psi = (\phi_1,\psi_1,\phi_2,\psi_2,\phi_3,\psi_3)$ any solution of \eqref{TWS_first}, and rewrite \eqref{TWS_first} as $\Psi ' = F (\Psi)$. In this subsection we state several lemmas related to the stable manifolds of various equilibria of~\eqref{TWS_first}. For convenience, we write explicitly its Jacobian matrix:
\newcommand\scalemath[2]{\scalebox{#1}{\mbox{\ensuremath{\displaystyle #2}}}}
\beaa
J_F (\Psi)  = \left(    \scalemath{0.8}{
\begin{array}{cccccc}
0 &1 & 0 & 0 & 0 & 0  \\
 -\frac{r_1}{d_1} (1- 2\phi_1 - k\phi_2 - b_1 \phi_3) &\frac{s}{d_1} &  \frac{r_1}{d_1} k\phi_1& 0 &  \frac{r_1}{d_1} b_1 \phi_1 & 0\\
0 & 0 &0 & 1 & 0 & 0\\
\frac{r_2}{d_2} h\phi_2 & 0 & -\frac{r_2}{d_2} (1-2\phi_2 -h\phi_1-b_2 \phi_3) &\frac{s}{d_2} & \frac{r_2}{d_2} b_2 \phi_2 & 0 \\
0 & 0 & 0 & 0 & 0 & 1 \\
-\frac{r_3}{d_3} a \phi_3 & 0& -\frac{r_3}{d_3} a \phi_3   & 0 &  -\frac{ r_3}{d_3} (-1 + a\phi_1 + a\phi_2 - 2\phi_3) & \frac{s}{d_3}
\end{array}
}
\right)
.
\eeaa
\begin{lemma}\label{ODE_lemma1_ter0}
There exists an open neighborhood $W_0$ of the steady state $(0,0,0,0,0,0)$ such that any solution $\Psi$ of \eqref{TWS_first}  satisfying $\Psi (z) \in W_0$ for all $z\geq 0$
must also satisfy $\Psi (z) \in \{ \phi_1 = 0 , \phi_2 = 0 \}$ for all $z$.
\end{lemma}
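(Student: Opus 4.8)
The plan is to analyze the linearization of \eqref{TWS_first} at the origin and exploit the fact that the invariant subspaces $\{\phi_1 = \psi_1 = 0\}$ and $\{\phi_2 = \psi_2 = 0\}$ are themselves invariant under the full nonlinear flow (since $\phi_i \equiv 0$ kills the corresponding equation). Evaluating $J_F$ at $\Psi = 0$, the matrix becomes block-decoupled into three $2\times 2$ blocks, one for each pair $(\phi_i,\psi_i)$: the first has characteristic polynomial $d_1\lambda^2 - s\lambda + r_1 = 0$, the second $d_2\lambda^2 - s\lambda + r_2 = 0$, and the third $d_3\lambda^2 - s\lambda - r_3 = 0$. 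The first two blocks have eigenvalues with \emph{positive} real part (their product $r_i/d_i > 0$ and sum $s/d_i \geq 0$; if $s>0$ they are real positive or a complex conjugate pair with positive real part, and if $s=0$ they are purely imaginary --- this borderline case must be handled separately, see below), while the third block has one positive and one negative real eigenvalue (product $-r_3/d_3 < 0$). Hence, generically, the local stable manifold $W^s_{\mathrm{loc}}(0)$ of the origin is at most one-dimensional and is contained in the $(\phi_3,\psi_3)$-subspace $\{\phi_1 = \psi_1 = 0,\ \phi_2 = \psi_2 = 0\}$.

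Concretely, first I would fix a small ball $W_0$ around the origin on which the stable manifold theorem applies. Any solution $\Psi$ staying in $W_0$ for all $z \geq 0$ must lie on the local stable manifold $W^s_{\mathrm{loc}}(0)$ (if it did not, the expanding directions would eventually push it out of $W_0$, using that the center/unstable part has strictly dominant growth; one can make this rigorous via a standard Lyapunov-function or cone argument, or simply by invoking the stable manifold theorem together with the characterization of $W^s_{\mathrm{loc}}$ as the set of points whose forward orbit stays in $W_0$ and decays). Since $\{\phi_1 = \psi_1 = 0\}$ and $\{\phi_2 = \psi_2 = 0\}$ are invariant, each contains its own copy of the local stable manifold of $0$ restricted to it, and by uniqueness of $W^s_{\mathrm{loc}}$ in the full space together with the tangency to the stable eigenspace $\mathrm{span}\{(\phi_3,\psi_3)\text{-stable eigenvector}\}$, we get $W^s_{\mathrm{loc}}(0) \subset \{\phi_1 = \psi_1 = 0,\ \phi_2 = \psi_2 = 0\}$. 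In particular $\Psi(z) \in \{\phi_1 = 0,\ \phi_2 = 0\}$ for $z \geq 0$, and then, since $\{\phi_1 = 0,\ \phi_2 = 0\}$ is invariant (it is even a union of invariant subspaces), the solution stays there for all $z \in \mathbb{R}$.

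The main obstacle is the degenerate case $s = 0$ (which is permitted, since the paper only assumes $s \geq 0$): then the first two $2\times 2$ blocks have purely imaginary eigenvalues $\pm i\sqrt{r_i/d_i}$, so the origin has a four-dimensional center subspace and the hyperbolic stable manifold theorem no longer isolates the $\phi_1 = \phi_2 = 0$ directions. To handle this I would argue directly with the conserved-type quantities of the decoupled linear blocks: for a solution near $0$, the quantity $d_i \psi_i^2 + r_i \phi_i^2$ (for $i = 1,2$) satisfies $\frac{d}{dz}(d_i\psi_i^2 + r_i\phi_i^2) = 2s\psi_i^2 + (\text{cubic terms}) = (\text{cubic terms})$ when $s=0$, so on a small enough ball this quantity is \emph{almost} conserved; combined with the fact that $\phi_i \geq 0$ is forced along traveling-wave profiles (or can be assumed by the problem's positivity), one shows $\phi_i$ cannot oscillate around $0$ and staying in $W_0$ for all $z \geq 0$ forces $\phi_i \equiv 0$. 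Alternatively --- and this is probably cleaner --- one notes the lemma is only ever applied to traveling-wave solutions with $s \geq 0$ arising from Subsections~\ref{vv}--\ref{uu}, where in fact $s > 0$ whenever $\beta^* > 0$ or $\beta_* > 0$ (since $s \geq s^* > 0$ or $s \geq s_* > 0$), so the purely hyperbolic argument suffices and the $s = 0$ subtlety can be dispatched in a single remark. I would present the hyperbolic case in full and note the degenerate case only briefly.
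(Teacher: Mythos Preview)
Your proposal is correct and follows essentially the same route as the paper: compute $J_F(0)$, observe the block structure gives a one-dimensional stable eigenspace lying in the $(\phi_3,\psi_3)$-plane, invoke the stable manifold theorem to get a one-dimensional local stable manifold $\mathcal{S}$, and then use the invariance of $\{\phi_1=\psi_1=0,\ \phi_2=\psi_2=0\}$ together with a dimension count to conclude $\mathcal{S}$ lies in that subspace. The paper phrases the last step slightly differently---it restricts the flow to the invariant two-plane, finds a one-dimensional stable manifold there, and argues this must coincide with the full $\mathcal{S}$ since both are one-dimensional---but this is equivalent to your tangency argument.

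Your discussion of the $s=0$ borderline is a refinement the paper simply omits: the paper asserts without qualification that $J_F(0)$ has ``one negative real eigenvalue and five eigenvalues with positive real parts,'' which tacitly presumes $s>0$. As you correctly note at the end, this is harmless because the lemma is only ever applied in Section~\ref{sec:stable_stail} to waves constructed in Subsections~\ref{vv}--\ref{uu}, where $s\geq s^*>0$ or $s\geq s_*>0$. So your instinct to dispatch the degenerate case in a remark is exactly right, and the ``almost-conserved quantity'' argument for $s=0$ is unnecessary here.
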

\begin{proof}
The matrix of the linearized system of \eqref{TWS_first} around $(0,0,0,0,0,0)$ is
\beaa
J_F (0) =
\left(
\begin{array}{cccccc}
0 &1 & 0 & 0 & 0 & 0  \\
-\frac{r_1}{d_1} &\frac{s}{d_1} & 0 & 0 & 0 & 0\\
0 & 0 &0 & 1 & 0 & 0\\
0 & 0 & -\frac{r_2}{d_2} &\frac{s}{d_2} & 0 & 0 \\
0 & 0 & 0 & 0 & 0 & 1 \\
0  & 0& 0  & 0 &  \frac{ r_3}{d_3}  & \frac{s}{d_3}
\end{array}
\right)
,
\eeaa
which has one negative real eigenvalue and five eigenvalues with positive real parts. By standard perturbation theory, this means that the steady state $(0,0,0,0,0,0)$ of \eqref{TWS_first} has a 1-dimensional stable manifold $\mathcal{S}$.
Furthermore, there is a neighborhood $W_0$ of $(0,0,0,0,0,0)$ such that, for any $\Psi (0) \in W_0 \setminus \mathcal{S}$, there exists $z >0$ such that $\Psi (z) \not \in W_0$.

Now notice that $\{\phi_1 = \psi_1 = 0 , \phi_2 = \psi_2 = 0\}$ is an invariant set for \eqref{TWS_first}. Repeating the same standard stability analysis, one finds that $(0,0,0,0,0,0)$ also admits a 1-dimensional stable manifold in the subset $\{\phi_1 = \psi_1 = 0 , \phi_2 = \psi_2 = 0 \}$. This implies that $\mathcal{S}$ is actually included in $\{\phi_1 = \psi_1 = 0 , \phi_2 = \psi_2 = 0 \}$, and the lemma follows.\end{proof}
The next two lemmas can be proved in the same way.
\begin{lemma}\label{ODE_lemma1_ter2}
There exists an open neighborhood $W_{1,u}$ of the steady state $(1,0,0,0,0,0)$ such that any solution $\Psi$ of \eqref{TWS_first}  satisfying $\Psi (z) \in W_{1,u}$ for all $z\geq 0$
must also satisfy $\Psi (z) \in \{ \phi_2 = 0 , \phi_3 = 0 \}$ for all $z$.

There exists an open neighborhood $W_{1,v}$ of the steady state $(0,0,1,0,0,0)$ such that any solution $\Psi$ of \eqref{TWS_first}  satisfying $\Psi (z) \in W_{1,v}$ for all $z\geq 0$
must also satisfy $\Psi (z) \in \{ \phi_3 = 0 \}$ for all $z$.
\end{lemma}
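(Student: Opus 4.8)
The plan is to mimic exactly the proof of Lemma~\ref{ODE_lemma1_ter0}, replacing the zero steady state by $(1,0,0,0,0,0)$ and $(0,0,1,0,0,0)$ respectively, and using the obvious invariant subspaces. First I would compute the Jacobian $J_F$ at each equilibrium from the formula displayed above. At $(1,0,0,0,0,0)$, substituting $\phi_1=1$, $\phi_2=\phi_3=0$ gives a block-triangular matrix: the $(\phi_1,\psi_1)$-block has eigenvalues satisfying $d_1\lambda^2-s\lambda+r_1=0$ (one negative, one positive, since the product of roots is $r_1/d_1>0$ and the sum is $s/d_1\geq 0$ — when $s=0$ they are $\pm\sqrt{r_1/d_1}$, still one of each sign), the $(\phi_2,\psi_2)$-block has eigenvalues satisfying $d_2\lambda^2-s\lambda-r_2(1-h)=0$ hence (since $h<1$ by \eqref{cond}) product $-r_2(1-h)/d_2<0$, so one negative and one positive, and similarly the $(\phi_3,\psi_3)$-block satisfies $d_3\lambda^2-s\lambda-r_3(a-1)=0$ which, since $a>1$, again has one root of each sign. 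Thus $J_F(1,0,0,0,0,0)$ has exactly three eigenvalues with negative real part and three with positive real part, so the stable manifold of $(1,0,0,0,0,0)$ is $3$-dimensional.

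Next I would exhibit the relevant invariant subsets. For the first claim, the set $\{\phi_2=\psi_2=0,\ \phi_3=\psi_3=0\}$ is invariant for \eqref{TWS_first} (whenever $\phi_2,\psi_2,\phi_3,\psi_3$ all vanish, their derivatives vanish), and within this $2$-dimensional subset the point $(1,0,0,0,0,0)$ is a hyperbolic equilibrium of the scalar traveling-wave ODE $d_1\phi_1''-s\phi_1'+r_1\phi_1(1-\phi_1)=0$, whose linearization $d_1\lambda^2-s\lambda+r_1=0$ has a $1$-dimensional stable subspace. Hence this subset contains a $1$-dimensional stable manifold of $(1,0,0,0,0,0)$. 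But we can do better: consider the larger invariant subset $\{\phi_2=\psi_2=0\}$ (the plane $v\equiv 0$ is clearly invariant), which is $4$-dimensional; inside it $(1,0,0,0,0,0)$ has a stable manifold of dimension equal to the number of negative-real-part eigenvalues of the corresponding $4\times 4$ block, namely the $(\phi_1,\psi_1)$ and $(\phi_3,\psi_3)$ blocks, i.e.\ $2$. Since the full stable manifold $\mathcal{S}_{1,u}$ of $(1,0,0,0,0,0)$ has dimension $3$, and it must contain both the $1$-dimensional stable manifold sitting in $\{\phi_2=\psi_2=\phi_3=\psi_3=0\}$ and the $2$-dimensional one sitting in $\{\phi_2=\psi_2=0\}$, a dimension count forces $\mathcal{S}_{1,u}\subset\{\phi_2=\psi_2=0\}$ — wait, that only gives the dimensions $3$ and $2$. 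Here is the correct pairing: the $3$-dimensional $\mathcal{S}_{1,u}$ is spanned by the stable eigenvectors of the three $2\times2$ blocks, one of which lies in the $\phi_2,\psi_2$ directions. So I must instead use two different invariant subspaces whose stable manifolds together force $\mathcal{S}_{1,u}\subset\{\phi_2=0,\phi_3=0\}$: the subspace $\{\phi_3=\psi_3=0\}$ (invariant, $4$-dimensional, stable manifold of dimension $2$ coming from the $\phi_1$- and $\phi_2$-blocks) and the subspace $\{\phi_2=\psi_2=0\}$ (invariant, $4$-dimensional, stable manifold of dimension $2$ from the $\phi_1$- and $\phi_3$-blocks). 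Their intersection $\{\phi_2=\psi_2=\phi_3=\psi_3=0\}$ has a $1$-dimensional stable manifold; but that is not enough either. The clean argument, following the excerpt verbatim, is: since $\mathcal{S}_{1,u}$ is $3$-dimensional and the stable manifold of $(1,0,0,0,0,0)$ restricted to the invariant set $\{\phi_2=\psi_2=0\}$ is also exactly $3$-dimensional? No — it is $2$-dimensional. Let me reconsider in the writeup below.

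I now see that the block structure at $(0,0,1,0,0,0)$ is slightly different: there $\phi_1=\phi_3=0$, $\phi_2=1$, and the $(\phi_1,\psi_1)$-block satisfies $d_1\lambda^2-s\lambda+r_1(1-k)=0$ with $k>1$, hence product $r_1(1-k)/d_1<0$: one negative, one positive root; the $(\phi_2,\psi_2)$-block satisfies $d_2\lambda^2-s\lambda+r_2=0$: one of each sign; the $(\phi_3,\psi_3)$-block satisfies $d_3\lambda^2-s\lambda-r_3(a-1)=0$: one of each sign. So $\mathcal{S}_{1,v}$ is again $3$-dimensional, but now the claimed inclusion is only into $\{\phi_3=0\}$ (a $5$-dimensional invariant set), not into a codimension-$2$ set — consistent with the fact that at $(0,0,1,0,0,0)$ both $u$ and $w$ can potentially grow but the statement only asserts $w$ stays zero, because $u$ being a \emph{weak} competitor does not invade. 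Concretely: $\{\phi_3=\psi_3=0\}$ is invariant, and within it $(0,0,1,0,0,0)$ is the co-existence-free equilibrium of the two-species competition traveling-wave system; its stable manifold there has dimension $2$ (one stable direction from each of the $\phi_1$- and $\phi_2$-blocks). Meanwhile the full $\mathcal{S}_{1,v}$ is $3$-dimensional; the third stable direction is precisely the stable eigenvector of the $(\phi_3,\psi_3)$-block, which does \emph{not} lie in $\{\phi_3=\psi_3=0\}$, so this direct dimension count does \emph{not} give $\mathcal{S}_{1,v}\subset\{\phi_3=0\}$ — indeed that inclusion is false for the manifold, but the \emph{conclusion we need} (trajectories staying near $(0,0,1,0,0,0)$ for all $z\geq0$ have $\phi_3\equiv0$) is about trajectories that stay in a neighborhood, and such trajectories lie on $\mathcal{S}_{1,v}$. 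So in fact I expect the intended argument is subtler, and \textbf{the main obstacle} is exactly this: showing that a trajectory of \eqref{TWS_first} remaining in a small neighborhood $W_{1,v}$ of $(0,0,1,0,0,0)$ for all $z\geq0$ must have $\phi_3\equiv0$, given that the $w$-component's linearized dynamics at that point has a genuine stable direction. The resolution must use the sign of the nonlinearity: near $(0,0,1,0,0,0)$ one has $-1+a\phi_1+a\phi_2-\phi_3$ close to $a-1>0$, so the $w$-equation reads $d_3\phi_3''-s\phi_3'+r_3\phi_3\cdot(\text{positive})=0$ with a coefficient bounded below by $c>0$; by a standard comparison/sub-solution argument (or by noting that $\phi_3\geq0$ together with this sign forces $\phi_3$ to satisfy $d_3\phi_3''-s\phi_3'+c\,\phi_3\le0$ only if... ), a nonnegative bounded solution on $[0,\infty)$ of such an equation that stays small must be identically zero, because otherwise it grows. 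The same mechanism handles $\phi_3$ in the first assertion and, additionally there, $\phi_2$: near $(1,0,0,0,0,0)$ the $v$-equation has coefficient $1-h\phi_1-\phi_2-b_2\phi_3$ close to $1-h>0$. I would therefore write the proof as: (i) establish the sign of the relevant nonlinear coefficients on a small neighborhood; (ii) invoke the elementary ODE lemma that a bounded nonnegative solution on $[0,\infty)$ of $d\,y''-s\,y'+g(z)y=0$ with $\inf g>0$ and $y$ small must vanish identically (proved by testing against the positive eigenfunction $e^{\mu z}$ of the constant-coefficient comparison operator, or by a phase-plane argument showing $y$ cannot stay near $0$); (iii) conclude $\phi_2\equiv0$ and/or $\phi_3\equiv0$ as claimed, and in the first case note that once $\phi_2=\phi_3\equiv0$ the remaining scalar $\phi_1$-equation is automatically consistent. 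I expect step~(ii), and pinning down precisely how small the neighborhood must be so that the sign condition is uniform, to be where the real content lies; everything else parallels Lemma~\ref{ODE_lemma1_ter0} and is routine.
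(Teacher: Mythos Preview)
Your computations of the characteristic polynomials are systematically wrong by a sign, and this is what derails the whole argument. Recall that the linearization of $d\phi'' - s\phi' + r\phi\, c = 0$ gives $d\lambda^2 - s\lambda + rc = 0$, so the sign in front of the zero-order term matches the sign of the reaction coefficient $c$. At $(1,0,0,0,0,0)$ the three blocks therefore read
\[
d_1\lambda^2 - s\lambda - r_1 = 0,\qquad
d_2\lambda^2 - s\lambda + r_2(1-h) = 0,\qquad
d_3\lambda^2 - s\lambda + r_3(a-1) = 0,
\]
(you have the signs on the constant terms reversed in all three). Since $1-h>0$ and $a-1>0$, and since $s>0$ in the situations where the lemma is applied, the $(\phi_2,\psi_2)$- and $(\phi_3,\psi_3)$-blocks each contribute \emph{two} eigenvalues with positive real part and \emph{no} stable direction; only the $(\phi_1,\psi_1)$-block contributes one negative eigenvalue. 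Hence the stable manifold $\mathcal{S}_{1,u}$ is $1$-dimensional, not $3$-dimensional. The invariant set $\{\phi_2=\psi_2=0,\ \phi_3=\psi_3=0\}$ already contains a $1$-dimensional stable manifold of $(1,0,0,0,0,0)$, so $\mathcal{S}_{1,u}$ lies entirely in it and the conclusion follows verbatim as in Lemma~\ref{ODE_lemma1_ter0}. The same correction applies at $(0,0,1,0,0,0)$: the $(\phi_3,\psi_3)$-block is $d_3\lambda^2 - s\lambda + r_3(a-1)=0$ (not $-r_3(a-1)$), so it contributes no stable direction; the $(\phi_1,\psi_1)$- and $(\phi_2,\psi_2)$-blocks each give one, and the full stable manifold is $2$-dimensional, matching the dimension of the stable manifold inside the invariant set $\{\phi_3=\psi_3=0\}$.

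Once the signs are fixed, the entire ``obstacle'' you identified disappears: there is no genuine stable direction for $\phi_3$ at either equilibrium, so no trajectory with $\phi_3\not\equiv 0$ can stay in the neighborhood, and the paper's one-line ``proved in the same way'' is fully justified. Your fallback idea of using the positive sign of $-1+a\phi_1+a\phi_2-\phi_3$ near these points is in fact the nonlinear shadow of the same eigenvalue computation (it is precisely why the $(\phi_3,\psi_3)$-block has no stable direction), and could be turned into a proof, but it is more work than the straightforward dimension count. Also note a smaller inconsistency: from your (incorrect) equation $d_1\lambda^2 - s\lambda + r_1=0$ with positive product and nonnegative sum of roots, you cannot conclude ``one of each sign''; when $s=0$ the roots would be $\pm i\sqrt{r_1/d_1}$, not $\pm\sqrt{r_1/d_1}$.
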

%

\begin{lemma}\label{ODE_lemma1_ter1}
Assume that $\beta_* >0$. There exists an open neighborhood $W_*$ of the steady state $(0,0,v_*,0,w_*,0)$ such that any solution $\Psi$ of \eqref{TWS_first}  satisfying $\Psi (z) \in W_*$ for all $z\geq 0$
must also satisfy $\Psi (z) \in \{ \phi_1 = 0 \}$ for all $z$.

Assume that $\beta^* >0$. There exists an open neighborhood $W^*$ of the steady state $(u^*,0,0,0,w^*,0)$ such that any solution $\Psi$ of \eqref{TWS_first}  satisfying $\Psi (z) \in W^*$ for all $z\geq 0$
must also satisfy $\Psi (z) \in \{ \phi_2 = 0 \}$ for all $z$.

\end{lemma}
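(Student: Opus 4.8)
The plan is to run, step for step, the same argument as in the proof of Lemma~\ref{ODE_lemma1_ter0}: identify the relevant invariant coordinate subspace of \eqref{TWS_first}, check that the steady state at hand is hyperbolic and that this subspace contains its entire stable subspace, and then appeal to uniqueness of the local stable manifold. I will treat only the first assertion, concerning $P_*:=(0,0,v_*,0,w_*,0)$ under $\b_*>0$; the second one follows by exchanging the roles of the two preys, i.e.\ replacing $(v_*,w_*,\b_*)$ by $(u^*,w^*,\b^*)$ and the subspace $\{\phi_1=\psi_1=0\}$ by $\{\phi_2=\psi_2=0\}$. Throughout, $s>0$, which is automatic in all the situations we use since $\b_*>0$ forces $s_*=2\sqrt{d_1r_1\b_*}>0$.

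First I would compute $J_F$ at $P_*$. Using $1-v_*-b_2w_*=0$ and $-1+av_*-w_*=0$, the two rows of $J_F(P_*)$ corresponding to $(\phi_1,\psi_1)$ read $(0,1,0,0,0,0)$ and $(-r_1\b_*/d_1,\;s/d_1,\;0,0,0,0)$, since $1-kv_*-b_1w_*=\b_*$; so $J_F(P_*)$ is block lower-triangular for the splitting $(\phi_1,\psi_1)\oplus(\phi_2,\psi_2,\phi_3,\psi_3)$. Its spectrum is thus the union of the spectrum of $\begin{pmatrix}0&1\\-r_1\b_*/d_1&s/d_1\end{pmatrix}$, whose characteristic polynomial is $d_1\l^2-s\l+r_1\b_*$ and whose roots have \emph{positive} real part because $\b_*>0$ and $s>0$ (positive trace and determinant), together with the spectrum of the $4\times4$ block $B$ obtained by linearising at $(v_*,0,w_*,0)$ the reduced traveling-wave system in $(\phi_2,\phi_3)$ with $\phi_1\equiv0$.

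The one point that is not purely formal is the hyperbolicity of $B$, and I expect it to be the main obstacle. The eigenvalues $\l$ of $B$ are the roots of $\det\big(\l^2D-s\l I+M\big)=0$, where $D=\diag(d_2,d_3)$ and $M=\begin{pmatrix}-r_2v_*&-r_2b_2v_*\\ r_3aw_*&-r_3w_*\end{pmatrix}$ is the reaction Jacobian at $(v_*,w_*)$. Substituting $\l=i\om$, the imaginary part of this $2\times2$ determinant is, up to sign, $s\om\big(r_2v_*+r_3w_*+(d_2+d_3)\om^2\big)$, which is nonzero for every $\om\neq0$ because $s>0$; and at $\om=0$ the determinant equals $\det M=r_2r_3v_*w_*(1+ab_2)>0$. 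Hence $B$ has no eigenvalue on the imaginary axis, and combined with the previous paragraph this shows that $P_*$ is a hyperbolic equilibrium of \eqref{TWS_first}. The remaining steps are routine.

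Finally I would conclude exactly as in Lemma~\ref{ODE_lemma1_ter0}. Since the $(\phi_1,\psi_1)$-block is purely unstable, every generalised eigenvector of $J_F(P_*)$ attached to an eigenvalue of negative real part has vanishing $\phi_1$- and $\psi_1$-components, so the stable subspace of $J_F(P_*)$, of dimension $m$ say, lies in $\{\phi_1=\psi_1=0\}$. As $\{\phi_1=\psi_1=0\}$ is an invariant set for \eqref{TWS_first} and $P_*$ is hyperbolic also for the restricted $4$-dimensional system there, with a stable subspace of the same dimension $m$, that restricted system has an $m$-dimensional stable manifold at $P_*$; by uniqueness of the local stable manifold (same dimension, same tangent space) it coincides near $P_*$ with the stable manifold $\mathcal{S}$ of the full system, so that $\mathcal{S}\subset\{\phi_1=\psi_1=0\}$. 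It then remains to choose, as in Lemma~\ref{ODE_lemma1_ter0}, a neighbourhood $W_*$ of $P_*$ so small that any $\Psi$ with $\Psi(0)\in W_*\setminus\mathcal{S}$ leaves $W_*$ in finite positive time (standard for hyperbolic equilibria); then any solution of \eqref{TWS_first} that stays in $W_*$ for all $z\ge0$ must lie on $\mathcal{S}\subset\{\phi_1=\psi_1=0\}$, and by invariance of this subspace it satisfies $\phi_1\equiv0$, which is the desired conclusion.
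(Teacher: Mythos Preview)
Your proof is correct and follows essentially the same approach as the paper, which simply states that this lemma ``can be proved in the same way'' as Lemma~\ref{ODE_lemma1_ter0} without further detail. You actually supply more than the paper does: the block lower-triangular structure of $J_F(P_*)$, the explicit check that the $(\phi_1,\psi_1)$-block is purely unstable when $\b_*>0$ and $s>0$, and especially the hyperbolicity verification for the $4\times4$ block $B$ via the imaginary-axis test, are all left implicit in the paper but are genuinely needed to invoke the stable manifold theorem.
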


\subsection{Some general estimates} For the sake of conciseness, we state here some lemmas that hold in both cases of a strong alien and a weak alien competitor prey. In particular, throughout this subsection $(\phi_1, \phi_2, \phi_3)$ shall still denote any of the traveling wave solution constructed in either Subsections~\ref{vv} or~\ref{uu}.

The first lemmas state that some components of these traveling wave solutions cannot go simultaneously to 0. They rely on a sequential argument inspired by persistence theory in dynamical systems, which has also been used in the context of spreading behavior in predator-prey systems~\cite{DGGS,DGM}.
\begin{lemma}\label{lem:liminf1_ter12}
It holds that
$$\liminf_{z \to \infty} [\phi_1 + \phi_2] (z)  >0.$$
In particular we can define
$$\delta_{12} := \inf_{z \geq 0 } [\phi_1 + \phi_2 ] (z) > 0.$$
\end{lemma}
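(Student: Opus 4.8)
The plan is to argue by contradiction, combining a weak-persistence step (based on Lemma~\ref{ODE_lemma1_ter0}) with a sequential translation argument in the spirit of persistence theory. I would work throughout with the first-order system \eqref{TWS_first}, write $\Psi=(\phi_1,\psi_1,\phi_2,\psi_2,\phi_3,\psi_3)$, and use two standard facts. First, every translate $\Psi(\cdot+\tau)$ again solves \eqref{TWS_first} and is bounded together with its first two derivatives (directly from the equations), so along any sequence $\tau_n$ a subsequence of $\Psi(\cdot+\tau_n)$ converges in $C^2_{loc}(\R)$ to a bounded nonnegative entire solution of \eqref{TWS_first}. Second, an elementary integrating-factor device: if $u\ge0$ is bounded on $[Z,\infty)$ and solves there $du''-su'=g(z)u$ with $g$ bounded of fixed sign, then $e^{-sz/d}u'$ is monotone; if $g\ge\delta>0$ this forces $u'\le0$ (otherwise $u$ blows up) and the equation forces $u(z)\to0$, while if $g\le-\delta<0$ it forces $u'\ge0$, so $u$ is nondecreasing, the equation forces $u(z)\to0$, and hence $u\equiv0$ on $[Z,\infty)$.

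First I would establish weak persistence. Suppose $[\phi_1+\phi_2](z)\to0$ as $z\to\infty$. Then for $z$ large $-1+a\phi_1+a\phi_2-\phi_3\le-\tfrac12-\phi_3$, so $\phi_3$ solves $d_3\phi_3''-s\phi_3'=r_3\phi_3(1-a\phi_1-a\phi_2+\phi_3)\ge\tfrac{r_3}{2}\phi_3$ on a half-line, and the device gives $\phi_3(z)\to0$. Since the $\phi_i$ then converge with bounded second derivatives, $\psi_i=\phi_i'\to0$ as well, so $\Psi(z)\to(0,\dots,0)$; hence $\Psi$ eventually stays in the neighbourhood $W_0$ of Lemma~\ref{ODE_lemma1_ter0}, which forces $\phi_1\equiv\phi_2\equiv0$ and contradicts the positivity of the wave. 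Therefore $\limsup_{z\to\infty}[\phi_1+\phi_2](z)>0$, and I would fix $\varepsilon_0>0$ smaller than this $\limsup$ and also than each of $\tfrac1{2a}$, $\tfrac1{1+k+2b_1}$, $\tfrac1{1+h+2b_2}$.

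Next, assume for contradiction that $\liminf_{z\to\infty}p(z)=0$ with $p:=\phi_1+\phi_2$. Since $0<\varepsilon_0/2<\limsup p$, for each large $n$ there are $\alpha_n<t_n<\beta_n$ with $\alpha_n\to\infty$, $p(\alpha_n)=p(\beta_n)=\varepsilon_0/2$, $p\le\varepsilon_0/2$ on $[\alpha_n,\beta_n]$ and $p(t_n)\le1/n$. Translating at $t_n$ and passing to a $C^2_{loc}$-limit $\Psi^\infty$, I get $(\phi_1^\infty+\phi_2^\infty)(0)=0$; since $\phi_1^\infty\ge0$ attains $0$ at an interior point and solves a second-order ODE that becomes linear once its coefficient is read off from $\Psi^\infty$, it has vanishing Cauchy data there, so $\phi_1^\infty\equiv0$, likewise $\phi_2^\infty\equiv0$, and then analysing the decoupled predator equation $d_3(\phi_3^\infty)''-s(\phi_3^\infty)'=r_3\phi_3^\infty(1+\phi_3^\infty)$ on all of $\R$ (via the same device and boundedness at both ends) gives $\phi_3^\infty\equiv0$. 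Thus $\Psi(\cdot+t_n)\to0$; since $p(\alpha_n)=\varepsilon_0/2\not\to0$, this forces $t_n-\alpha_n\to\infty$, and symmetrically $\beta_n-t_n\to\infty$, so $\beta_n-\alpha_n\to\infty$.

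Finally, taking a $C^2_{loc}$-limit $\widetilde\Psi$ of $\Psi(\cdot+\alpha_n)$, one has $(\widetilde\phi_1+\widetilde\phi_2)(0)=\varepsilon_0/2$ yet $\widetilde\phi_1+\widetilde\phi_2\le\varepsilon_0/2$ on all of $[0,\infty)$ because $\beta_n-\alpha_n\to\infty$. On $[0,\infty)$ we then have $a(\widetilde\phi_1+\widetilde\phi_2)\le\tfrac12$, so the device applied to $\widetilde\phi_3$ gives $\widetilde\phi_3(z)\to0$; fixing $Z\ge0$ with $\widetilde\phi_3\le\varepsilon_0$ on $[Z,\infty)$, the choice of $\varepsilon_0$ forces $1-\widetilde\phi_1-k\widetilde\phi_2-b_1\widetilde\phi_3\ge\tfrac12$ there, so $\widetilde\phi_1$ solves $d_1\widetilde\phi_1''-s\widetilde\phi_1'=-r_1\widetilde\phi_1(1-\widetilde\phi_1-k\widetilde\phi_2-b_1\widetilde\phi_3)\le-\tfrac{r_1}{2}\widetilde\phi_1$ on $[Z,\infty)$; the device then gives $\widetilde\phi_1\equiv0$ on $[Z,\infty)$, hence $\widetilde\phi_1\equiv0$ on $\R$ by ODE uniqueness, and likewise $\widetilde\phi_2\equiv0$ — contradicting $(\widetilde\phi_1+\widetilde\phi_2)(0)=\varepsilon_0/2>0$. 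The positivity of $\delta_{12}$ is then immediate, $\phi_1+\phi_2$ being continuous and positive on any compact set and bounded below near $+\infty$ by what precedes. The crux of the argument is the sequential step: constructing the descent intervals $[\alpha_n,\beta_n]$ and, above all, proving $\beta_n-\alpha_n\to\infty$ — it is precisely this that lets the limit $\widetilde\Psi$ inherit the \emph{global} bound $p\le\varepsilon_0/2$ on $[0,\infty)$, from which the smallness of $\varepsilon_0$ finally yields the contradiction; the remaining steps are bookkeeping around the device and the threshold for $\varepsilon_0$.
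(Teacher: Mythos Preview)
Your proof is correct and follows the same sequential-translation skeleton as the paper's, but the technical ingredients differ in a way worth noting. The paper does not separate out a weak-persistence step; instead it parametrizes by $\epsilon\to0$, shows (via a PDE limit argument, classifying bounded entire solutions of $w_t=d_3w_{xx}+r_3w(-1-w)$) that $\hat\phi_3^\epsilon\le\delta(\epsilon)\to0$ uniformly on $[0,\infty)$, and only then invokes Lemma~\ref{ODE_lemma1_ter0} to force $\hat\phi_1^\epsilon\equiv\hat\phi_2^\epsilon\equiv0$. You instead fix a single $\varepsilon_0$, prove weak persistence first (this is where you use Lemma~\ref{ODE_lemma1_ter0}), and in the final step bypass the stable-manifold lemma entirely by applying your integrating-factor device to $\widetilde\phi_3$ and then to $\widetilde\phi_1,\widetilde\phi_2$. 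This is more self-contained on the ODE side and avoids the outer limit $\epsilon\to0$, at the cost of the extra weak-persistence preamble; the paper's route is more PDE-flavored (strong maximum principle, parabolic limits) and keeps everything inside one nested contradiction.

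One minor remark: your claim that $\phi_3^\infty\equiv0$ on all of $\R$ ``via the same device and boundedness at both ends'' is not quite justified as stated (the device in the $g\ge\delta$ case only gives monotonicity and decay at $+\infty$, and the reflected equation with $+s\phi'$ does not yield blow-up the same way). Fortunately you do not need it: once $\phi_1^\infty\equiv\phi_2^\infty\equiv0$ (which follows immediately from zero Cauchy data at $0$), the conclusion $t_n-\alpha_n\to\infty$ already follows, since otherwise $p(\alpha_n)\to(\phi_1^\infty+\phi_2^\infty)(-L)=0$ along a subsequence, contradicting $p(\alpha_n)=\varepsilon_0/2$. So that step can simply be dropped.
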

\begin{proof}
We proceed by contradiction and assume that there exists a sequence $\{z_n\}_{n \in \mathbb{N}}$ such that $z_n \to \infty$ and $\phi_1 (z_n) + \phi_2 (z_n) \to 0$ as $n \to \infty$. Then we let any $\epsilon >0$ arbitrarily small, and we define another sequence
$$z_n ' := \inf \{ z \leq z_n \, | \ \forall z' \in (z, z_n), \ [ \phi_1 + \phi_2]  (z ') \leq \epsilon \},$$
so that $[ \phi_1 + \phi_2 ] (z_n')=\epsilon$ for all $n$. Furthermore, we know from elliptic estimates that $(\phi_1, \phi_2, \phi_3) (\cdot + z_n)$ converges up to extraction of a subsequence to a solution $(\bar{\phi}_1, \bar{\phi}_2, \bar{\phi}_3)$ of~\eqref{TWS}, and that $\bar{\phi}_1 \equiv \bar{\phi}_2 \equiv 0$ by the strong maximum principle. Hence $z_n - z_n '\to \infty$ as $n \to \infty$.

Passing to the limit as $n \to \infty$, we find that $(\phi_1, \phi_2, \phi_3) (\cdot + z'_n)$ converges to another solution of \eqref{TWS},
which we denote by $(\hat{\phi}_1^\epsilon, \hat{\phi}_2^\epsilon, \hat{\phi}_3^\epsilon)$. Furthermore, by construction we have that $\hat{\phi}_1^\e  (0) + \hat{\phi}_2^\e (0) = \epsilon$ and $0 \leq \hat{\phi}_1^\e , \hat{\phi}_2^\e \leq \epsilon$ for all $z \geq 0$.

We now claim that there exists $\delta (\epsilon) \to 0$ as $\epsilon \to 0$ such that
\be\label{claim0_ter12}
 | \hat{\phi}_3^\e (z) | \leq \delta (\epsilon),\; \forall\, z \geq 0.
\ee
We proceed by contradiction and assume that there exist sequences $\e_n \to 0$ and $y_n \geq 0$
such that $\lim_{n \to \infty} \hat{\phi}_3^{\e_n} (y_n) > 0$. By standard elliptic estimates, we find that $(\hat{\phi}_1^{\e_n}, \hat{\phi}_2^{\e_n}) (y_n + z) \to (0,0)$, and then that $\hat{\phi}_3^{\e_n}  (y_n  + x + st)$ converges as $n \to \infty$ to an entire in time solution $w(x,t)$ of
$$w_t = d_3 w_{xx} + r_3 w (-1 -w),$$
which is also bounded from above by $2 a -1$. Thus this limit must be identical to 0, a contradiction. Claim~\eqref{claim0_ter12} is proved.

Next, as $\e \to 0$, we get that $(\hat{\phi}_1^\e , \hat{\phi}_2^\e , \hat{\phi}_3^\e) \to (0,0,0)$ uniformly in $[0,+\infty)$. By standard elliptic estimates, it also follows that $((\hat{\phi}_1^\e )', (\hat{\phi}_2^\e)' , (\hat{\phi}_3^\e)')  \to (0,0,0)$ in $[0,+\infty)$. Therefore, we can find $\e$ small enough so that
$$(\hat{\phi}_1^\e , (\hat{\phi}_1^\e )', \hat{\phi}_2^\e , (\hat{\phi}_2^\e)' , \hat{\phi}_3^\e , (\hat{\phi}_3^\e)') \in W_0,$$
for all $z \geq 0$. Applying Lemma~\ref{ODE_lemma1_ter0}, we infer that $\hat{\phi}_1^\e \equiv \hat{\phi}_2^\e \equiv 0$.
However, by construction we have that $\hat{\phi}_1^\e(0) + \hat{\phi}_2^\e(0) = \varepsilon >0$, thus we have reached a contradiction. The lemma is proved.
\end{proof}

\begin{lemma}\label{lem:liminf1_ter23}
It holds that
$$\liminf_{z \to \infty} [\phi_2 + \phi_3] (z) >0.$$
In particular we can define
$$\delta_{23} := \inf_{z \geq 0 } [\phi_2 + \phi_3 ] (z) > 0.$$
\end{lemma}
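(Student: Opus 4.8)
The plan is to mimic the proof of Lemma~\ref{lem:liminf1_ter12}, replacing the pair $(\phi_1,\phi_2)$ by $(\phi_2,\phi_3)$ and using a suitable ``lower'' equilibrium of the ODE system as the target contradiction. Assume by contradiction that there is a sequence $z_n\to\infty$ with $[\phi_2+\phi_3](z_n)\to 0$. Fix $\epsilon>0$ small and set $z_n':=\inf\{z\le z_n\ |\ \forall z'\in(z,z_n),\ [\phi_2+\phi_3](z')\le\epsilon\}$, so $[\phi_2+\phi_3](z_n')=\epsilon$. By elliptic estimates and the strong maximum principle, translating by $z_n$ gives a solution of~\eqref{TWS} with $\bar\phi_2\equiv\bar\phi_3\equiv 0$, hence $z_n-z_n'\to\infty$. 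Translating by $z_n'$ yields a solution $(\hat\phi_1^\epsilon,\hat\phi_2^\epsilon,\hat\phi_3^\epsilon)$ of~\eqref{TWS} with $\hat\phi_2^\epsilon(0)+\hat\phi_3^\epsilon(0)=\epsilon$ and $0\le\hat\phi_2^\epsilon,\hat\phi_3^\epsilon\le\epsilon$ on $[0,\infty)$.

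The next step is to control the remaining component $\hat\phi_1^\epsilon$. Here one cannot expect $\hat\phi_1^\epsilon\to 0$; instead I would show that $\hat\phi_1^\epsilon$ is trapped near the value $1$ (the $u$-carrying capacity), i.e. that there is $\delta(\epsilon)\to 0$ with $|\hat\phi_1^\epsilon(z)-1|\le\delta(\epsilon)$ for all $z\ge 0$. The argument is the usual contradiction with elliptic estimates: if along sequences $\epsilon_n\to 0$ and $y_n\ge 0$ one had $\hat\phi_1^{\epsilon_n}(y_n)$ staying bounded away from $1$, then (using $\hat\phi_2^{\epsilon_n},\hat\phi_3^{\epsilon_n}\to 0$) the translates $\hat\phi_1^{\epsilon_n}(y_n+\cdot)$ would converge to an entire-in-time solution $u(x,t)$ of the scalar Fisher--KPP equation $u_t=d_1u_{xx}+r_1u(1-u)$, bounded in $[0,1]$; but a positive lower bound on $\phi_1$ is available (e.g. from $\delta_{12}$ of Lemma~\ref{lem:liminf1_ter12} together with $\hat\phi_2^\epsilon\le\epsilon$, which forces $\hat\phi_1^\epsilon\ge\delta_{12}-\epsilon>0$ for $\epsilon$ small), so this entire solution is bounded below by a positive constant and must be identically $1$, a contradiction. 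One subtlety: the translates $\hat\phi_1^{\epsilon}$ are limits of translates of the \emph{same} profile $\phi_1$, so the uniform lower bound $\delta_{12}$ genuinely passes to the limit; I would flag this carefully.

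With $\hat\phi_1^\epsilon\to 1$, $\hat\phi_2^\epsilon\to 0$, $\hat\phi_3^\epsilon\to 0$ uniformly on $[0,\infty)$ as $\epsilon\to 0$, and likewise their derivatives $\to 0$ by elliptic estimates, I would choose $\epsilon$ small enough that the full first-order vector $(\hat\phi_1^\epsilon,(\hat\phi_1^\epsilon)',\hat\phi_2^\epsilon,(\hat\phi_2^\epsilon)',\hat\phi_3^\epsilon,(\hat\phi_3^\epsilon)')$ lies in the neighborhood $W_{1,u}$ of $(1,0,0,0,0,0)$ from Lemma~\ref{ODE_lemma1_ter2} for all $z\ge 0$. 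That lemma then forces $\hat\phi_2^\epsilon\equiv\hat\phi_3^\epsilon\equiv 0$, contradicting $\hat\phi_2^\epsilon(0)+\hat\phi_3^\epsilon(0)=\epsilon>0$. This proves $\liminf_{z\to\infty}[\phi_2+\phi_3](z)>0$; since $\phi_2,\phi_3$ are positive and continuous on $\mathbb{R}$, the infimum $\delta_{23}:=\inf_{z\ge 0}[\phi_2+\phi_3](z)$ is therefore positive as well.

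The main obstacle is the middle step: establishing the uniform trapping $\hat\phi_1^\epsilon\approx 1$ and making sure the positive lower bound on $\phi_1$ survives the double passage to the limit (first along $z_n'\to\infty$, then along $\epsilon\to 0$). This is exactly where the new feature compared with Lemma~\ref{lem:liminf1_ter12} appears --- there the leftover component could be squeezed to $0$ and killed via the neighborhood $W_0$ of the origin, whereas here it must be pinned to a nonzero value and the relevant equilibrium is $(1,0,0,0,0,0)$, whose stable-manifold structure is the content of Lemma~\ref{ODE_lemma1_ter2}. Everything else is a direct transcription of the persistence-type sliding argument already carried out above.
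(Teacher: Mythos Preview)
Your proposal is correct and follows essentially the same approach as the paper. One minor difference: the paper only claims the trapping $|1-\hat\phi_1^\epsilon(z)|\le\delta(\epsilon)$ for $z\ge z_\epsilon$ (arguing by contradiction with sequences $y_n\to\infty$, which automatically ensures that the Fisher--KPP limit and its positive lower bound from $\delta_{12}$ hold on all of $\mathbb{R}$) and then applies Lemma~\ref{ODE_lemma1_ter2} after a shift by $z_\epsilon$, whereas your stronger uniform-on-$[0,\infty)$ version would also need to address the case of bounded $y_n$.
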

\begin{proof}
We again proceed by contradiction and assume that there exists a sequence $\{z_n\}_{n \in \mathbb{N}}$ such that $z_n \to \infty$ and $\phi_2 (z_n) + \phi_3 (z_n) \to 0$ as $n \to \infty$. Then we let any $\epsilon >0$ arbitrarily small, and we define another sequence
$$z_n ' := \inf \{ z \leq z_n \, | \ \forall z' \in (z, z_n), \ [ \phi_2 + \phi_3]  (z ') \leq \epsilon \},$$
so that $[ \phi_2 + \phi_3 ] (z_n')=\epsilon$ for all $n$. As in the previous lemma, it follows from a limit argument and a strong maximum principle that $z_n - z_n '\to \infty$ as $n \to \infty$.

Passing to the limit as $n \to \infty$, we find that $(\phi_1, \phi_2, \phi_3) (\cdot + z'_n)$ converges to another solution of \eqref{TWS},
which we denote by $(\hat{\phi}_1^\epsilon, \hat{\phi}_2^\epsilon, \hat{\phi}_3^\epsilon)$. Furthermore, by construction we have that $\hat{\phi}_2^\e  (0) + \hat{\phi}_3^\e (0) = \epsilon$ and $0 \leq \hat{\phi}_2^\e , \hat{\phi}_3^\e \leq \epsilon$ for all $z \geq 0$. Provided that $\varepsilon >0 $ is small, we also have by Lemma~\ref{lem:liminf1_ter12} that
\be\label{claim_detail}
\hat{\phi}_1^\e (z) \geq \frac{\delta_{12}}{2} >0,
\ee
for any $z \geq 0$.

We now claim that there exist $\delta (\epsilon) \to 0$ as $\epsilon \to 0$ and $z_\epsilon$ such that
\be\label{claim0_ter13}
 | 1 -\hat{\phi}_1^\e (z) | \leq \delta (\epsilon),\; \forall\, z \geq z_\epsilon .
\ee
Indeed, we proceed by contradiction and assume that there exist sequences $\e_n \to 0$ and $y_n \to \infty$
such that $\lim_{n \to \infty} \hat{\phi}_1^{\e_n} (y_n) < 1$. By standard elliptic estimates, we find that $(\hat{\phi}_2^{\e_n}, \hat{\phi}_3^{\e_n}) (y_n + z) \to (0,0)$, and then that $\hat{\phi}_1^{\e_n}  (y_n  + x + st)$ converges as $n \to \infty$ to an entire in time solution $u(x,t)$ of
$$u_t = d_1 u_{xx} + r_1 u (1 -u ),$$
which is also bounded from below by $\frac{\delta_{12}}{2}$ (recall \eqref{claim_detail} and that $y_n \to \infty$).  Thus this limit must be identical to 1. We have reached a contradiction and the claim~\eqref{claim0_ter13} is proved.

Therefore, for $\e$ small enough we have found a solution of \eqref{TWS} which is in $W_{1,u}$ for all $z \geq z_\epsilon$,
hence $\hat{\phi}_2^\e \equiv \hat{\phi}_3^\e  \equiv 0$ by Lemma~\ref{ODE_lemma1_ter2} (with a shift $z\rightarrow z-z_\e$).
This contradicts the fact that $\hat{\phi}_2^\e (0) + \hat{\phi}_3^\e (0) = \e >0$. The lemma is proved.
\end{proof}

It follows that, in all cases, the predator cannot go to extinction at the stable tail.

\begin{lemma}\label{lem:liminf1_ter3}
It holds that $\phi_3^- = \liminf_{z \to \infty} \phi_3 (z) >0$.
\end{lemma}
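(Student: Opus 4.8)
The plan is to argue by contradiction, reproducing the sequential scheme of Lemmas~\ref{lem:liminf1_ter12} and~\ref{lem:liminf1_ter23}. Assume $\phi_3^- = 0$, so that $\phi_3(z_n)\to 0$ for some $z_n\to\infty$, and fix a small $\epsilon>0$. Set $z_n' := \inf\{z\le z_n : \phi_3\le\epsilon \text{ on }(z,z_n)\}$, so that $\phi_3(z_n')=\epsilon$ and $\phi_3\le\epsilon$ on $[z_n',z_n]$; as in the preceding lemmas, elliptic estimates and the strong maximum principle applied to the $\phi_3$-equation (the limit of $(\phi_1,\phi_2,\phi_3)(\cdot+z_n)$ has third component $\equiv 0$) force $z_n-z_n'\to\infty$. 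Passing to the limit along $(\phi_1,\phi_2,\phi_3)(\cdot+z_n')$ — the case $z_n'\to-\infty$ being ruled out for small $\epsilon$, since it would give $\phi_3\le\epsilon$ on all of $\mathbb{R}$, contradicting $\phi_3(-\infty)\in\{w^*,w_*\}$; and if $z_n'$ stays bounded one works directly with the corresponding shift of the wave — one obtains a solution $(\hat\phi_1^\epsilon,\hat\phi_2^\epsilon,\hat\phi_3^\epsilon)$ of \eqref{TWS} with $\hat\phi_3^\epsilon(0)=\epsilon$ and $0\le\hat\phi_3^\epsilon\le\epsilon$ on $[0,\infty)$. The bounds of Lemmas~\ref{lem:liminf1_ter12} and~\ref{lem:liminf1_ter23} pass to this limit, so for $\epsilon$ small $\hat\phi_1^\epsilon+\hat\phi_2^\epsilon\ge\delta_{12}$ and $\hat\phi_2^\epsilon\ge\delta_{23}-\epsilon\ge\delta_{23}/2>0$ on $[0,\infty)$, while $\hat\phi_1^\epsilon,\hat\phi_2^\epsilon\le 1$ by \eqref{ul-bound}.

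The heart of the matter is to prove that $\hat\phi_1^\epsilon\to 0$ and $\hat\phi_2^\epsilon\to 1$ as $z\to\infty$, uniformly for $z\ge z_\epsilon$ with an error going to $0$ as $\epsilon\to 0$. Mimicking the claim inside the proof of Lemma~\ref{lem:liminf1_ter23}, I would suppose not: then there are $\epsilon_n\to 0$ and $y_n\to\infty$ with $(\hat\phi_1^{\epsilon_n},\hat\phi_2^{\epsilon_n})(y_n)$ bounded away from $(0,1)$, and since $\hat\phi_3^{\epsilon_n}\le\epsilon_n\to 0$, elliptic estimates make $(\hat\phi_1^{\epsilon_n},\hat\phi_2^{\epsilon_n})(\cdot+y_n)$ converge to a bounded entire solution $(u,v)$ on $\mathbb{R}$ of the two-species competition system
$$ d_1u''-su'+r_1u(1-u-kv)=0, \qquad d_2v''-sv'+r_2v(1-hu-v)=0, $$
with $0\le u\le 1$, $\delta_{23}/2\le v\le 1$, and $(u(0),v(0))\neq(0,1)$. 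It then remains to establish the Liouville-type fact that, under $h<1<k$ (condition~\eqref{cond}), any such solution must be $(u,v)\equiv(0,1)$; this contradicts $(u(0),v(0))\neq(0,1)$.

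I would obtain this Liouville property by an iterated trapping argument based on the monostable order $h<1<k$, each step being a comparison at an interior extremum of a limiting translate. Since $-kv\le 0$, $u$ is a bounded subsolution of the scalar logistic $d_1w''-sw'+r_1w(1-w)=0$, hence $u\le 1$; plugging this into the $v$-equation, $v$ is a bounded supersolution of $d_2w''-sw'+r_2w((1-h)-w)=0$, hence $v\ge 1-h$; reinjecting into the $u$-equation gives $u\le 1-k(1-h)$ (when nonnegative), hence $v\ge(1-h)(1+hk)$, and so on. The scalar recursion generated this way increases to $(1-h)/(1-hk)$ when $hk<1$ — and $k(1-h)/(1-hk)>1$ exactly because $k>1$ — and diverges when $hk\ge 1$, so after finitely many steps one has $v\ge\beta$ with $k\beta>1$. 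At that stage $1-u-kv<0$ makes the $u$-equation read $d_1u''-su'-r_1(k\beta-1)u\ge 0$ with $u$ bounded and $u\ge 0$; a comparison at a maximum forces $u\equiv 0$, and then $v(1-v)=0$ with $v>0$ yields $v\equiv 1$. Finally, $\hat\phi_1^\epsilon$, $1-\hat\phi_2^\epsilon$ and $\hat\phi_3^\epsilon$, together with their first derivatives, are then all small on $[z_\epsilon,\infty)$, so the corresponding solution of the first-order system \eqref{TWS_first} enters the neighborhood $W_{1,v}$ of $(0,0,1,0,0,0)$ and stays there; Lemma~\ref{ODE_lemma1_ter2} (after the shift $z\mapsto z-z_\epsilon$) then gives $\hat\phi_3^\epsilon\equiv 0$, contradicting $\hat\phi_3^\epsilon(0)=\epsilon>0$.

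The bookkeeping in the first paragraph is routine once Lemmas~\ref{lem:liminf1_ter12} and~\ref{lem:liminf1_ter23} are available, so the main obstacle is the passage to the competition subsystem as $\epsilon\to 0$ together with the Liouville property for it: one must check that the limit is a genuine entire solution of the competition system on all of $\mathbb{R}$ to which the trapping applies, carry out the interior-extremum comparisons rigorously on the line, and verify that the recursion really crosses the threshold $k\beta>1$ throughout the admissible range $h<1<k$ (including the borderline $hk=1$).
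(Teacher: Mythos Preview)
Your proof is correct and follows the same overall contradiction scheme as the paper (sequential construction, limiting solution $(\hat\phi_1^\epsilon,\hat\phi_2^\epsilon,\hat\phi_3^\epsilon)$, reduction to the stable manifold of $(0,0,1,0,0,0)$ via Lemma~\ref{ODE_lemma1_ter2}). The genuine difference lies in how the claim~\eqref{claim0_ter3} is established. The paper works at \emph{fixed} $\epsilon$: it takes any sequence $z_n\to\infty$, passes to a limit $(\tilde\phi_1,\tilde\phi_2)$ which satisfies only the differential \emph{inequalities}~\eqref{tilde12} (with an $\epsilon$-error from $\phi_3$), then performs the change of variables $(\overline\psi_1,\overline\psi_2)=(1-\tilde\phi_1,\tilde\phi_2)$ to turn the system into a \emph{cooperative} parabolic one~\eqref{tilde12bis}, and compares with the spatially homogeneous ODE solution starting from $(0,\delta_{23}/2)$; this yields $\tilde\phi_1\equiv 0$ and $\tilde\phi_2\in[1-b_2\epsilon,1]$ in one stroke, with the explicit rate $\delta(\epsilon)=b_2\epsilon$. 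Your approach instead lets $\epsilon_n\to0$ and $y_n\to\infty$ simultaneously, so the limit satisfies the \emph{exact} two-species competition system, and you then prove a Liouville theorem for it via an iterated maximum-principle recursion. The recursion $\beta_{n+1}=(1-h)+hk\,\beta_n$ does indeed cross $1/k$ in finitely many steps for all $h<1<k$ (the fixed point $(1-h)/(1-hk)>1/k$ is equivalent to $k>1$ when $hk<1$, and the sequence diverges when $hk\ge 1$), so the argument closes. Your route is more elementary in that it stays entirely within elliptic maximum-principle techniques and avoids the cooperative reformulation; the paper's route is shorter and gives the claim at each fixed $\epsilon$ with an explicit error, bypassing the double limit and the recursion altogether.
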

\begin{proof}
The argument is again very similar. As in the proofs of Lemmas~\ref{lem:liminf1_ter12} and~\ref{lem:liminf1_ter23}, we assume by contradiction that, for any small $\varepsilon >0$, there exist sequences $\{z_n\}_{n \in \mathbb{N}}$ and $\{z_n ' \}_{n \in \mathbb{N} }$ such that $z_n - z_n ' \to \infty$ as $n \to \infty$, and
$$\phi_3 (z_n ' ) = \varepsilon \geq \phi_3 (z),$$
for all $z \in [z_n ' , z_n]$. From elliptic estimates, $(\phi_1, \phi_2, \phi_3) (\cdot + z'_n)$ converges to another solution of \eqref{TWS}, which we denote by $(\hat{\phi}_1^\epsilon, \hat{\phi}_2^\epsilon, \hat{\phi}_3^\epsilon)$. Furthermore, we have that $\hat{\phi}_3^\e (0) = \epsilon$ and $ \hat{\phi}_3^\e \leq \epsilon$ for all $z \geq 0$. Up to reducing $\varepsilon >0 $, we also have by Lemma~\ref{lem:liminf1_ter23} that
$$\hat{\phi}_2^\e (z) \geq \frac{\delta_{23}}{2} >0$$
for any $z \geq 0$.

Next we claim that there exist $\delta (\epsilon) \to 0$ as $\epsilon \to 0$ and $z_\epsilon$ such that
\be\label{claim0_ter3}
|\hat{\phi}_1^\e (z) | +   | 1 -\hat{\phi}_2^\e (z) | \leq \delta (\epsilon),\; \forall\, z \geq z_\epsilon.
\ee
Indeed, taking any sequence $z_n \to \infty$ as $n \to \infty$, we have up to extraction of a subsequence that the pair $(\hat{\phi}_1^\e, \hat{\phi}_2^\e)(\cdot+ z_n)$ converges to $(\tilde{\phi}_1, \tilde{\phi}_2)$ which satisfies
$$0 \leq \tilde{\phi}_1 \leq 1, \; \frac{\delta_{23}}{2} \leq \tilde{\phi}_2 \leq 1,$$
and
\be\label{tilde12}
\begin{cases}
  d_1\tilde{\phi}_1''(z) -s\tilde{\phi}_1'(z)+ r_1\tilde{\phi}_1(z)[ 1-\tilde{\phi}_1(z)-k\tilde{\phi}_2(z)] \geq 0,\;z\in\R,\\
  d_2\tilde{\phi}_2''(z) -s\tilde{\phi}_2'(z)+ r_2 \tilde{\phi}_2(z)[ 1-h \tilde{\phi}_1(z)-\tilde{\phi}_2(z) - b_2 \epsilon ] \leq 0,\;z\in\R.
\end{cases}
\ee
Letting $\overline{\psi}_1  = 1- \tilde{\phi}_1$ and $\overline{\psi}_2 = \tilde{\phi}_2$, we get that $(\overline{\psi}_1, \overline{\psi}_2)$ is a nonnegative supersolution of
\be\label{tilde12bis}
\begin{cases}
    \partial_t \psi_1 -  d_1 (\psi_1)_{zz} +  s (\psi_1)_z -  r_1 (1- \psi_1)(-\psi_1 + k \psi_2 )= 0, \;z\in\R,~t>0 ,\\
    \partial_t \psi_2 - d_2 (\psi_2)_{zz} + s (\psi_2)_z  - r_2 \psi_2 ( 1-h +h \psi_1- \psi_2  - b_2 \epsilon ) = 0, \;z\in\R,~t>0,
\end{cases}
\ee
which is a cooperative reaction-diffusion system and hence satisfies a comparison principle.
In particular, we must have that $\overline{\psi}_1 (z) \geq \underline{\psi}_1 (t)$ and $\overline{\psi}_2 (z) \geq \underline{\psi}_2 (t)$ for all $t>0$ and $z \in \R$, where $(\underline{\psi}_1, \underline{\psi}_2)$ solves \eqref{tilde12bis} with the initial data $(0,\delta_{23}/2)$; notice that, due to the invariance by translation, $(\underline{\psi}_1, \underline{\psi}_2)$ does not depend on the spatial variable $z$.

Furthermore, provided that $\epsilon$ is small enough, then $(0,\delta_{23}/2)$ and $(1,1)$ are respectively a sub and a supersolution of \eqref{tilde12bis}. By the comparison principle and parabolic estimates, it follows that $\underline{\psi}_1$ and $\underline{\psi}_2$ are nondecreasing in time and converge to a constant steady state $(P,Q)$ of \eqref{tilde12bis}, with $0 \leq P \leq 1$ and $\delta_{23}/2 \leq Q \leq 1$. It is straightforward to check that, when $\epsilon >0$ is small enough, the only such steady state is $(1,1-b_2 \epsilon)$.
Putting the above facts together, we find that $\tilde{\phi}_1\equiv 0$ and $\tilde{\phi}_2\in[1 - b_2 \epsilon,1]$. Hence \eqref{claim0_ter3} is proved.

Then we get for $\e$ small enough a solution of \eqref{TWS} which is in $W_{1,v}$ for all $z \geq z_\epsilon$, hence $ \hat{\phi}_3^\e  \equiv 0$ by Lemma~\ref{ODE_lemma1_ter2}.
This contradicts our construction and the lemma is proved.
\end{proof}

We complete this subsection with a lemma which shall allow us to derive the stable tail limit, regardless of the alien species, in the co-existence case.

\begin{lemma}\label{LE-entire-full}
Assume that $E_c$ exists, i.e., \eqref{co-ex} holds, and if also
\be\label{L-extra}  k \sqrt{\frac{b_2}{b_1}} + h \sqrt{\frac{b_1}{b_2}}  < 2,
\ee
let $(u,v,w) = (u,v,w) (x,t)$ be a bounded entire solution of \eqref{main} such that
\begin{equation}\label{lb}
m:= \min\left(\inf_{(x,t)\in\R^2} u(x,t), \inf_{(x,t)\in\R^2}v(x,t),\inf_{(x,t)\in\R^2}w(x,t)\right) >0.
\end{equation}
Then
$
(u,v,w) \equiv \left(u_c  , v_c , w_c \right)$.
\end{lemma}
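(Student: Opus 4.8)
\textbf{Proof proposal for Lemma~\ref{LE-entire-full}.}

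The plan is to construct an explicit Lyapunov functional of Volterra type, exploiting condition \eqref{L-extra}, and to apply a LaSalle-type invariance argument for the spatially extended (entire-in-time) solution. First I would introduce, for the bounded entire solution $(u,v,w)$, the density
\[
\mathcal{E}(x,t) := c_1 \left( u - u_c - u_c \ln \frac{u}{u_c} \right) + c_2 \left( v - v_c - v_c \ln \frac{v}{v_c} \right) + c_3 \left( w - w_c - w_c \ln \frac{w}{w_c} \right),
\]
with positive constants $c_1, c_2, c_3$ to be chosen. Note $\mathcal{E} \geq 0$ with equality iff $(u,v,w) = (u_c,v_c,w_c)$, and the lower bound \eqref{lb} guarantees $\mathcal{E}$ and all the manipulations below are well-defined and that $\ln u$, $\ln v$, $\ln w$ stay bounded. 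Differentiating along \eqref{main} and using the steady-state relations $1 - u_c - k v_c - b_1 w_c = 0$, $1 - h u_c - v_c - b_2 w_c = 0$, $-1 + a u_c + a v_c - w_c = 0$, the reaction part of $\frac{d}{dt}\int \mathcal{E}\,dx$ becomes a quadratic form in $(u - u_c, v - v_c, w - w_c)$ whose matrix, after the standard predator-prey rescaling $c_3 = b_1 c_1 / a = b_2 c_2 / a$ (so that the $w$-cross terms cancel), reduces to a $2\times 2$ form in the prey deviations with off-diagonal coefficient controlled by $k\sqrt{b_2/b_1} + h\sqrt{b_1/b_2}$; condition \eqref{L-extra} is precisely what makes this form negative definite. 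The diffusion part contributes $-\sum_i c_i d_i u_{c,i} \int |\nabla(\ln(\cdot))|^2 \le 0$ after integration by parts (legitimate modulo a cutoff argument in $x$, since we only have boundedness, not integrability — see below).

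The main steps, in order, are: (i) verify that with the above choice of constants the reaction dissipation rate is $\le -\kappa\,(|u-u_c|^2 + |v-v_c|^2 + |w-w_c|^2)$ for some $\kappa>0$, uniformly thanks to \eqref{lb}; (ii) handle the lack of spatial decay by working with the \emph{sliding} or \emph{localized} functional $\mathcal{E}_R(t) := \int_{\R} \chi_R(x)\, \mathcal{E}(x,t)\,dx$ for a smooth cutoff $\chi_R$ supported in $[-2R, 2R]$, equal to $1$ on $[-R,R]$, and estimate the commutator terms (which are $O(1/R)$ times bounded quantities on an annulus) — alternatively, and more cleanly, one can argue pointwise-in-a-moving-frame: pick any sequence $(x_n,t_n)$, translate, extract a locally-uniform limit entire solution by parabolic estimates, and run the Lyapunov argument on the limit where one now has genuine compactness in $t$; (iii) conclude via a LaSalle invariance principle that $\omega$-limit (and $\alpha$-limit) sets of the entire orbit consist of spatially constant equilibria on which $\frac{d}{dt}\mathcal{E} \equiv 0$, which by (i) forces them to equal $(u_c,v_c,w_c)$; (iv) upgrade from "limit sets are the singleton $E_c$" to "$(u,v,w)\equiv E_c$ everywhere" using that $\mathcal{E}$ (in the localized/moving-frame sense) is monotone and bounded, hence has a limit as $t\to\pm\infty$, and an entire orbit trapped between two copies of its $\omega$-limit must be constant.

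I expect the main obstacle to be step (ii): making the Lyapunov dissipation argument rigorous for merely bounded (not $L^1$ or decaying) entire solutions on all of $\R$. The honest route is to fix $x$ arbitrarily, consider the time-translated family $(u,v,w)(x+\cdot, t+\cdot)$, and observe by parabolic regularity that along any time sequence $t_n \to +\infty$ we get a locally uniform limit $(u_\infty, v_\infty, w_\infty)$, still an entire bounded solution bounded below by $m$; one then shows the spatial-average-type functional cannot strictly decrease on such a limit, so the limit lies in the zero set of the dissipation, i.e. is $\equiv E_c$; finally a second application of this compactness, now with the monotone functional pinching the orbit, yields $(u,v,w)\equiv E_c$ identically. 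The algebraic verification in step (i) — that \eqref{L-extra} is exactly the condition for negative definiteness after the $c_i$ are chosen — is routine linear algebra and mirrors the rewriting of \eqref{L-extra} as $hk<1$ together with the two-sided bound on $b_2$ already carried out in Subsection~\ref{sec:ODE}, so I would only sketch it. A modest subtlety worth flagging: one must check that the only constant steady state in the relevant region (bounded below by $m>0$, or even just by positivity) on which the dissipation vanishes is $E_c$ itself, which follows since negative definiteness of the quadratic form means the dissipation vanishes only at the deviation $0$.
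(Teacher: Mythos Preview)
Your approach is essentially the paper's: the same Volterra-type density, coefficients chosen to kill the predator cross-terms, and \eqref{L-extra} yielding negative definiteness of the remaining $2\times2$ prey block; the paper then sharpens the reaction estimate to the pointwise bound $L_X\Phi\le -\beta\Phi$ on $[m,M]^3$ and defers the entire-solution step to \cite[Lemma~4.1]{DGGS}. One correction: because the reaction terms in \eqref{main} carry distinct rates $r_i$, the cancelling rescaling must read $c_1 r_1 b_1 = c_2 r_2 b_2 = c_3 r_3 a$ rather than your $c_3=b_1c_1/a=b_2c_2/a$; the paper's explicit choice is $\Phi=\tfrac{r_3 a u_c}{b_1 r_1}g(u/u_c)+\tfrac{r_3 a v_c}{b_2 r_2}g(v/v_c)+w_c\,g(w/w_c)$ with $g(x)=x-\ln x-1$.
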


\begin{proof}
First, we denote $M = \max \{ \|u\|_\infty, \|v\|_\infty , \| w \|_\infty \}$ and define the functions $g (x)= x - \ln (x) -1$ and $\Phi=\Phi(u,v,w)$ given by
$$
\Phi(u,v,w):= \frac{ r_3 a u_c}{b_1 r_1} g\left(\frac{u}{u_c }\right)+\frac{r_3 a v_c }{b_2 r_2 }g\left(\frac{v}{v_c }\right)+ w_c g\left(\frac{w}{w_c }\right).
$$
Let us compute the Lie derivative of $\Phi$, denoted by $L_X \Phi$, along the three dimensional vector field
$$ X:=(r_1u (1 -u -kv - b_1 w, r_2v (1-hu - v - b_2 w), r_3w (-1 + au + av - w) )$$
associated to the kinetic part of \eqref{main}. Then, using that $(u_c, v_c ,w_c)$ is a stationary state, we find
\beaa
L_X\Phi( u,v,w) &= & (\Phi_u,\Phi_v,\Phi_w)\cdot X\\
& = & \frac{r_3 a}{b_1 } (u-u_c) (1-u - kv - b_1 w) + \frac{r_3 a}{b_2} (v- v_c) (1- v - h u - b_2 w) \\
& & + r_3 (w - w_c) (-1 + a u + av - w) \\
& = & - \frac{r_3 a}{b_1 } (u-u_c)^2 -  \frac{r_3 a}{b_2} (v- v_c)^2 - r_3 (w - w_c)^2 \\
& & - \left( \frac{r_3 a k}{b_1} + \frac{r_3 a h}{b_2} \right) (u-u_c) (v-v_c) .
\eeaa
On the other hand, for any $(X_1, X_2) \in \R^2$, we have
\beaa
X_1^2 + X_2^2  - \left( \frac{k}{b_1} + \frac{h}{b_2} \right) \sqrt{b_1 b_2} |X_1| |X_2| \geq \left[ 1 - \frac{1}{2}\left( \frac{k}{b_1} + \frac{h}{b_2} \right) \sqrt{b_1 b_2}\right] (X_1^2+X_2^2).
\eeaa
Taking $X_1 = \sqrt{\frac{r_3 a}{b_1}} (u-u_c)$ and $X_2 = \sqrt{\frac{r_3 a}{b_2}} (v-v_c)$, we infer from \eqref{L-extra} that
$$L_X \Phi (u,v,w) \leq - \alpha \left[ (u-u_c)^2 + (v-v_c)^2 + (w-w_c)^2 \right]$$
for some $\alpha >0$ and any $u,v,w>0$.  Furthermore, recalling the definition of $\Phi$ above, there exists $\beta>0$ such that
 \begin{equation*}
 L_X\Phi( u,v,w)\leq -\beta \Phi( u,v,w),\;\forall (u,v,w)\in [m,M]^3 .
 \end{equation*}
From this inequality, the proof of Lemma \ref{LE-entire-full} follows from the same arguments as {that in \cite[Lemma 4.1]{DGGS}.}
\end{proof}
We point out that similar results hold for the two species predator-prey system:
\begin{lemma}\label{prelim_predatorprey}
Any entire in time solution of
\be\label{2_predatorprey}
\begin{cases}
u_t = d_1 u_{xx} + r_1 u (1- u -b_1 w), \\
w_t = d_3 w_{xx} + r_3w(-1+a u -w),
\end{cases}
\ee
such that
$$0 < \min \{ \inf_{\R^2} u , \inf_{\R^2} w \} < \max \{ \sup_{\R^2} u , \sup_{\R^2} w \} < +\infty,$$
must satisfy that $u \equiv u^*$ and $w \equiv w^*$.

Similarly, any entire in time solution of the subsystem derived from~\eqref{main} by letting $u=0$, and satisfying
$$0 < \min \{ \inf_{\R^2} v, \inf_{\R^2} w \} < \max \{ \sup_{\R^2} v , \sup_{\R^2} w \} < +\infty,$$
must satisfy that $v \equiv v_*$ and $w \equiv w_*$.
\end{lemma}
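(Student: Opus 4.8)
My plan is to follow the lines of the proof of Lemma~\ref{LE-entire-full}, now with a Lyapunov functional tailored to the two-species predator--prey structure; notably, no analogue of the coupling condition \eqref{L-extra} will be needed here, which is why the statement carries no restriction beyond the standing assumption $a>1$. It is enough to treat the first system \eqref{2_predatorprey}, since the $u=0$ subsystem of \eqref{main} is obtained from it by the substitution $(u,d_1,r_1,b_1)\leftarrow(v,d_2,r_2,b_2)$, which turns $(u^*,w^*)$ into $(v_*,w_*)$ and hence yields the second assertion at once.

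With $g(x):=x-\ln(x)-1\geq 0$ as in Lemma~\ref{LE-entire-full}, the plan is to introduce
$$\Phi(u,w):=\frac{r_3 a u^*}{b_1 r_1}\,g\!\left(\frac{u}{u^*}\right)+w^*\,g\!\left(\frac{w}{w^*}\right),$$
and to compute the Lie derivative $L_X\Phi=(\Phi_u,\Phi_w)\cdot X$ of $\Phi$ along the kinetic field $X=\big(r_1u(1-u-b_1w),\,r_3w(-1+au-w)\big)$. Using that $(u^*,w^*)$ is a zero of $X$, one has $1-u-b_1w=-(u-u^*)-b_1(w-w^*)$ and $-1+au-w=a(u-u^*)-(w-w^*)$, so that a short computation gives
$$L_X\Phi(u,w)=-\frac{r_3 a}{b_1}(u-u^*)^2-r_3(w-w^*)^2.$$
The key structural point is that the cross term $(u-u^*)(w-w^*)$ cancels identically --- in contrast with the three-species case, where the prey--prey competitive coupling leaves behind precisely the term that forces \eqref{L-extra}. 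Consequently $L_X\Phi\leq -\alpha\big[(u-u^*)^2+(w-w^*)^2\big]$ for some $\alpha>0$ and all $u,w>0$, and, by the very definition of $\Phi$, there is $\beta>0$ with $L_X\Phi(u,w)\leq-\beta\,\Phi(u,w)$ on $[m,M]^2$, where $0<m\leq M<\infty$ are lower and upper bounds for both components of the entire solution (which exist by hypothesis).

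To transfer this to the parabolic setting I would use the convexity of $g$: since $\big(g(u/u^*)\big)_{xx}\geq g'(u/u^*)\,u_{xx}/u^*$ and likewise for $w$, the function $t\mapsto\Phi\big(u(\cdot,t),w(\cdot,t)\big)$ is a nonnegative, bounded, entire-in-time subsolution of a variable-coefficient linear parabolic equation whose zeroth-order coefficient equals $-\beta<0$. From this differential inequality the conclusion $\Phi\equiv 0$ follows exactly as in \cite[Lemma~4.1]{DGGS} --- heuristically, $e^{\beta t}\Phi$ is then a subsolution that is bounded on each backward half-cylinder and vanishes as $t\to-\infty$, so a comparison argument forces it to be identically zero. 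Since $g$ vanishes only at $1$, $\Phi\equiv 0$ means $u\equiv u^*$ and $w\equiv w^*$, and the reduction noted above then gives $v\equiv v_*$, $w\equiv w_*$ for the companion subsystem.

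The one step that genuinely needs care is this last passage from the pointwise inequality $L_X\Phi\leq-\beta\Phi$ to $\Phi\equiv 0$ for entire solutions, together with the book-keeping in the convexity estimate when the two diffusivities $d_1$ and $d_3$ differ; both are handled exactly as in the cited reference, and the preceding algebra is in fact strictly lighter than in Lemma~\ref{LE-entire-full}, since the quadratic form appearing in $L_X\Phi$ is already diagonal.
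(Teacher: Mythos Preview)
Your proposal is correct and follows exactly the Lyapunov approach that the paper invokes: the paper itself omits the proof and refers to \cite[Lemma~4.2]{DGGS} (see also \cite{DH04,DG18}) for precisely this argument. Your computation of $L_X\Phi$, the observation that the cross term cancels (so no analogue of \eqref{L-extra} is needed), and the reduction to \cite[Lemma~4.1]{DGGS} for the entire-in-time conclusion all match the cited treatment.
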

\begin{proof}
The proof of this lemma can be found in \cite[Lemma 4.2]{DGGS} by a Lyapunov argument (see also \cite{DH04,DG18}). We omit it here.
\end{proof}

\subsection{The case of the alien strong competitor}

Now we turn to the case when the strong competing prey is the alien species, and in particular we assume here that $\b^*>0$, i.e., \eqref{positive} holds.
Now $(\phi_1, \phi_2, \phi_3)$ denotes the traveling wave solution constructed in Subsection~\ref{vv}.
In order to apply either method of contracting rectangles (see~\cite{GNOW20}) or Lyapunov function (see Lemma~\ref{LE-entire-full} above), positive lower bounds on the traveling wave solutions are typically required.

We already know by Lemma~\ref{lem:liminf1_ter3} that $\phi_3^- >0$. We therefore continue the proof of the stable tail limit by obtaining some better lower bounds for $\phi_2$.
Because $\phi_1$ only persists when the invading state is $E_c$, it shall be considered separately in Subsection~\ref{sec:coexist_limit}.

\begin{lemma}\label{lem:liminf1_ter2}
It holds that $\phi_2^- = \liminf_{z \to \infty} \phi_2 (z) >0$.
\end{lemma}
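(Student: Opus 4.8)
The plan is to argue by contradiction, following the sequential persistence-type scheme already used in Lemmas~\ref{lem:liminf1_ter12}--\ref{lem:liminf1_ter3}. Assume $\phi_2^- = 0$, so there is a sequence $z_n\to\infty$ with $\phi_2(z_n)\to 0$. Fix a small $\epsilon>0$ and introduce the backward exit times
\[
z_n' := \inf\{\, z\le z_n \ | \ \forall z'\in(z,z_n),\ \phi_2(z')\le\epsilon \,\},
\]
so that $\phi_2(z_n')=\epsilon$ and $\phi_2\le\epsilon$ on $[z_n',z_n]$. As before, a limit argument and the strong maximum principle (the local limit of $(\phi_1,\phi_2,\phi_3)(\cdot+z_n)$ solves~\eqref{TWS} and has a vanishing second component) give $z_n-z_n'\to\infty$; passing to the limit along $(\phi_1,\phi_2,\phi_3)(\cdot+z_n')$ then yields a solution $(\hat\phi_1^\epsilon,\hat\phi_2^\epsilon,\hat\phi_3^\epsilon)$ of~\eqref{TWS} with $\hat\phi_2^\epsilon(0)=\epsilon$ and $0\le\hat\phi_2^\epsilon\le\epsilon$ on $[0,\infty)$. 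Arguing as in the previous lemmas and using Lemmas~\ref{lem:liminf1_ter12} and~\ref{lem:liminf1_ter3}, I may further reduce $\epsilon$ so that $\hat\phi_1^\epsilon\ge\delta_{12}/2$ and $\hat\phi_3^\epsilon\ge\phi_3^-/2$ on $[0,\infty)$.

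The central step will be to show that $(\hat\phi_1^\epsilon,\hat\phi_3^\epsilon)$ approaches $(u^*,w^*)$ at $+\infty$, uniformly as $\epsilon\to 0$: there exist $\delta(\epsilon)\to 0$ and $z_\epsilon$ such that $|\hat\phi_1^\epsilon(z)-u^*|+|\hat\phi_3^\epsilon(z)-w^*|\le\delta(\epsilon)$ for all $z\ge z_\epsilon$. Mimicking the analogous claims in the earlier lemmas, I would prove this by a second contradiction-and-extraction argument: a failing pair of sequences $\epsilon_n\to 0$, $y_n\to\infty$ would, after shifting by $y_n$ and using $\hat\phi_2^{\epsilon_n}\to 0$, produce in the limit a bounded entire-in-time solution $(u,w)$ of the two-species predator--prey system~\eqref{2_predatorprey} satisfying $\delta_{12}/2\le u\le 1$ and $\phi_3^-/2\le w\le 2a-1$, which by Lemma~\ref{prelim_predatorprey} must be identically $(u^*,w^*)$, contradicting the choice of $y_n$.

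To conclude, I would pass to the first-order ODE system~\eqref{TWS_first}. Standard elliptic estimates give $|\hat\phi_2^\epsilon|+|(\hat\phi_2^\epsilon)'|=O(\epsilon)$ away from the origin, while elliptic estimates upgrade the previous bound to $|\hat\phi_1^\epsilon-u^*|+|(\hat\phi_1^\epsilon)'|+|\hat\phi_3^\epsilon-w^*|+|(\hat\phi_3^\epsilon)'|=O(\delta(\epsilon)+\epsilon)$ for large $z$. Hence, since $\beta^*>0$, for $\epsilon$ small the trajectory $(\hat\phi_1^\epsilon,(\hat\phi_1^\epsilon)',\hat\phi_2^\epsilon,(\hat\phi_2^\epsilon)',\hat\phi_3^\epsilon,(\hat\phi_3^\epsilon)')$ lies in the neighborhood $W^*$ of $(u^*,0,0,0,w^*,0)$ provided by Lemma~\ref{ODE_lemma1_ter1} for all sufficiently large $z$; that lemma (after a translation) then forces $\hat\phi_2^\epsilon\equiv 0$, contradicting $\hat\phi_2^\epsilon(0)=\epsilon>0$, and the proof is complete.

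I expect the main obstacle to be the intermediate claim of the second paragraph: this is the place where the instability of $E^*$ (through $\beta^*>0$) genuinely enters, and it rests on both the classification of entire solutions of the two-species predator--prey system (Lemma~\ref{prelim_predatorprey}) and the earlier positive lower bounds on $\phi_1$ and $\phi_3$, so the ordering of the lemmas is essential. A minor technical nuisance is the degenerate situation in which $\phi_2$ additionally tends to $0$ as $z\to\infty$, where the exit-time construction collapses and one cannot force $z_n'\to\infty$; in that case one argues directly that $(\phi_1,\phi_3)(z)\to(u^*,w^*)$ and applies Lemma~\ref{ODE_lemma1_ter1} to the original (shifted) wave, the contradiction now coming from the positivity of $\phi_2$ on $\R$.
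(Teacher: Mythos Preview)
Your proposal is correct and follows essentially the same route as the paper: the contradiction setup with backward exit times, the extraction of a limiting solution $(\hat\phi_1^\epsilon,\hat\phi_2^\epsilon,\hat\phi_3^\epsilon)$ with $\hat\phi_2^\epsilon$ small on $[0,\infty)$, the reduction to the two-species predator--prey system via Lemma~\ref{prelim_predatorprey} to force $(\hat\phi_1^\epsilon,\hat\phi_3^\epsilon)\to(u^*,w^*)$, and the final appeal to Lemma~\ref{ODE_lemma1_ter1}. The only cosmetic differences are that the paper invokes Lemma~\ref{lem:liminf1_ter23} (rather than Lemma~\ref{lem:liminf1_ter3}) for the lower bound on $\hat\phi_3^\epsilon$, and that your worry about the degenerate case $\phi_2(z)\to 0$ is unnecessary: in that situation $z_n'$ stays bounded but $z_n-z_n'\to\infty$ still holds, so the same extraction works verbatim.
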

\begin{proof}
We again proceed by contradiction and, as in the proof of Lemma~\ref{lem:liminf1_ter3}, for any small $\varepsilon >0$
we find sequences $\{z_n\}_{n \in \mathbb{N}}$ and $\{ z_n ' \}_{n \in \mathbb{N}}$ such that $z_n - z_n '\to \infty$, $\phi_2 (z_n')=\epsilon$ and $\phi_2  \leq \varepsilon$ in $[z_n ', z_n]$.

By standard elliptic estimates, we find that $(\phi_1, \phi_2, \phi_3) (\cdot + z'_n)$ converges to another solution of \eqref{TWS},
which we denote by $(\hat{\phi}_1^\epsilon, \hat{\phi}_2^\epsilon, \hat{\phi}_3^\epsilon)$. Furthermore, by construction we have that $\hat{\phi}_2^\e (0) = \epsilon$ and $\hat{\phi}_2^\e \leq \epsilon$ for all $z \geq 0$.
Also, due to Lemmas~\ref{lem:liminf1_ter12} and~\ref{lem:liminf1_ter23},
there exist some $\delta_0 >0$ independent of $\e$ such that $\hat{\phi}_1^\e(z)$, $\hat{\phi}_3^\e(z) \geq \delta_0$ for all $z \geq 0$.

We then claim that there exist $z_\e$ and $\delta (\epsilon) \to 0$ as $\epsilon \to 0$ such that
\be\label{claim0_ter}
|\hat{\phi}_1^\e (z )  - u^* | + | \hat{\phi}_3^\e (z) - w^*| \leq \delta (\epsilon),\; \forall\, z \geq z_\e.
\ee
Indeed, proceed by contradiction and assume that there exists $z^\epsilon \to \infty $ such that for instance $\liminf | \hat{\phi}_1^\e ( z^\epsilon ) - u^*| >0$ when (some subsequence of) $\epsilon \to 0$.
By parabolic estimates and up to extraction of another subsequence, we get that $(\hat{\phi}_1^\epsilon, \hat{\phi}_2^\epsilon, \hat{\phi}_3^\epsilon) (\cdot + z^\e)$ converges to a solution $(\hat{\phi}_1^0, \hat{\phi}_2^0 , \hat{\phi}_3^0)$ of \eqref{TWS}. Moreover, we have by construction that $\hat{\phi}_2^0 \equiv 0$, and therefore $(\hat{\phi}_1^0 , \hat{\phi}_3^0) (x + st)$ is an entire in time solution of~\eqref{2_predatorprey}. It also follows from our construction that $0 <\delta_0 \leq \hat{\phi}_1^0 \leq 1$ and $0 < \delta_0 \leq \hat{\phi}_3^0 \leq 2a -1$, hence from Lemma~\ref{prelim_predatorprey} that $\hat{\phi}_1^0 \equiv u^*$ and $\hat{\phi}_3^0 \equiv w^*$. We have reached a contradiction and proved the claim~\eqref{claim0_ter}.

Therefore, we can choose $\epsilon>0$ small enough so that $(\hat{\phi}_1^\epsilon,( \hat{\phi}_1^\epsilon)  ' , \hat{\phi}_2^\epsilon , (\hat{\phi}_2^\epsilon) ', \hat{\phi}_3^\epsilon,  (\hat{\phi}_3 ^\epsilon )') \in  W^*$, for all $z \geq z_\e$. Since $\beta^* >0$ and applying Lemma~\ref{ODE_lemma1_ter1}, we conclude that $\hat{\phi}_2^\epsilon \equiv 0$. This contradicts the fact that $\hat{\phi}_2^\epsilon (0 ) =\epsilon >0$, and ends the proof of the lemma.
\end{proof}

\subsubsection{Semi-co-existence case}

This subsection is devoted to showing that the traveling wave solution obtained in Subsection~\ref{vv} approaches $E_{*} = (0,v_* , w_*)$ as $z\to\infty$, if $\b_*<0$ and~\eqref{hb2} holds.

Recall that
\beaa
\phi_j^{+} :=\limsup_{z\rightarrow\infty} \phi_j(z), \; \phi_j^{-} :=\liminf_{z\rightarrow\infty} \phi_j(z),\; j=1,2,3,
\eeaa
Unfortunately, the implicit lower bound in Lemma~\ref{lem:liminf1_ter2} on $\phi_2^-$ is not enough for our purpose. We immediately improve it:
\begin{lemma}\label{p2-lower}
It holds that $\phi_2^- \ge 1-h-b_2(2a-1):=\gamma_2$.
\end{lemma}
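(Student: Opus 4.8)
The plan is to read off the desired bound directly from the second equation of \eqref{TWS}, evaluated along a limit profile at a point where $\phi_2$ achieves its infimum at $+\infty$, combining this with the a priori bounds $\phi_1\le 1$, $\phi_3\le 2a-1$ from \eqref{ul-bound} and the strict positivity $\phi_2^->0$ from Lemma~\ref{lem:liminf1_ter2}.

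First I would choose a sequence $z_n\to+\infty$ with $\phi_2(z_n)\to\phi_2^-$. Since the wave components are uniformly bounded by \eqref{ul-bound}, the nonlinear terms in \eqref{TWS} are uniformly bounded, so by standard elliptic (or ODE) estimates and a diagonal extraction on an exhaustion of $\R$ by compact intervals, the shifted profiles $(\phi_1,\phi_2,\phi_3)(\cdot+z_n)$ converge, up to a subsequence, in $C^2_{loc}(\R)$ to an entire solution $(\tilde\phi_1,\tilde\phi_2,\tilde\phi_3)$ of \eqref{TWS} still satisfying $0\le\tilde\phi_1,\tilde\phi_2\le 1$ and $0\le\tilde\phi_3\le 2a-1$. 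Because $z_n+z\to+\infty$ for every fixed $z$, we get $\tilde\phi_2(z)\ge\phi_2^-$ for all $z$, with equality at $z=0$. Hence $z=0$ is a global minimum of the smooth function $\tilde\phi_2$, so that $\tilde\phi_2'(0)=0$ and $\tilde\phi_2''(0)\ge 0$.

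Next I would evaluate the second equation of \eqref{TWS} for $(\tilde\phi_1,\tilde\phi_2,\tilde\phi_3)$ at $z=0$:
$$d_2\tilde\phi_2''(0)-s\tilde\phi_2'(0)+r_2\phi_2^-\left(1-h\tilde\phi_1(0)-\phi_2^--b_2\tilde\phi_3(0)\right)=0.$$
Since $d_2\tilde\phi_2''(0)\ge 0$, $\tilde\phi_2'(0)=0$, and $r_2\phi_2^->0$ thanks to Lemma~\ref{lem:liminf1_ter2}, this forces $1-h\tilde\phi_1(0)-\phi_2^--b_2\tilde\phi_3(0)\le 0$, i.e. $\phi_2^-\ge 1-h\tilde\phi_1(0)-b_2\tilde\phi_3(0)$. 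Using $\tilde\phi_1(0)\le 1$ and $\tilde\phi_3(0)\le 2a-1$ then yields $\phi_2^-\ge 1-h-b_2(2a-1)=\gamma_2$, which is the claim.

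There is no serious obstacle here. The only points requiring care are the compactness argument producing the entire solution $(\tilde\phi_1,\tilde\phi_2,\tilde\phi_3)$ and the observation that $\phi_2^-$ is genuinely attained as an interior minimum of the limit profile (so that the first- and second-order minimality conditions are available), both routine given the uniform bounds \eqref{ul-bound}. The essential input is the strict positivity $\phi_2^->0$ from Lemma~\ref{lem:liminf1_ter2}: it is precisely what allows one to divide out $r_2\phi_2^-$ and convert the sign of $\tilde\phi_2''(0)$ into the stated lower bound on $\phi_2^-$.
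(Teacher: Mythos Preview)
Your proof is correct and follows essentially the same approach as the paper: both pass to a limit profile via elliptic estimates along a sequence $z_n\to\infty$, use the a priori bounds $\phi_1\le 1$, $\phi_3\le 2a-1$ from \eqref{ul-bound}, and rely on the positivity $\phi_2^->0$ from Lemma~\ref{lem:liminf1_ter2}. The only minor difference is in the concluding step: the paper records the resulting differential inequality $d_2\tilde\phi_2''-s\tilde\phi_2'+r_2\tilde\phi_2[1-h-\tilde\phi_2-b_2(2a-1)]\le 0$ and appeals to the comparison principle, whereas you evaluate the equation at the interior minimum of $\tilde\phi_2$ and read off the bound directly from $\tilde\phi_2'(0)=0$, $\tilde\phi_2''(0)\ge 0$ --- a slightly more hands-on but entirely equivalent device.
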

\begin{proof}
The proof is the same as that of Lemma 4.1 in \cite{GNOW20}.
Take any sequence $z_n \to \infty$, and by elliptic estimates assume up to extraction of a subsequence that $\phi_2 (\cdot + z_n) \to \tilde{\phi}_2$ as $n \to \infty$.
Then $\tilde{\phi}_2 \geq  \phi_2^-  >0$ and, by \eqref{ul-bound}, it satisfies
$$d_2 \tilde{\phi}_2 '' - s \tilde{\phi}_2 ' + r_2 \tilde{\phi}_2 [ 1 - h  - \tilde{\phi}_2 - b_2 (2a -1) ] \leq 0.$$
On the other hand, the solution of the ODE
$$\partial_t \underline{u} = r_2 \underline{u} [ 1-h - \underline{u} - b_2 (2a -1) ],$$
with initial condition
$$\underline{u} (t=0) = \phi_2^- >0 ,$$
converges to $\gamma_2$ as $t \to +\infty$. Since $\tilde{\phi}_2 \geq \phi_2^-$, we can apply the parabolic comparison principle on the whole real line, and we find that $\tilde{\phi}_2 \geq \gamma_2$. Due to the arbitrary choice of the sequence~$z_n$, we conclude as wanted that $\phi_2^- = \gamma_2$.
\end{proof}

Now define
\beaa
\begin{cases}
 m_2(\ta) := (1-\ta)(\gamma_2-\e) + \ta v_{*}, \quad M_2(\ta) := (1-\ta)(1+\e^2) + \ta v_{*},\\
 m_3(\ta) := (1-\ta)(\delta_3 -\e) + \ta w_{*}, \quad M_3(\ta) := (1-\ta)(2a-1+\e)+ \ta w_{*},
\end{cases}
\eeaa
where $\gamma_2= 1 - h - b_2 (2a -1) >0$, $\delta_3 := \min \left\{ {w_*}/{2}, {(a\gamma_2 -1)}/{2} , \phi_3^-  \right\}>0$, and $\e$ satisfies
\be\label{seps}
0<\e<\min{ \left\{\gamma_2, \delta_3, \frac{hk\gamma_2+hb_1\delta_3}{hk+hb_1+b_2}, \frac{ak\gamma_2+ab_1\delta_3}{ak+ab_1+1}, \frac{a\gamma_2- \delta_3 - 1}{a}\right\} }.
\ee
Notice that $a \gamma_2 -1 >0$ follows from~\eqref{hb2}, which in turn ensures together with Lemma~\ref{lem:liminf1_ter3} that such constants~$\delta_3$ and~$\varepsilon$ indeed exist.
Also, due to $a>1$ and the definition of $\delta_3$, we have that $0<\gamma_2<v_*<1$ and $\delta_3<w_*<2a-1$.
Hence $m_j(\theta)$ is increasing and $M_j(\theta)$ is decreasing in $\theta\in[0,1]$ for $j=2,3$.

Due to the lack of a positive lower bound for $\phi_1$, instead of considering 3-d rectangles, we consider the following 2-d contracting rectangles:
\be\label{recQ}
Q(\theta):=[m_2(\theta),M_2(\theta)]\times [m_3(\theta),M_3(\theta)]\subset (0,\infty)^2, \quad \theta\in[0,1].
\ee
Also, we consider the set
\be\label{setA}
\mathcal{A}:=\{\theta\in[0,1)\mid m_k(\ta)<\phi_k^{-}\leq\phi_k^{+}<M_k(\ta),\, k=2,3\}.
\ee
Obviously, by \eqref{ul-bound} and Lemma~\ref{p2-lower}, we have
\beaa
&& m_2(0)=\gamma_2-\e <\gamma_2\leq \phi_2^{-}\leq \phi_2^{+}\leq 1<1+\e^2     = M_2(0),\\
&& m_3(0)=\delta_3-\e <\delta_3\leq \phi_3^{-}\leq \phi_3^{+}\leq 2a-1<2a-1+\e = M_3(0).
\eeaa
Hence $0\in\mathcal{A}$ and also the quantity $\ta_0 :=\sup\mathcal{A}$ is well-defined such that $\ta_0\in(0,1]$.

By passing to the limit, we have
\be\label{p23-bound}
m_j(\ta_0)\le\phi_j^-\le\phi_j^+\le M_j(\ta_0),\quad j=2,3.
\ee
To proceed further, we derive a better upper bound for $\phi_1$ as follows.

\begin{lemma}\label{la:p1+}
Under the condition \eqref{p23-bound}, it holds that $$\phi_1^+\le M_1(\ta_0) := \max \{0, 1-k m_2(\ta_0)- b_1 m_3(\ta_0)\}.$$
\end{lemma}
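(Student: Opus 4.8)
The plan is to derive an upper bound for $\phi_1^+$ directly from the first equation of \eqref{TWS} by a sliding/comparison argument at $+\infty$, using the already-established lower bounds on $\phi_2$ and $\phi_3$. Fix any small $\eta>0$. By \eqref{p23-bound} together with the definitions of $m_j(\theta_0)$, there exists $z_\eta$ such that $\phi_2(z)\ge m_2(\theta_0)-\eta$ and $\phi_3(z)\ge m_3(\theta_0)-\eta$ for all $z\ge z_\eta$. Plugging these into the first line of \eqref{TWS}, for $z\ge z_\eta$ we get
\be\label{p1ineq}
d_1\phi_1''(z)-s\phi_1'(z)+r_1\phi_1(z)\big[1-\phi_1(z)-k(m_2(\theta_0)-\eta)-b_1(m_3(\theta_0)-\eta)\big]\ge 0.
\ee
This says $\phi_1$ is a (bounded, positive) subsolution of the scalar logistic-type equation with growth term $r_1\phi_1[\,c_\eta-\phi_1\,]$ where $c_\eta:=1-k m_2(\theta_0)-b_1 m_3(\theta_0)+(k b_1+b_1)\eta$ — wait, more precisely $c_\eta = 1-km_2(\theta_0)-b_1 m_3(\theta_0)+(k+b_1)\eta$.

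The core step is then a standard comparison argument at $+\infty$ for scalar equations: if $\phi_1$ is bounded and satisfies \eqref{p1ineq} on $[z_\eta,\infty)$, then $\limsup_{z\to\infty}\phi_1(z)\le \max\{0,c_\eta\}$. The cleanest way is to take any sequence $z_n\to\infty$ realizing $\phi_1^+$, translate, and extract via elliptic estimates a limit $\tilde\phi_1$ which is an entire-in-time subsolution (bounded between $0$ and $1$) of $u_t=d_1u_{xx}+r_1u(c_\eta-u)$ in the travelling frame, attaining its supremum $\phi_1^+$ at the origin; the strong maximum principle (or a direct ODE phase-plane argument on the profile equation $d_1\tilde\phi_1''-s\tilde\phi_1'+r_1\tilde\phi_1(c_\eta-\tilde\phi_1)\ge 0$) then forces $\phi_1^+\le\max\{0,c_\eta\}$. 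Since $\phi_1^+\le 1$ always, if $c_\eta<0$ we directly conclude $\phi_1^+$ is dominated by $0$ in the sense that no positive lower obstruction arises — more carefully, if $c_\eta\le 0$ then $r_1\tilde\phi_1(c_\eta-\tilde\phi_1)<0$ wherever $\tilde\phi_1>0$, and evaluating \eqref{p1ineq}-type inequality at an interior maximum gives a contradiction unless $\tilde\phi_1\equiv 0$; so in all cases $\phi_1^+\le\max\{0,c_\eta\}$. Finally, letting $\eta\to 0$ yields $\phi_1^+\le\max\{0,\,1-km_2(\theta_0)-b_1 m_3(\theta_0)\}=M_1(\theta_0)$, which is the claim.

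The main obstacle — really the only delicate point — is justifying the passage from the differential inequality \eqref{p1ineq} to the bound on the $\limsup$ when $c_\eta$ could a priori be positive: one must be careful that $\phi_1$ is only a subsolution (not a solution) of the scalar problem, so a naive sub/supersolution sandwich does not immediately apply, and the coefficients depend on $z$ through the error terms. The translation-and-limit device circumvents this by producing a genuine entire subsolution with constant coefficient $c_\eta$, after which either the strong maximum principle at the interior maximum or a comparison with the constant supersolution $\max\{0,c_\eta\}$ finishes the argument; both are routine once the limit object is in hand. I would also note that this is exactly the type of argument already used for Lemma~\ref{p2-lower} (invoked there as ``the same as Lemma 4.1 in \cite{GNOW20}''), so the write-up can be kept short by referencing that parallel structure. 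One should just double-check that the error terms $(k+b_1)\eta$ enter with the correct sign so that $c_\eta$ decreases to $1-km_2(\theta_0)-b_1m_3(\theta_0)$ as $\eta\to 0^+$, which it does since we are lowering the lower bounds on $\phi_2,\phi_3$.
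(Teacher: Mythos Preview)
Your proof is correct and follows essentially the same route as the paper's: translate along a sequence $z_n\to\infty$, extract a limit via elliptic estimates, and conclude by a comparison/maximum principle against the constant $\max\{0,1-km_2(\theta_0)-b_1 m_3(\theta_0)\}$. The only difference is that your $\eta$-layer is unnecessary --- once you pass to the limit along $z_n\to\infty$, any subsequential limits of $\phi_2,\phi_3$ already lie above $m_2(\theta_0),m_3(\theta_0)$ by \eqref{p23-bound}, so the limit object $\tilde\phi_1$ directly satisfies the inequality with coefficient $1-km_2(\theta_0)-b_1 m_3(\theta_0)$ and the final $\eta\to 0$ step can be dropped.
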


\begin{proof}
Taking any sequence $\{z_n\}_{n \in \mathbb{N}}$ tending to $\infty$, up to extraction of a subsequence, we have $\phi_1(\cdot+z_n)\to\hat{\phi}_1$ as $n\to\infty$. It follows from \eqref{p23-bound} that
\beaa
d_1\hat{\phi}_1''-s\hat{\phi}_1'+r_1\hat{\phi}_1[1-\hat{\phi}_1-km_2(\theta_0)-b_1m_3(\ta_0)]\ge 0,
\eeaa
on the whole real line. Recalling also that $\phi_1 \leq 1$ and applying the parabolic comparison principle, we get that $\hat{\phi}_1 (z) \le \overline{u} (t)$ for any $z \in \mathbb{R}$ and $t>0$, where $\overline{u}$ solves
$$\partial_t \overline{u} = r_1 \overline{u} [ 1- \overline{u} - km_2 (\theta_0 ) - b_1 m_3 (\theta_0)],$$
with initial condition
$$\overline{u} (t=0) = 1.$$
One may check that $\overline{u} (t) \to M_1 (\theta_0)$ as $t \to +\infty$, hence $\hat{\phi}_1 \leq M_1 (\theta_0)$. Due to the arbitrary choice of the sequence~$z_n$, we reach the wanted conclusion.
\end{proof}

Next, we prove that $\ta_0=1$. We assume by contradiction that $\ta_0\in(0,1)$. In particular, one of the following equalities must hold:
\be\label{equal}
\phi_j^-=m_j(\ta_0),\; \phi_j^+=M_j(\ta_0),\; j=2,3.
\ee
To reach a contradiction with \eqref{equal}, we introduce and compute
\beaa
\alpha_2 & := & 1-hM_1(\ta_0)-m_2(\ta_0)-b_2M_3(\ta_0) ,\\
\omega_2 &:=&  1-M_2(\ta_0)-b_2m_3(\ta_0)  = -(1-\ta_0)[\e^2+b_2(\d_3-\e)]<0,\\
\alpha_3 &:=& -1 +am_2(\ta_0) -m_3(\ta_0) =(1-\ta_0)[a\gamma_2-1-\delta_3-\e(a-1)]>0,\\
\omega_3 &:=& -1 +aM_1(\ta_0) +aM_2(\ta_0) -M_3(\ta_0).
\eeaa
We also compute that
$$\alpha_2 > 0,$$
and to do so we distinguish the two cases when $M_1 (\theta_0) >0$ or $M_1 (\theta_0) =0$. In the former, we have
\beaa
\alpha_2 
& =& (1-h)+(1-\ta_0)[-(1-hk)\gamma_2+hb_1\delta_3-b_2(2a-1)]\\
&&+\ta_0[(hk-1)v_*+hb_1w_*-b_2w_*]-(1-\ta_0)\e[-(1-hk)+hb_1+b_2]\\
&=&( 1-h) + (1-\ta_0)[-(1-hk)\gamma_2+hb_1\delta_3-b_2(2a-1)]\\
&&+\ta_0[h(1-\beta_*)-1]-(1-\ta_0)\e[-(1-hk)+hb_1+b_2]\\
&=&(1-\ta_0)\{(hk\gamma_2+hb_1\delta_3)-\e(-1+hk+hb_1+b_2)\}-\ta_0h\beta_*,
\eeaa
using $\gamma_2 = 1-h - b_2 (2a -1)$, $v_*+b_2w_*=1$ and $\beta_*=1-kv_*-b_1w_*$.
Since $\beta_* < 0$, it easily follows from \eqref{seps} that $\alpha_2 >0$ in that case.
In the other case when $M_1 (\theta_0) = 0$, we have
\beaa
\alpha_2 = (1-\theta)[h+(1-b_2)\e] >0.
\eeaa
Similarly, when $M_1(\ta_0)>0$, we have
\beaa
\omega_3 =  -(1-\ta_0)[(ak\gamma_2+ab_1\delta_3)-\e(ak+ab_1+1-a\e)]+a\ta_0\beta_*<0,
\eeaa
using $\beta_*<0$ and \eqref{seps}. On the other hand, $\omega_3=-(1-\ta_0)(a+\e-a\e^2)<0$ when $M_1(\ta_0)=0$.

{From these inequalities, we can get a contradiction following an argument given in \cite{GNOW20}. For instance, if $\phi_2^- =m_2 (\ta_0)$ in \eqref{equal}, then there exists a sequence $z_n \to \infty$ such that $(\phi_1, \phi_2, \phi_3) (\cdot + z_n)$ converges to a solution $(\phi_{1,\infty}, \phi_{2,\infty}, \phi_{3,\infty})$ of~\eqref{TWS} such that
\beaa
&&0 \leq \phi_{1,\infty} \leq M_1 (\ta_0), \quad m_3 (\ta_0) \leq \phi_{3, \infty} \leq M_3 (\ta_0),\\
&&\phi_{2, \infty} \geq \phi_{2,\infty} (0) = m_2 (\ta_0) >0.
\eeaa
Evaluating the equation for $\phi_{2,\infty}$ at 0, one finds a contradiction with the fact that $\alpha_2 >0$. Other cases can be dealt with similarly.}

Hence $\theta_0=1$. This implies that $\phi_2^-=\phi_2^+=v_*$ and $\phi_3^-=\phi_3^+=w_*$, i.e.,
\be\label{p23}
\lim_{z\to\infty}\phi_2(z)=v_*,\quad \lim_{z\to\infty}\phi_3(z)=w_*.
\ee
Finally, applying again Lemma~\ref{la:p1+} with $\theta_0 =1$ and recalling that $1 - k v_* - b_1 w_* < 0$, we also conclude that $\phi_1^- = \phi_1^+ = 0$. The proof is now completed.

%
%
%
%
%

\subsubsection{Co-existence case}\label{sec:coexist_limit}

In this subsection, we show that the traveling wave solutions obtained in Subsection~\ref{vv} converge to $E_c$ as $z\to\infty$, if \eqref{co-ex} and \eqref{ODE_lyapu} hold. Recall that in particular $\beta_* >0$; see the discussion in Subsection~\ref{sec:ODE}.

Since the method of contracting rectangles is not applicable in this case, we switch to a Lyapunov argument. However, in applying this method, we still need to derive some positive lower bounds for all components. The positive lower bound of $\phi_1$ is derived by a similar argument as that in Lemma~\ref{lem:liminf1_ter2}.

\begin{lemma}\label{lem:liminf1_bis}
It holds that $\phi_1^- = \liminf_{z \to \infty} \phi_1 (z) >0$.
\end{lemma}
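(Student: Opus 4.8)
The plan is to adapt the sequential/persistence argument used repeatedly in this section — in particular the structure of the proof of Lemma~\ref{lem:liminf1_ter2} — but now working in the co-existence regime where we additionally know $\phi_3^- >0$ (Lemma~\ref{lem:liminf1_ter3}), $\phi_2^- >0$ (Lemma~\ref{lem:liminf1_ter2}), and the improved bound $\phi_2^- \ge \gamma_2$ (Lemma~\ref{p2-lower}). First I would argue by contradiction: if $\phi_1^- = 0$, then for any small $\varepsilon >0$ one extracts sequences $z_n, z_n'$ with $z_n - z_n' \to \infty$, $\phi_1(z_n') = \varepsilon$ and $\phi_1 \le \varepsilon$ on $[z_n', z_n]$. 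By elliptic estimates $(\phi_1,\phi_2,\phi_3)(\cdot + z_n')$ converges (up to a subsequence) to a solution $(\hat\phi_1^\varepsilon, \hat\phi_2^\varepsilon, \hat\phi_3^\varepsilon)$ of \eqref{TWS} with $\hat\phi_1^\varepsilon(0) = \varepsilon$, $\hat\phi_1^\varepsilon \le \varepsilon$ on $[0,\infty)$, and — by Lemmas~\ref{lem:liminf1_ter23} and~\ref{lem:liminf1_ter3} — with $\hat\phi_2^\varepsilon, \hat\phi_3^\varepsilon$ bounded below by some $\delta_0 >0$ independent of $\varepsilon$ for all $z \ge 0$.

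Next I would establish the key claim: there exist $z_\varepsilon$ and $\delta(\varepsilon) \to 0$ as $\varepsilon \to 0$ such that $|\hat\phi_2^\varepsilon(z) - v_*| + |\hat\phi_3^\varepsilon(z) - w_*| \le \delta(\varepsilon)$ for all $z \ge z_\varepsilon$. This is proved by a further contradiction/compactness argument exactly as in the claim~\eqref{claim0_ter} inside Lemma~\ref{lem:liminf1_ter2}: if not, one extracts $z^\varepsilon \to \infty$ along which $(\hat\phi_1^\varepsilon, \hat\phi_2^\varepsilon, \hat\phi_3^\varepsilon)(\cdot + z^\varepsilon)$ converges to a solution $(\hat\phi_1^0, \hat\phi_2^0, \hat\phi_3^0)$ of \eqref{TWS} with $\hat\phi_1^0 \equiv 0$ (since $\hat\phi_1^\varepsilon \le \varepsilon$); then $(\hat\phi_2^0, \hat\phi_3^0)(x+st)$ is a bounded entire solution of the $u=0$ subsystem of~\eqref{main} with $\inf \hat\phi_2^0, \inf \hat\phi_3^0 \ge \delta_0 >0$, so by the second part of Lemma~\ref{prelim_predatorprey} it must be $(v_*, w_*)$, contradicting the failure of the estimate and proving the claim.

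Then, for $\varepsilon$ small enough, the resulting solution of \eqref{TWS} satisfies $(\hat\phi_1^\varepsilon, (\hat\phi_1^\varepsilon)', \hat\phi_2^\varepsilon, (\hat\phi_2^\varepsilon)', \hat\phi_3^\varepsilon, (\hat\phi_3^\varepsilon)') \in W_*$ for all $z \ge z_\varepsilon$ — here I use that $\beta_* >0$ in the co-existence case, which is precisely the hypothesis of Lemma~\ref{ODE_lemma1_ter1}. Applying that lemma (after the shift $z \to z - z_\varepsilon$) forces $\hat\phi_1^\varepsilon \equiv 0$, contradicting $\hat\phi_1^\varepsilon(0) = \varepsilon >0$. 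This completes the proof, and so $\phi_1^- >0$.

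The main obstacle I anticipate is the second step, the convergence claim $|\hat\phi_2^\varepsilon - v_*| + |\hat\phi_3^\varepsilon - w_*| \le \delta(\varepsilon)$: one must be careful that along the extracted limit the first component genuinely vanishes identically (this is what allows the reduction to the two-species predator-prey subsystem and the use of Lemma~\ref{prelim_predatorprey}), and that the uniform lower bound $\delta_0$ on $\hat\phi_2^\varepsilon, \hat\phi_3^\varepsilon$ survives the second limit so that the nondegeneracy hypothesis of Lemma~\ref{prelim_predatorprey} applies. Everything else is a routine repetition of the arguments already carried out in Lemmas~\ref{lem:liminf1_ter23}, \ref{lem:liminf1_ter3} and~\ref{lem:liminf1_ter2}, with the roles of the components permuted.
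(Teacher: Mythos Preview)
Your proposal is correct and follows essentially the same route as the paper's proof: the contradiction argument, the claim that $(\hat\phi_2^\varepsilon, \hat\phi_3^\varepsilon)$ is close to $(v_*, w_*)$ via Lemma~\ref{prelim_predatorprey}, and the final application of Lemma~\ref{ODE_lemma1_ter1} using $\beta_* >0$ are exactly as in the paper. One small correction: the uniform lower bound on $\hat\phi_2^\varepsilon$ should be drawn from Lemma~\ref{lem:liminf1_ter2} (which you correctly cite in your opening paragraph), not from Lemma~\ref{lem:liminf1_ter23}, since the latter only controls the sum $\phi_2+\phi_3$.
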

\begin{proof}
By contradiction, we find for any small $\varepsilon >0$ a solution $(\hat{\phi}_1^\epsilon, \hat{\phi}_2^\epsilon, \hat{\phi}_3^\epsilon)$ of \eqref{TWS}, such that that $\hat{\phi}_1^\e (0) = \epsilon$ and $\hat{\phi}_1^\e \leq \epsilon$ for all $z \geq 0$. Furthermore, by Lemmas~\ref{lem:liminf1_ter3} and~\ref{lem:liminf1_ter2}, there exists $\delta >0$ such that $\hat{\phi}_2^\e , \hat{\phi}_3^\e \geq \delta$ on $[0,+\infty)$.

As in the proof of Lemma~\ref{lem:liminf1_ter2} and more specifically of~\eqref{claim0_ter}, one can then use Lemma~\ref{prelim_predatorprey} to infer that
\beaa
|\hat{\phi}_2^\e (z )  - v_* | + | \hat{\phi}_3^\e (z) - w_*| \leq \delta (\epsilon),\; \forall\, z \geq z_\e,
\eeaa
where $z_\e \geq 0$ and $\delta (\e) \to 0$ as $\e \to 0$. Finally, due to $\beta_* >0$ and applying Lemma~\ref{ODE_lemma1_ter1}, we conclude that $\hat{\phi}_1^\e \equiv 0$, a contradiction. The lemma is proved.
\end{proof}

{Finally, by elliptic estimates, for any sequence $z_n \to \infty$ and any limit $(\phi_{1,\infty}, \phi_{2,\infty}, \phi_{3,\infty} )$ of $(\phi_1, \phi_2 , \phi_3 )( \cdot + z_n)$,
then $(u,v,w) (x,t) = (\phi_{1,\infty}, \phi_{2,\infty}, \phi_{3,\infty}) (x + st)$ is an entire solution of~\eqref{main}.
Moreover, by Lemmas~\ref{lem:liminf1_ter3}, \ref{lem:liminf1_ter2} and~\ref{lem:liminf1_bis}, it satisfies the assumptions of Lemma~\ref{LE-entire-full}. It follows that the stable tail limit is the desired state~$E_c$.}

\subsection{The case of the alien weak competitor}

Next we turn to the case when the aboriginal prey is the strong competitor. Here we only manage to establish the stable tail when it is the co-existence state. Still, the following result states that the alien weak competitor always invades the environment when \eqref{positive2} and \eqref{uur} hold, as we assume throughout this subsection.
\begin{lemma}\label{lem:liminf1_ter}
It holds that $\phi_1^- = \liminf_{z \to \infty} \phi_1 (z) >0$.
\end{lemma}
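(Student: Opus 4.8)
The plan is to mimic the contradiction argument used in Lemma~\ref{lem:liminf1_ter2}, adapting it to the present setting where the unstable tail is $E_* = (0,v_*,w_*)$ and $\phi_1$ is the alien weak competitor. Suppose, for contradiction, that $\phi_1^- = 0$. Then for every small $\varepsilon>0$ we can extract (as in the proofs of Lemmas~\ref{lem:liminf1_ter12}--\ref{lem:liminf1_ter3}) sequences $z_n \to \infty$ and $z_n' \to \infty$ with $z_n - z_n' \to \infty$, $\phi_1(z_n') = \varepsilon$ and $\phi_1 \le \varepsilon$ on $[z_n', z_n]$. By elliptic estimates the shifts $(\phi_1,\phi_2,\phi_3)(\cdot + z_n')$ converge (up to a subsequence) to a solution $(\hat\phi_1^\e,\hat\phi_2^\e,\hat\phi_3^\e)$ of~\eqref{TWS} with $\hat\phi_1^\e(0) = \e$ and $0 \le \hat\phi_1^\e \le \e$ on $[0,\infty)$. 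Since $\b_* > 0$ by~\eqref{positive2}, we can use Lemmas~\ref{lem:liminf1_ter23} and~\ref{lem:liminf1_ter3} to obtain some $\delta_0 > 0$, independent of $\e$, with $\hat\phi_2^\e(z), \hat\phi_3^\e(z) \ge \delta_0$ for all $z \ge 0$.

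The key step is to show that, uniformly as $\e \to 0$, the pair $(\hat\phi_2^\e,\hat\phi_3^\e)$ relaxes toward $(v_*, w_*)$: there exist $z_\e \ge 0$ and $\delta(\e) \to 0$ such that
$$
|\hat\phi_2^\e(z) - v_*| + |\hat\phi_3^\e(z) - w_*| \le \delta(\e), \quad \forall\, z \ge z_\e.
$$
One argues exactly as for~\eqref{claim0_ter}: if this fails, then along some $z^\e \to \infty$ and a subsequence $\e \to 0$ the further shifts $(\hat\phi_1^\e,\hat\phi_2^\e,\hat\phi_3^\e)(\cdot + z^\e)$ converge to a solution $(\hat\phi_1^0,\hat\phi_2^0,\hat\phi_3^0)$ of~\eqref{TWS} with $\hat\phi_1^0 \equiv 0$; hence $(\hat\phi_2^0, \hat\phi_3^0)(x+st)$ is an entire in time solution of the $u=0$ subsystem of~\eqref{main}, squeezed between $\delta_0$ and the uniform upper bounds $1$ and $2a-1$. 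By the second part of Lemma~\ref{prelim_predatorprey}, $\hat\phi_2^0 \equiv v_*$ and $\hat\phi_3^0 \equiv w_*$, a contradiction. This proves the claim.

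Once the claim holds, for $\e$ small enough we have, after a translation $z \mapsto z - z_\e$, that $(\hat\phi_1^\e, (\hat\phi_1^\e)', \hat\phi_2^\e, (\hat\phi_2^\e)', \hat\phi_3^\e, (\hat\phi_3^\e)')$ stays in the neighborhood $W_*$ of $(0,0,v_*,0,w_*,0)$ furnished by Lemma~\ref{ODE_lemma1_ter1} (whose hypothesis $\beta_* > 0$ holds); note the derivative bounds come from standard elliptic estimates together with the smallness of $|\hat\phi_2^\e - v_*|$, $|\hat\phi_3^\e - w_*|$ and of $\hat\phi_1^\e$. Lemma~\ref{ODE_lemma1_ter1} then forces $\hat\phi_1^\e \equiv 0$, contradicting $\hat\phi_1^\e(0) = \e > 0$. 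Hence $\phi_1^- > 0$. The main obstacle is purely bookkeeping: making sure the constants $\delta_0$ can be chosen independently of $\e$ (which is exactly what Lemmas~\ref{lem:liminf1_ter23} and~\ref{lem:liminf1_ter3} provide, since $\delta_{23}$ and $\phi_3^-$ are fixed by the original wave) and that the derivatives enter $W_*$ — both of which follow from the same arguments already used for Lemma~\ref{lem:liminf1_ter2}, with the roles of $E^*$ and $E_*$ interchanged.
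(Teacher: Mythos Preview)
Your approach is correct and matches the paper's, which simply states that the proof is identical to that of Lemma~\ref{lem:liminf1_bis} using $\beta_*>0$. There is one bookkeeping slip: Lemma~\ref{lem:liminf1_ter23} only gives $\hat\phi_2^\e+\hat\phi_3^\e\ge\delta_{23}$, which does not by itself yield $\hat\phi_2^\e\ge\delta_0$. The correct reference for the lower bound on $\hat\phi_2^\e$ is Lemma~\ref{lem:liminf1_ter12}: since $\hat\phi_1^\e+\hat\phi_2^\e\ge\delta_{12}$ and $\hat\phi_1^\e\le\e$ on $[0,\infty)$, one gets $\hat\phi_2^\e\ge\delta_{12}-\e\ge\delta_{12}/2$ for $\e$ small. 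With this correction the rest of your argument goes through verbatim.
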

The proof is exactly the same as that of Lemma~\ref{lem:liminf1_bis}, and therefore we omit it. It relies on the fact that, in Theorem~\ref{th:cs2}, we make the assumption that $\beta_* >0$.

Let us now make the additional assumptions that \eqref{co-ex} and \eqref{ODE_lyapu} hold. Then we show that $(\phi_1,\phi_2,\phi_3)$ satisfies \eqref{Bc}.} Thanks to Lemma~\ref{LE-entire-full}, it is enough to show that $\phi_i^- >0$ for $i = 1,2,3$. We already dealt with $i=1,3$, so that it only remains to prove the following lemma.
\begin{lemma}
It holds that $\phi_2^- = \liminf_{z \to \infty} \phi_2 (z) >0$.
\end{lemma}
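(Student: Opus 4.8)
The plan is to mimic the contradiction argument used for Lemmas~\ref{lem:liminf1_ter3} and~\ref{lem:liminf1_ter2}, but now for the component $\phi_2$ in the situation where the aboriginal prey is the strong competitor. Suppose by contradiction that $\phi_2^- = 0$. Then for any small $\varepsilon>0$ we can extract, exactly as before, sequences $\{z_n\}$, $\{z_n'\}$ with $z_n - z_n' \to \infty$, $\phi_2(z_n') = \varepsilon$ and $\phi_2 \leq \varepsilon$ on $[z_n', z_n]$; passing to the limit via elliptic estimates we obtain a solution $(\hat{\phi}_1^\e, \hat{\phi}_2^\e, \hat{\phi}_3^\e)$ of \eqref{TWS} with $\hat{\phi}_2^\e(0) = \varepsilon$ and $0 < \hat{\phi}_2^\e \leq \varepsilon$ on $[0,\infty)$. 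By Lemmas~\ref{lem:liminf1_ter} and~\ref{lem:liminf1_ter3} (and the latter's proof, which applies verbatim here since $\beta_* >0$), there is $\delta_0 >0$, independent of $\varepsilon$, with $\hat{\phi}_1^\e \geq \delta_0$ and $\hat{\phi}_3^\e \geq \delta_0$ on $[0,\infty)$.

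Next I would establish a claim of the form: there exist $z_\varepsilon$ and $\delta(\varepsilon) \to 0$ as $\varepsilon \to 0$ such that
$$|\hat{\phi}_1^\e(z) - u^*| + |\hat{\phi}_3^\e(z) - w^*| \leq \delta(\varepsilon), \quad \forall\, z \geq z_\varepsilon.$$
This is proved by contradiction as in~\eqref{claim0_ter}: if not, along a subsequence $\varepsilon \to 0$ and points $z^\varepsilon \to \infty$ the shifted profiles $(\hat{\phi}_1^\e, \hat{\phi}_2^\e, \hat{\phi}_3^\e)(\cdot + z^\varepsilon)$ converge to a solution $(\hat{\phi}_1^0, \hat{\phi}_2^0, \hat{\phi}_3^0)$ of \eqref{TWS} with $\hat{\phi}_2^0 \equiv 0$; hence $(\hat{\phi}_1^0, \hat{\phi}_3^0)(x+st)$ is a bounded entire solution of the two-species predator–prey subsystem~\eqref{2_predatorprey}, pinched between $\delta_0$ and the upper bounds from \eqref{ul-bound}, so Lemma~\ref{prelim_predatorprey} forces $\hat{\phi}_1^0 \equiv u^*$, $\hat{\phi}_3^0 \equiv w^*$, a contradiction.

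Finally, once the profile $(\hat{\phi}_1^\e, \hat{\phi}_1^{\e\,\prime}, \hat{\phi}_2^\e, \hat{\phi}_2^{\e\,\prime}, \hat{\phi}_3^\e, \hat{\phi}_3^{\e\,\prime})$ is known to stay in a small neighborhood $W^*$ of $(u^*, 0, 0, 0, w^*, 0)$ for all $z \geq z_\varepsilon$ — using the claim together with elliptic estimates to control the derivatives — and since $\beta^* > 0$ holds here (because the assumptions \eqref{co-ex} and \eqref{ODE_lyapu} imply, as recorded in Subsection~\ref{sec:ODE}, that $\triangle_v >0$ hence $\beta^* >0$), Lemma~\ref{ODE_lemma1_ter1} yields $\hat{\phi}_2^\e \equiv 0$ on $\R$ (after the shift $z \mapsto z - z_\varepsilon$), contradicting $\hat{\phi}_2^\e(0) = \varepsilon > 0$. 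This completes the proof. The main obstacle is the verification that $\beta^* > 0$ under the co-existence hypotheses, which is needed to invoke the second part of Lemma~\ref{ODE_lemma1_ter1}; this follows from the chain $\eqref{ODE_lyapu} \Rightarrow \triangle > 0$ together with the existence of $E_c$ forcing $\triangle_v > 0$, which is precisely $\beta^*>0$ after multiplication by the positive factor $1 + a b_1$. Everything else is a routine adaptation of the earlier sequential compactness arguments.
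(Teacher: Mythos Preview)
Your proposal is correct and takes essentially the same approach as the paper: the paper simply observes that the argument is identical to that of Lemma~\ref{lem:liminf1_ter2}, valid because $\beta^*>0$ follows from \eqref{co-ex} and \eqref{ODE_lyapu}. The only minor variation is that you obtain the lower bounds on $\hat{\phi}_1^\e$ and $\hat{\phi}_3^\e$ via Lemmas~\ref{lem:liminf1_ter} and~\ref{lem:liminf1_ter3} directly, whereas the cited proof of Lemma~\ref{lem:liminf1_ter2} draws them from the sum estimates of Lemmas~\ref{lem:liminf1_ter12} and~\ref{lem:liminf1_ter23}; both routes are valid here.
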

Again, this result has actually already been proved above. Indeed, it is the same as Lemma~\ref{lem:liminf1_ter2} thanks to the fact that $\beta^* >0$, which is itself a consequence of \eqref{co-ex} and \eqref{ODE_lyapu}. This concludes the proof of Theorem~\ref{th:cs2}.

\section{Non-existence of traveling waves}\label{sec:non}
\setcounter{equation}{0}

The first statement of Theorem~\ref{th:non} immediately follows from the next result.
\begin{theorem} Assume that $\beta^* >0$. For $s<s^{*}$, there is no positive solution $(\phi_1,\phi_2,\phi_3)$ of~\eqref{TWS} {and \eqref{BCv} with}
\beaa
\liminf_{z\rightarrow\infty} \phi_2 (z) >0 .
\eeaa
\end{theorem}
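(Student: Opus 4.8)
The plan is to exploit the fact that, at the unstable tail $z\to-\infty$, the first-prey component $\phi_1$ converges to $u^*>0$, so that the second-prey component $\phi_2$ satisfies asymptotically a linear inequality driven by the coefficient $r_2\beta^*$. Since $\beta^*>0$, the relevant linearized operator $d_2\partial_{zz}-s\partial_z+r_2\beta^*$ has, for $s<s^*=2\sqrt{d_2r_2\beta^*}$, only complex roots, which forces any positive solution of the corresponding inequality to oscillate and change sign near $-\infty$ — contradicting positivity of $\phi_2$. This is the classical Fisher--KPP-type obstruction to subcritical traveling waves.

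\medskip

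First I would set up the asymptotics at the invaded state. From \eqref{BCv} we have $(\phi_1,\phi_2,\phi_3)(z)\to(u^*,0,w^*)$ as $z\to-\infty$, so for any $\eta>0$ there is $z_\eta$ with $\phi_1(z)\ge u^*-\eta$ and $\phi_3(z)\le w^*+\eta$ for all $z\le z_\eta$. Plugging these into the second equation of \eqref{TWS} and using $\phi_2\ge 0$, one gets, for $z\le z_\eta$,
\be\label{nonlin_ineq}
d_2\phi_2''(z)-s\phi_2'(z)+r_2\phi_2(z)\bigl[1-h(u^*-\eta)-\phi_2(z)-b_2(w^*+\eta)\bigr]\ge 0.
\ee
Since $1-hu^*-b_2w^*=\beta^*$ and $\phi_2(z)\to 0$ as $z\to-\infty$, by shrinking $\eta$ and further decreasing $z_\eta$ we can guarantee that the bracketed term stays above $\beta^*/2>0$ (say), so that $\phi_2$ is a positive supersolution on $(-\infty,z_\eta]$ of a linear equation $d_2\psi''-s\psi'+c\psi=0$ with $c$ arbitrarily close to $r_2\beta^*$ from below.

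\medskip

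Next, the contradiction. Suppose $s<s^*$; then choosing $\eta$ small we may also keep $s<2\sqrt{d_2 c}$, so the characteristic equation $d_2\mu^2-s\mu+c=0$ has complex conjugate roots $\mu_\pm=\frac{s}{2d_2}\pm i\omega$ with $\omega>0$. The standard way to derive the contradiction is a Sturm-type comparison: let $\psi(z)=e^{-sz/(2d_2)}\phi_2(z)$, which then satisfies $d_2\psi''+\bigl(c-\tfrac{s^2}{4d_2}\bigr)\psi\ge 0$ on $(-\infty,z_\eta]$ with $c-\tfrac{s^2}{4d_2}>0$; comparing with solutions of $d_2\chi''+\bigl(c-\tfrac{s^2}{4d_2}\bigr)\chi=0$, which oscillate with a fixed finite period, one sees that a positive function cannot satisfy this inequality on a half-line. (Equivalently: test \eqref{nonlin_ineq} against a compactly supported oscillatory function, or use the maximum principle on a half-period interval where the comparison solution vanishes at both endpoints, to force $\phi_2$ to vanish somewhere, contradicting $\phi_2>0$.) Note that the hypothesis $\liminf_{z\to\infty}\phi_2>0$ is not actually needed for this argument — only $\phi_2>0$ and the behavior at $-\infty$ — which is consistent with the remark after Theorem~\ref{th:non} that the stable-tail condition can be replaced by positivity of the infimum limit.

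\medskip

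The main obstacle I anticipate is making the passage from the nonlinear inequality \eqref{nonlin_ineq} to a clean linear comparison rigorous: one must simultaneously absorb the nonlinear term $-r_2\phi_2^2$ (harmless since it has the favorable sign, as $\phi_2>0$ makes the bracket only smaller, so $\phi_2$ is still a supersolution of the linear problem with coefficient $r_2[\,1-hu^*-b_2w^*\,]$ up to the $\eta$-perturbation) and control the $\eta$-error uniformly, then invoke the oscillation argument on a sufficiently long interval contained in $(-\infty,z_\eta]$. A careful treatment would pick the comparison interval to be one on which the oscillatory solution $\chi$ changes sign, apply the maximum principle to $\psi-\lambda\chi$ for suitable $\lambda>0$, and conclude that $\psi$ — hence $\phi_2$ — must take a nonpositive value there, the desired contradiction.
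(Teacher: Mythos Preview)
Your approach differs substantially from the paper's. The paper does not argue via oscillation at $-\infty$; instead it handles $s\le 0$ by a double integration of the $\phi_2$-equation from $-\infty$, and for $0<s<s^*$ it uses the hypothesis $\liminf_{z\to\infty}\phi_2>0$ to manufacture \emph{global} bounds $\phi_1-c_1\phi_2<u^*+\varepsilon$ and $\phi_3-c_2\phi_2<w^*+\varepsilon$ valid on all of $\mathbb{R}$, inserts these into the $v$-equation to obtain a scalar KPP-type differential inequality, and then invokes the Aronson--Weinberger spreading theorem to reach a contradiction. Your route is more elementary (pure second-order ODE comparison, no PDE spreading result) and, as you correctly observe, does not actually use the stable-tail hypothesis.

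However, your inequalities point the wrong way. The displayed inequality with ``$\ge 0$'' does not follow from the bounds you quote (taking $\phi_1\ge u^*-\eta$ moves $-h\phi_1$ in the opposite direction from taking $\phi_3\le w^*+\eta$ on $-b_2\phi_3$), and more importantly a positive function \emph{can} satisfy $\psi''+\omega^2\psi\ge 0$ on a half-line (e.g.\ $\psi\equiv 1$), so no contradiction arises from that direction. What the Sturm argument needs are the \emph{upper} bounds $\phi_1\le u^*+\eta$, $\phi_2\le\eta$, $\phi_3\le w^*+\eta$ near $-\infty$, which give $d_2\phi_2''-s\phi_2'+c\phi_2\le 0$ with $c$ close to $r_2\beta^*$, hence $\psi''+\omega^2\psi\le 0$; it is \emph{this} inequality that comparison with $\sin(\omega z)$ forbids for a positive $\psi$ on any interval of length $\pi/\omega$. (Equivalently and more cleanly: $\psi=e^{-sz/(2d_2)}\phi_2$ satisfies the \emph{equation} $\psi''+Q(z)\psi=0$ with $Q(z)\to (r_2\beta^*-s^2/(4d_2))/d_2>0$ as $z\to-\infty$, and Sturm comparison directly forces a zero.) The same sign confusion appears in your last paragraph: the $-\phi_2$ term makes the bracket \emph{smaller}, which works \emph{against} the supersolution inequality you want, though it is harmless because $\phi_2\to 0$. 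With these sign corrections your argument goes through.
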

\begin{proof}
First, suppose that for some $s \leq 0$, there exists a positive solution $(\phi_1,\phi_2,\phi_3)$ {of \eqref{TWS}} satisfying the boundary condition \eqref{BCv}.
We choose $N >  1$ large enough so that if $y<-N$ then
\beaa
 1-h\phi_1(y)-\phi_2(y)-b_2\phi_3(y)>\frac{\b^{*}}{2}.
\eeaa
Now we integrate the second equation in \eqref{TWS} in $y$ from $-\infty$ to $z\leq-N$ and in $z$ from  $-\infty$ to $-N$.
Then we obtain the following contradiction
\beaa
0<\frac{r_2\b^{*}}{2}\int^{-N}_{-\infty}\int^{z}_{-\infty}\phi_2(y)dydz< -\int^{-N}_{-\infty}d_2\phi_2'(z)dz=-d_2\phi_2(-N)<0.
\eeaa

Now suppose that there exists such a traveling wave solution for {some $s\in(0,s^*)$.}
Then we pick $\e$ small enough so that $0<s<2\sqrt{d_2r_2[\b^{*}-(h+b_2)\e]}$. By the positivity and continuity of $(\phi_1,\phi_2,\phi_3)$, and the fact that
\beaa
\lim_{z\rightarrow-\infty} (\phi_1,\phi_2,\phi_3)(z) = E^{*}\quad\text{and}\quad\liminf_{z\rightarrow\infty} \phi_2 )(z)>0,
\eeaa
there are nonnegative constants $c_1$ and $c_2$ such that
\bea
&& \phi_1(z)-c_1\phi_2(z)<u^{*}+\e,\;\;\forall z \in \mathbb{R},\label{phi12 ineq}\\
&& \phi_3(z)-c_2\phi_2(z)<w^{*}+\e,\;\;\forall z \in \mathbb{R}.\label{phi32 ineq}
\eea
Using the notation $(u,v,w)(x,t) = (\phi_1,\phi_2,\phi_3)(x+st)$ and plugging \eqref{phi12 ineq} and \eqref{phi32 ineq} into the second equation of \eqref{main}, we get
\beaa
v_{t}\geq d_2v_{xx}+r_2v[\beta^{*}-(h+b_2)\e-(1+hc_1+b_2c_2)v],\;\; x\in\mathbb{R},\;t>0 .
\eeaa
The spreading theory of \cite{AW75} gives
\beaa
\liminf_{t\rightarrow\infty} v(ct,t)\geq \frac{\beta^{*}-(h+b_2)\e}{1+hc_1+b_2c_2}>0,
\eeaa
for any $|c| < 2 \sqrt{d_2 r_2 [\beta^* - (h+ b_2) \e]}$. This in particular holds true with
$$c := - \frac{s + 2 \sqrt{d_2 r_2 [\beta^* - (h+ b_2)\e] }}{2} .$$
On the other hand  $ct +st = (s-2\sqrt{d_2r_2[\beta-(h+b_2)\e]})t/2\to-\infty$ as $t\to\infty$. This implies that $v(ct,t) = \phi_2(ct+st) \to 0$ as $t\to\infty$, a contradiction.
\end{proof}
When $\beta_* >0$, one can check by the same argument that there is no positive traveling wave solution going to $E_*$ at the unstable tail limit and such that the infimum limit at $\infty$ of the first component is positive. This concludes the proof of Theorem~\ref{th:non}.


\section{Verification of upper-lower-solutions}\label{verul}
\setcounter{equation}{0}

\subsection{Proof of Lemma~\ref{u0w}}

It is easy to check that \eqref{order} and \eqref{lrd} hold, as well as the unstable tail limit. Therefore we focus only on the differential inequalities.

(1) $\mathcal{U}_1(z)\leq 0$ for $z\neq 0$. Recall that for $z>0$,
\beaa
\overline{\phi}_1(z) = 1,\; \underline{\phi}_2(z) = 0,\; \underline{\phi}_3(z) = 0.
\eeaa
It immediately follows that
\beaa
\mathcal{U}_1(z) =  0\quad \mbox{ for $z>0$.}
\eeaa
On the other hand, for $z<0$, we have
\beaa
\overline{\phi}_1(z) = u^{*}+b_1w^{*}e^{\l_1 z},\; \underline{\phi}_2(z)\geq 0,\; \underline{\phi}_3(z) = w^{*}(1 -e^{\l_1 z}).
\eeaa
Then, using $u^{*}+b_1w^{*}=1$, we obtain
\beaa
\mathcal{U}_1(z) &\leq & b_1w^{*}(d_1\l_1^2-s\l_1)e^{\l_1 z} +r_1(u^{*}+b_1w^{*}e^{\l_1 z})( -b_1w^{*}e^{\l_1 z}+b_1w^{*}e^{\l_1 z} )\\
                 &  =  & b_1w^{*}(d_1\l_1^2-s\l_1)e^{\l_1 z} \leq 0,
\eeaa
for $z < 0$, where the last inequality holds thanks to~\eqref{vineq}.

(2) $\mathcal{U}_2(z)\leq 0$ for $z\neq 0$. For $z>0$, we have
\beaa
\underline{\phi}_1(z)\geq 0,\; \overline{\phi}_2(z) = 1,\; \underline{\phi}_3(z) = 0.
\eeaa
Therefore $\mathcal{U}_2(z) \leq 0$ for $z>0$. For $z<0$,
\beaa
\underline{\phi}_1(z)=u^{*}(1-p_1e^{\l_1 z}),\; \overline{\phi}_2(z) = e^{\l_1 z},\; \underline{\phi}_3(z) = w^{*}(1 -e^{\l_1 z}),
\eeaa
and then
\beaa
\mathcal{U}_2(z) & =  & (d_2\l_1^2-s\l_1)e^{\l_1 z} +r_2e^{\l_1 z}[1-hu^{*}+hu^{*}p_1e^{\l_1 z}-e^{\l_1 z}-b_2w^{*}+b_2w^{*}e^{\l_1 z}]\\
                 & =  & G(\l_1 )e^{\l_1 z} + r_2e^{2\l_1 z}(hu^{*}p_1+b_2w^{*}-1)\\
                 &\leq& r_2e^{2\l_1 z}(hu^{*}+b_2w^{*}-1)\leq 0.
\eeaa
Here we used  $G(\l_1)=0$, $p_1\le 1$ from \eqref{vp} and $\b^*>0$.

(3) $\mathcal{U}_3(z)\leq 0$ for $z\neq 0$. For $z>0$, we have
\beaa
\overline{\phi}_1(z) = 1,\; \overline{\phi}_2(z) = 1,\; \overline{\phi}_3(z) = 2a-1,
\eeaa
hence
\beaa
 \mathcal{U}_3(z) &=& r_3(2a-1)[-1+a+a-(2a-1)]= 0.
\eeaa
Furthermore, for $z<0$,
\beaa
\overline{\phi}_1(z) = u^{*}+b_1w^{*}e^{\l_1 z},\; \overline{\phi}_2(z) = e^{\l_1 z},\; \overline{\phi}_3(z) = w^{*}+Ae^{\l_1 z}.
\eeaa
Using $-1+au^{*}-w^{*}=0$ and \eqref{vB}, we get
\beaa
\mathcal{U}_3(z)&=& A(d_3\l_1^2-s\l_1)e^{\l_1 z} + r_3(w^{*}+Ae^{\l_1 z})e^{\l_1 z}(ab_1w^{*}+a-A)\\
                &=& A(d_3\l_1^2-s\l_1)e^{\l_1 z} + r_3(w^{*}+Ae^{\l_1 z})e^{\l_1 z}[(1+ab_1)w^{*}- a + 1]\\
                &=& A(d_3\l_1^2-s\l_1)e^{\l_1 z}\leq 0,
\eeaa
for $z <0$, thanks to $(1+ab_1)w^{*}=a-1$ and \eqref{vineq}.

(4) $\mathcal{L}_1(z)\geq 0$ for $z\not \in \{  0,z_1 \}$. For $z>z_1\ge 0$, we get
\beaa
\underline{\phi}_1(z)= 0,\; \overline{\phi}_2(z)= 1,\; \overline{\phi}_3(z)=2a-1,
\eeaa
and thus $\mathcal{L}_1(z) =0$.

For $0<z<z_1$, we have
\beaa
\underline{\phi}_1(z)=u^{*}(1-p_1e^{\l_1 z}),\; \overline{\phi}_2(z)=1,\; \overline{\phi}_3(z) = 2a-1,
\eeaa
and, by \eqref{vineq} and the  choice of $p_1$ in \eqref{vp}, it follows that
\beaa
\mathcal{L}_1(z) &  =  & -u^{*}p_1(d_1\l_1^2-s\l_1)e^{\l_1 z} + r_1u^{*}(1-p_1e^{\l_1 z})[1-u^{*}(1-p_1e^{\l_1 z})-k-b_1(2a-1)]\\
                 &\geq & -u^{*}p_1(d_1\l_1^2-s\l_1)e^{\l_1 z} - r_1u^{*}[k+b_1(2a-1)]\\
                 &\geq &  u^{*}\left\{ -p_1(d_1\l_1^2-s\l_1) - r_1[k+b_1(2a-1)] \right\} \geq 0 .
\eeaa
Lastly, for $z<0$, recall that
\beaa
\underline{\phi}_1(z)=u^{*}(1-p_1e^{\l_1 z}),\; \overline{\phi}_2(z)=e^{\l_1 z},\; \overline{\phi}_3(z) = w^{*}+Ae^{\l_1 z}.
\eeaa
Then, using $u^{*}+b_1w^{*}=1$ and again the choice of $p_1$ in \eqref{vp}, we obtain
\beaa
\mathcal{L}_1(z) &  =  & -u^{*}p_1(d_1\l_1^2-s\l_1)e^{\l_1 z} + r_1u^{*}(1-p_1e^{\l_1 z})e^{\l_1 z}[u^{*}p_1-k-b_1A]\\
                 &\geq & -u^{*}p_1(d_1\l_1^2-s\l_1)e^{\l_1 z} + r_1u^{*}(1-p_1e^{\l_1 z})e^{\l_1z}[-k-b_1(2a-1)]\\
                 &\geq & u^{*}e^{\l_1 z}\left\{ -p_1(d_1\l_1^2-s\l_1) - r_1[k+b_1(2a-1)] \right\} \geq 0,
\eeaa
for $z<0$.

(5) $\mathcal{L}_2(z)\geq 0$ for $z\neq z_2$. For $z>z_2$, we have $\underline{\phi}_2 (z) = 0$ and thus $\mathcal{L}_2(z) = 0$.

Then, for $z<z_2<0$,
\beaa
\overline{\phi}_1(z) = u^{*}+b_1w^{*}e^{\l_1 z},\; \underline{\phi}_2(z)=e^{\l_1 z}-qe^{\mu\l_1 z},\; \overline{\phi}_3(z)=w^{*}+Ae^{\l_1 z}.
\eeaa
Using $G(\l_1)=0$ and $\b^*>0$, we get
\beaa
\mathcal{L}_2(z) &\geq & -qG(\mu\l_1)e^{\mu\l_1 z} +r_2(e^{\l_1 z}-qe^{\mu\l_1 z})(-hb_1w^{*}e^{\l_1z}-e^{\l_1z}-b_2Ae^{\l_1z})\\
                 &\geq & -qG(\mu\l_1)e^{\mu\l_1 z} -r_2e^{2\l_1 z}(hb_1w^{*}+1+b_2A)\\
                 & \geq  & e^{\mu\l_1 z}[ -qG(\mu\l_1)-r_2e^{(2-\mu)\l_1 z}(hb_1w^{*}+1+b_2A) ]\\
                 &\geq & e^{\mu\l_1 z}[ -qG(\mu\l_1)-r_2(hb_1w^{*}+1+b_2A) ]\geq 0,
\eeaa
for $z<z_2$, by the choice of $\mu$ in \eqref{vmu}, which ensures that $2 - \mu >0$ and $G(\mu\l_1)< 0$, and the choice of $q$ in \eqref{vq}.

(6) $\mathcal{L}_3(z)\geq 0$ for $z\neq 0$. For $z>0$, $\underline{\phi}_3(z)= 0$ gives $\mathcal{L}_3(z) =0$.

For $z <0$, we have
\beaa
\underline{\phi}_1(z)=u^{*}(1-p_1e^{\l_1 z}),\; \underline{\phi}_2(z)  \geq 0,\; \underline{\phi}_3(z) = w^{*}(1 -e^{\l_1 z})\le w^*.
\eeaa
Then, using $p_1\le 1$, $-1+au^{*}-w^{*}=0$ and \eqref{vd}, we get
\beaa
\mathcal{L}_3(z) & \geq  & -w^{*}(d_3\l_1^2-s\l_1)e^{\l_1 z}+r_3\underline{\phi}_3(z)e^{\l_1 z}(-au^{*}p_1 +w^{*})\\
                 &\geq& -w^{*}(d_2\l_1^2-s\l_1)e^{\l_1 z}-r_3\underline{\phi}_3(z)e^{\l_1 z}\\
                 &\geq& w^*e^{\l_1 z}(r_2\beta^*-r_3)\ge 0,
\eeaa
for $z<0$.
%
This completes the proof of Lemma~\ref{u0w}.


\subsection{Proof of Lemma~\ref{u00w}}

Here $s= s^*$. As before, we only deal with the differential inequalities.

(1) $\mathcal{U}_1(z)\leq 0$ for $z\neq -2/\l_1$. For $z>-2/\l_1$, then $\overline{\phi}_1(z) = 1$, $\underline{\phi}_2(z)\geq 0$ and $\underline{\phi}_3(z)=0$, so that $\mathcal{U}_1(z)\leq 0$.

For $z<-2/\l_1$, we have that
\beaa
\overline{\phi}_1(z)=u^{*}+L^{*}b_1w^{*}(-z)e^{\l_1z},\;\underline{\phi}_2(z)\geq 0,\;\underline{\phi}_3(z)=w^{*}[1-L^{*}(-z)e^{\l_1z}].
\eeaa
Then
\beaa
\mathcal{U}_1(z) &\leq & L^{*}b_1w^*(-2d_1\l_1+s)e^{\l_1z}+L^{*}b_1w^*(d_1\l_1^2-s\l_1 )(-z)e^{\l_1z}\\
                 &  =  & -r_2 L^{*}b_1w^*\b^{*}(-z)e^{\l_1z}\le 0,
\eeaa
for $z<-2/\l_1$, by using the first part of \eqref{vvvd0}, \eqref{l} with $s= s^*$ and $\b^{*}>0$.

(2) $\mathcal{U}_2(z)\leq 0$ for $z\neq -2/\l_1$. For $z>-2/\l_1$, we have that $\underline{\phi}_1(z)\geq 0$, $\overline{\phi}_2(z) = 1$ and $\underline{\phi}_3(z) = 0$, hence $\mathcal{U}_2(z)\leq 0$.

For $z<-2/\l_1$, due to $z_1 > - 2 /\l_1$, we have
\beaa
\underline{\phi}_1(z)=u^{*}[1-p_1L^{*}(-z)e^{\l_1z}],\;\overline{\phi}_2(z) = L^{*}(-z)e^{\l_1z},\;\underline{\phi}_3(z) =w^{*}[1-L^{*}(-z)e^{\l_1z}].
\eeaa
Then
\beaa
\mathcal{U}_2(z) &  = & L^*(-2d_2\l_1+s)e^{\l_1 z}+L^{*}(d_2\l_1^2-s\l_1)(-z)e^{\l_1z}\\
                 && +r_2L^{*}(-z)e^{\l_1z}[\b^{*}+hu^{*}p_1L^{*}(-z)e^{\l_1z}-L^{*}(-z)e^{\l_1z}+b_2w^{*}L^{*}(-z)e^{\l_1z}]\\
                 &  = & r_2[L^{*}(-z)e^{\l_1z}]^2(-1+hu^{*}p_1+b_2w^{*})\\
                 &\leq& -r_2[L^{*}(-z)e^{\l_1z}]^2(1-hu^{*}-b_2w^{*})= -r_2[L^{*}(-z)e^{\l_1z}]^2\b^{*}\le 0,
\eeaa
for $z<-2/\l_1$, by using $p_1\leq 1$ and $\b^{*}>0$.

(3) $\mathcal{U}_3(z)\leq 0$ for $z\neq -2/\l_1$. For $z>-2/\l_1$, then $\overline{\phi}_1(z)=1$, $\overline{\phi}_2(z)=1$ and $\overline{\phi}_3(z)=2a-1$ and hence $\mathcal{U}_3(z) = 0$.

For $z<-2/\l_1$,
\beaa
\overline{\phi}_1(z)=u^{*}+L^{*}b_1w^{*}(-z)e^{\l_1z},\;\overline{\phi}_2(z)=L^{*}(-z)e^{\l_1z},\;\overline{\phi}_3(z)=w^{*}+L^{*}A(-z)e^{\l_1z}.
\eeaa
Then
\beaa
\mathcal{U}_3(z) & = & L^{*} A (-2d_3\l_1+s)e^{\l_1z} +L^{*} A (d_3\l_1^2-s\l_1)(-z)e^{\l_1z}\\
                 &\quad&+r_3\overline{\phi}_3(z)[aL^{*} b_1w^{*}(-z)e^{\l_1z}+aL^{*}(-z)e^{\l_1z}-L^{*}A(-z)e^{\l_1z}]\\
                 & = & L^{*} A(-2d_3\l_1+s)e^{\l_1z} +L^{*} A(d_3\l_1^2-s\l_1)(-z)e^{\l_1z}\\
                 &\quad&+r_3\overline{\phi}_3(z)[(1+ab_1)w^{*}- ( a-1)]L^{*}(-z)e^{\l_1z}\\
                 & = & L^{*} A (-2d_3\l_1+s)e^{\l_1z} +L^{*} A (d_3\l_1^2-s\l_1)(-z)e^{\l_1z}\leq 0,
\eeaa
for $z<-2/\l_1$, using $A = 2 a - 1 - w^* >0$ and since $-2d_3\l_1+s\leq 0$ and $d_3\l_1^2-s\l_1\leq 0$, by the first part of~\eqref{vvvd0}.

(4) $\mathcal{L}_1(z)\geq 0$ for $z\not \in \{ -2/\l_1,z_1 \}$. For $z>z_1$, $\mathcal{L}_1(z)=0$ by $\underline{\phi}_1=0$. Next, for $-2/\l_1<z<z_1$, we have
\beaa
\underline{\phi}_1(z) = u^{*}[1-p_1L^{*}(-z)e^{\l_1z}],\;\overline{\phi}_2(z)=1,\;\overline{\phi}_3(z)=2a-1.
\eeaa
Then
\beaa
\mathcal{L}_1(z)  & =   & -u^{*}p_1L^{*}(-2d_1\l_1+s)e^{\l_1z} -u^{*}p_1L^{*}(d_1\l_1^2-s\l)(-z)e^{\l_1z}\\
                  &\quad& +r_1\underline{\phi}_1(z)[1-\underline{\phi}_1(z)-k-b_1(2a-1)]\\
                  &\geq & -u^{*}p_1(d_1\l_1^2-s\l_1)-r_1u^{*}[k+b_1(2a-1)]\\
                  & =   & u^{*}\{-p_1(d_1\l_1^2-s\l_1)-r_1[k+b_1(2a-1)]\}\geq 0,
\eeaa
by the first part of~\eqref{vvvd0}, $L^*(-z)e^{\l_1z}\ge 1$ for $z\in(-2/\l_1,z_1)$, $d_1 \lambda_1^2 - s \lambda < 0$ and the choice of $p_1$ in \eqref{vp_bis}.

Lastly, for $z<-2/\l_1$,
\beaa
\underline{\phi}_1(z) = u^{*}[1-p_1L^{*}(-z)e^{\l_1z}],\;\overline{\phi}_2(z)=L^{*}(-z)e^{\l_1z},\;\overline{\phi}_3(z)=w^{*}+L^{*}A(-z)e^{\l_1z},
\eeaa
and thus
\beaa
\mathcal{L}_1(z)  & =   & -u^{*}p_1L^{*}(-2d_1\l_1+s)e^{\l_1z} -u^{*}p_1L^{*}(d_1\l_1^2-s\l_1)(-z)e^{\l_1z}\\
                  &\quad& +r_1u^{*}[1-p_1L^{*}(-z)e^{\l_1z}][ u^{*}p_1L^{*}(-z)e^{\l_1z}-kL^{*}(-z)e^{\l_1z}-b_1 L^{*} A(-z)e^{\l_1z} ]\\
 & = & { -u^{*}p_1L^{*}(d_1\l_1^2-s\l_1)(-z)e^{\l_1z}  } \\
 &&  + r_1 u^* [ 1 - p_1 L^* (-z) e^{\l_1 z} ] [u^* p_1 + b_1 w^* - k -b_1 (2a -1)] L^* (-z) e^{\l_1 z} \\
                  &\geq & -u^{*}p_1L^{*}(d_1\l_1^2-s\l_1)(-z)e^{\l_1z} + r_1u^{*}[-k-b_1(2a-1)]L^{*}(-z)e^{\l_1z}\\
                  & =   & u^{*}\{-p_1(d_1\l_1^2-s\l_1)-r_1[k+b_1(2a-1)]\}L^{*}(-z)e^{\l_1z}\geq 0,
\eeaa
where we again used the first part of~\eqref{vvvd0} and \eqref{vp_bis}, {and in particular the fact that $u^* p_1 + b_1 w^* - k \leq 1 - k <0$.}

(5) $\mathcal{L}_2(z)\geq 0$ for $z\neq z_2$. For $z>z_2$, then $\overline{\phi}_2 (z) = 0$ and $\mathcal{L}_2(z) =  0$.

For $z<z_2$, and since $z_2 < -2/\l_1$, we have $\overline{\phi}_1(z)=u^{*}+L^{*}b_1 w^{*}(-z)e^{\l_1z}$, as well as
\beaa
\underline{\phi}_2(z)=[L^{*}(-z)-q(-z)^{1/2}]e^{\l_1z},\;\overline{\phi}_3(z)=w^{*}+L^{*}A(-z)e^{\l_1z}.
\eeaa
It follows that, for $z<z_2$,
\beaa
\mathcal{L}_2(z) &  = & q\frac{d_2}{4}(-z)^{-3/2}e^{\l_1z}+\underline{\phi}_2(z)(d_2\l_1^2-s\l_1)\\
                 && +r_2\underline{\phi}_2(z)[\b^{*}-hL^{*}b_1w^{*}(-z)e^{\l_1z}-L^{*}(-z)e^{\l_1z}+q(-z)^{1/2}e^{\l_1z}-b_2L^{*}A(-z)e^{\l_1z}]\\
                 &\geq& q\frac{d_2}{4}(-z)^{-3/2}e^{\l_1z}+r_2[L^{*}(-z)e^{\l_1z}]^2(-hb_1w^{*}-1-b_2A)\\
                 & =  & \frac{d_2}{4}(-z)^{-3/2}e^{\l_1z} \left[ q-\frac{4}{d_2}r_2(L^{*})^2(-z)^{7/2}e^{\l_1z}(hb_1w^{*}+1+b_2A) \right]\\
                 &\geq& \frac{d_2}{4}(-z)^{-3/2}e^{\l_1z} \left[q-\frac{4}{d_2}r_2(L^{*})^2M(hb_1w^{*}+1+b_2A) \right]\geq 0,
\eeaa
by the choice of $q$ in \eqref{vvq}, where we have used
\beaa
(-z)^{7/2}e^{\l_1z}\leq M:=\left(\frac{7}{2\l_1e}\right)^{7/2},\;\forall\, z<0.
\eeaa

(6) $\mathcal{L}_3(z)\geq 0$ for $z\neq - 2/ \lambda_1$. For $z>-2/\l_1$, due to $\underline{\phi}_3(z)=0$, we immediately get that $\mathcal{L}_3(z)=0$.

For $z<-2/\l_1$, we also have $z < z_1$ and
\beaa
\underline{\phi}_1(z)=u^{*}[1-p_1L^{*}(-z)e^{\l_1z}],\;\underline{\phi}_2(z) \geq 0,\;\underline{\phi}_3(z)=w^{*}[1-L^{*}(-z)e^{\l_1z}].
\eeaa
Then
\beaa
\mathcal{L}_3(z)  &  \geq  & -w^{*}L^{*}(-2d_3\l_1+s)e^{\l_1z}-w^{*}L^{*}(d_3\l_1^2-s\l_1)(-z)e^{\l_1z}\\
                  &\quad&+r_3\underline{\phi}_3(z)[-au^{*}p_1L^{*}(-z)e^{\l_1z}+w^{*}L^{*}(-z)e^{\l_1z}]\\
& \geq & [ - (d_3\l_1^2-s\l_1) - r_3  a u^* p_1 +  r_3  w^* ]w^*  L^* (-z)e^{\l_1z}\\
& \geq &[ - (d_3\l_1^2-s\l_1) - r_3   ]w^*  L^* (-z)e^{\l_1z},
\eeaa
using $s = 2 d_2 \lambda_1 \leq 2 d_3 \lambda_1$, $p_1 \leq 1$ and $au^* - w^* = 1$. Now notice that
$$d_3 \lambda_1^2 - s \lambda_1 = \left( \frac{d_3}{d_2} - 2 \right) r_2 \beta^* \leq - r_3, $$
due to the second part of~\eqref{vvvd0}. It follows that $\mathcal{L}_3 (z) \geq 0$ for $z < - 2 / \l_1$.
This completes the proof of this lemma.


\subsection{Proof of Lemma~\ref{0vw}} We now turn to the case when the invaded state is $E_*$, first when $s > s_*$.

(1) $\mathcal{U}_1(z)\leq 0$ for $z\neq 0$.
For $z>0$, then $\overline{\phi}_1(z) = 1,\;\underline{\phi}_2(z)\geq 0,\;\underline{\phi}_3(z)=0$ and it follows that $\mathcal{U}_1(z) \leq 0$.

For $z<0$,
$$\overline{\phi}_1(z) = e^{\sigma_1z},\;\underline{\phi}_2(z)=v_{*}(1-p_2e^{\sigma_1z}),\;\underline{\phi}_3(z)=w_{*}(1-e^{\sigma_1z}).$$
In that case,
\beaa
\mathcal{U}_1(z) &  = & (d_1\sigma_1^2-s\sigma_1)e^{\sigma_1z} +r_1e^{\sigma_1z}[ 1-e^{\sigma_1z}-kv_{*}(1-p_2e^{\sigma_1z})-b_1w_{*}(1-e^{\sigma_1z}) ]\\
                 &  = & H(\sigma_1)e^{\sigma_1z} +r_1e^{2\sigma_1z}(-1+kv_{*}p_2+b_1w_{*})\\
                 &\leq& r_1e^{2\sigma_1z}(-1+kv_{*}+b_1w_{*}) = -r_1e^{2\sigma_1z}\b_{*}\leq 0 ,
\eeaa
by $p_2\le 1$ and $\b_{*}>0$.

(2) $\mathcal{U}_2(z)\leq 0$ for $z\neq 0$.
For $z>0$, $\underline{\phi}_1(z) = 0$, $\overline{\phi}_2(z) = 1$, $\underline{\phi}_3(z) = 0$ and so $\mathcal{U}_2(z) = 0$.

For $z<0$,
$$\underline{\phi}_1(z)\geq 0,\;\overline{\phi}_2(z) = v_{*}+b_2w_{*}e^{\sigma_1z},\;\underline{\phi}_3(z) =w_{*}(1-e^{\sigma_1z}).$$
Using $1-v_{*}-b_2w_{*}=0$ and \eqref{uineq}, we get
\beaa
\mathcal{U}_2(z)  \le b_2w_{*} (d_2\sigma_1^2-s\sigma_1)e^{\sigma_1z}  \le 0\;\mbox{ for $z<0$.}
\eeaa

(3) $\mathcal{U}_3(z)\leq 0$ for $z\neq 0$.
For $z>0$, $\overline{\phi}_1(z) =1,\;\overline{\phi}_2(z) =1,\;\overline{\phi}_3(z) =2a-1$ and hence $\mathcal{U}_3(z)= 0$.

For $z<0$, we have
\beaa
\overline{\phi}_1(z) =e^{\sigma_1z},\;\overline{\phi}_2(z) =v_{*}+b_2w_{*}e^{\sigma_1z},\;\overline{\phi}_3(z) =w_{*}+Be^{\sigma_1z}.
\eeaa
Using $-1+av_{*}-w_{*} = 0$ and again \eqref{uineq}, we obtain
\beaa
\mathcal{U}_3(z) & = & B(d_3\sigma_1^2-s\sigma_1)e^{\sigma_1z} + r_3\overline{\phi}_3(z)[ae^{\sigma_1z}+ab_2w_{*}e^{\sigma_1z}-Be^{\sigma_1z}] \\
& \leq &  r_3\overline{\phi}_3(z)[ae^{\sigma_1z}+ab_2w_{*}e^{\sigma_1z}-Be^{\sigma_1z}].
\eeaa
Now note that
\beaa
ae^{\sigma_1z}+ab_2w_{*}e^{\sigma_1z}-Be^{\sigma_1z} = e^{\sigma_1z}[a+(1+ab_2)w_{*}-(2a-1)] = 0,
\eeaa
since $B = (2a -1) - w_*$ and $(1+ab_2)w_{*}=a-1$.
Hence we deduce that $\mathcal{U}_3(z) \leq 0$ for $z<0$.

(4) $\mathcal{L}_1(z)\geq 0$ for $z\neq z_0$.
For $z>z_0$, we have $\underline{\phi}_1(z) = 0,\;\overline{\phi}_2(z)\leq 1,\;\overline{\phi}_3(z)\leq 2a-1$, therefore $\mathcal{L}_1(z)=0$.

For $z<z_0<0$,
$$\underline{\phi}_1(z) = e^{\sigma_1z}-qe^{\mu\sigma_1z},\;\overline{\phi}_2(z)=v_{*}+b_2w_{*}e^{\sigma_1z},\;\overline{\phi}_3(z)=w_{*}+Be^{\sigma_1z}.$$
Then
\beaa
\mathcal{L}_1(z) &  = & -qH(\mu\sigma_1)e^{\mu\sigma_1z}\\
                  &\quad& +r_1(e^{\sigma_1z}-qe^{\mu\sigma_1z})[-e^{\sigma_1z}+qe^{\mu\sigma_1z}-kb_2w_{*}e^{\sigma_1z}-b_1Be^{\sigma_1z}]\\
                 &\geq& -qH(\mu\sigma_1)e^{\mu\sigma_1z}+r_1e^{\sigma_1z}(-e^{\sigma_1z}-kb_2w_{*}e^{\sigma_1z}-b_1Be^{\sigma_1z})\\
                 &  = & e^{\mu\sigma_1z}[-qH(\mu\sigma_1)-r_1e^{(2-\mu)\sigma_1z}(1+kb_2w_{*}+b_1B)]\\
                 &\geq& e^{\mu\sigma_1z}[-qH(\mu\sigma_1)-r_1(1+kb_2w_{*}+b_1B)]\geq 0,
\eeaa
by the choice of $\mu$ in \eqref{uumu} and $q$ in \eqref{uuq}.

(5) $\mathcal{L}_2(z)\geq 0$ for $z\not \in \{ 0, z_2\}$. First, for $z>z_2 \geq 0$, we have $\mathcal{L}_2(z)= 0$ since $\underline{\phi}_2(z) = 0$.

Next, for any $0<z<z_2$, we have $\overline{\phi}_1(z) = 1,\;\underline{\phi}_2(z) = v_{*}(1-p_2e^{\sigma_1z})<1,\;\overline{\phi}_3(z) = 2a-1 ,$
and then
\beaa
\mathcal{L}_2(z) &  = & -v_{*}p_2(d_2\sigma_1^2-s\sigma_1)e^{\sigma_1z} + r_2v_{*}(1-p_2e^{\sigma_1z})[1-h-v_{*}(1-p_2e^{\sigma_1z})-b_2(2a-1)]\\
                 &\geq& -v_{*}p_2(d_2\sigma_1^2-s\sigma_1)e^{\sigma_1z} + r_2v_{*}(1-p_2e^{\sigma_1z})[-h-b_2(2a-1)]\\
                 &\geq& -v_{*}p_2(d_2\sigma_1^2-s\sigma_1) - r_2v_{*}[h+b_2(2a-1)]\\
                 &  = &  v_{*}\left\{ -p_2(d_2\sigma_1^2-s\sigma_1)-r_2[h+b_2(2a-1)] \right\}\geq 0 ,
\eeaa
by \eqref{uineq} and the choice of $p_2$ in \eqref{uup}.

Lastly, for $z<0$, then $\overline{\phi}_1(z) = e^{\sigma_1z},\;\underline{\phi}_2(z) = v_{*}(1-p_2e^{\sigma_1z}),\;\overline{\phi}_3(z) = w_{*}+Be^{\sigma_1z}$. It follows that
\beaa
\mathcal{L}_2(z)  & =  & -v_{*}p_2(d_2\sigma_1^2-s\sigma_1)e^{\sigma_1z} + r_2v_{*}(1-p_2e^{\sigma_1z})[-he^{\sigma_1z}+v_{*}p_2e^{\sigma_1z}-b_2Be^{\sigma_1z}]\\
                  &\geq& -v_{*}p_2(d_2\sigma_1^2-s\sigma_1)e^{\sigma_1z} + r_2v_{*}(1-p_2e^{\sigma_1z})[-he^{\sigma_1z}-b_2(2a-1)e^{\sigma_1z}]\\
                  &\geq& -v_{*}p_2(d_2\sigma_1^2-s\sigma_1)e^{\sigma_1z} - r_2v_{*}[h+b_2(2a-1)]e^{\sigma_1z}\\
                  & =  & v_{*}e^{\sigma_1z}\left\{ -p_2(d_2\sigma_1^2-s\sigma_1) - r_2[h+b_2(2a-1)]\right\}\geq 0,
\eeaa
where we used $1 - v_* - b_2 w_* = 0$, and again \eqref{uineq} and the choice of $p_2$ in \eqref{uup}.

(6) $\mathcal{L}_3(z)\geq 0$ for $z\neq 0$.
For $z>0$, since $\underline{\phi}_1(z) = 0$, we immediately get that $\mathcal{L}_3(z)= 0$.

For $z<0$, then
$$\underline{\phi}_1(z)\ge 0,\underline{\phi}_2(z)=v_{*}(1-p_2e^{\sigma_1z}),\;\underline{\phi}_3(z)=w_{*}(1-e^{\sigma_1z}).$$
Using $-1+av_{*}-w_{*} = 0$, $p_2\le 1$, $d_3\le d_1$ and \eqref{uur}, we obtain
\beaa
\mathcal{L}_3(z) & \ge  & -w_{*}(d_3\sigma_1^2-s\sigma_1)e^{\sigma_1z} +r_3\underline{\phi}_3(-av_{*}p_2+w_{*})e^{\sigma_1z}\\
                 & \ge  & -w_{*}(d_1\sigma_1^2-s\sigma_1)e^{\sigma_1z}-r_3\underline{\phi}_3e^{\sigma_1z}\\
                 &\geq& w_{*}(r_1\beta_*-r_3)e^{\sigma_1z}\geq 0,
\eeaa
for $z<0$. This completes the proof of Lemma~\ref{0vw}.


\subsection{Proof of Lemma~\ref{00vw}} Finally we consider the case when the invaded state is $E_*$ and the speed $s = s_*$.

(1) $\mathcal{U}_1(z)\leq 0$ for $z\neq -2/\sigma_1$. For $z>-2/\sigma_1$,
\beaa
\overline{\phi}_1(z) = 1,\;\underline{\phi}_2(z)\geq 0,\;\underline{\phi}_3(z)=0,
\eeaa
hence $\mathcal{U}_1(z)\leq 0$.

For $z<-2/\sigma_1$,
\beaa
\overline{\phi}_1(z) = L_{*}(-z)e^{\sigma_1z},\;\underline{\phi}_2(z)=v_{*}[1-p_2L_{*}(-z)e^{\sigma_1z}],\;\underline{\phi}_3(z)=w_{*}[1-L_{*}(-z)e^{\sigma_1z}].
\eeaa
Then
\beaa
\mathcal{U}_1(z) & =  & L_{*}(-2 d_1 \sigma_1+s) e^{\sigma_1z}+L_{*}(d_1\sigma_1^2-s\sigma_1)(-z)e^{\sigma_1z}\\
                 &\quad& +r_1L_{*}(-z)e^{\sigma_1z}[ \b_{*}-L_{*}(-z)e^{\sigma_1z} +kv_{*}p_2L_{*}(-z)e^{\sigma_1z} + b_1 L_* w_{*}(-z)e^{\sigma_1z} ]\\
                 &\leq& r_1L_{*}^2(-z)^2e^{2\sigma_1z}(-1 +kv_{*}+b_1w_{*})\\
                 & =  & -r_1L_{*}^2(-z)^2e^{2\sigma_1z}\b_{*}<0 ,
\eeaa
using $p_2\leq 1$ and $\b_{*} = 1 - kv_* - b_1 w_* >0$.

(2) $\mathcal{U}_2(z)\leq 0$ for $z\neq -2/\sigma_1$. For $z>-2/\sigma_1$,
\beaa
\underline{\phi}_1(z) \ge 0,\;\overline{\phi}_2(z) = 1,\;\underline{\phi}_3(z) = 0,
\eeaa
and so $\mathcal{U}_2(z) \le 0$.

For $z<-2/\sigma_1$, we have
\beaa
\underline{\phi}_1(z)\geq 0 ,\;\overline{\phi}_2(z) = v_{*}+L_{*}b_2w_{*}(-z)e^{\sigma_1z},\;\underline{\phi}_3(z) =w_{*}[1-L_{*}(-z)e^{\sigma_1z}].
\eeaa
Then, we get
\beaa
\mathcal{U}_2(z) &\leq& L_{*}(-2d_2\sigma_1+s)e^{\sigma_1z} + L_{*}(d_2\sigma_1^2-s\sigma_1)(-z)e^{\sigma_1z}\\
 & =  & -r_1L_{*}\b_{*}(-z)e^{\sigma_1z}\leq 0 ,
\eeaa
using $v_*+b_2w_*=1$, $d_1 = d_2$ and again $\beta_* >0$.

(3) $\mathcal{U}_3(z)\leq 0$ for $z\neq -2/\sigma_1$. For $z>-2/\sigma_1$, we have $\overline{\phi}_1(z) =1$, $\overline{\phi}_2(z) =1$, $\overline{\phi}_3(z) =2a-1$, and so $\mathcal{U}_3(z)=0$.

For $z<-2/\sigma_1$,
\beaa
\overline{\phi}_1(z) =L_{*}(-z)e^{\sigma_1z},\;\overline{\phi}_2(z) =v_{*}+L_{*}b_2w_{*}(-z)e^{\sigma_1z},\;\overline{\phi}_3(z) =w_{*}+L_{*}B(-z)e^{\sigma_1z}.
\eeaa
Then
\beaa
\mathcal{U}_3(z) &=& L_{*} B (-2d_3\sigma_1+s)e^{\sigma_1z}+L_{*} B (d_3\sigma_1^2-s\sigma_1)(-z)e^{\sigma_1z}\\
                                  &\quad& + r_3\overline{\phi}_3(z)[1-a+(1+ab_2)w_{*}]L_{*}(-z)e^{\sigma_1z}\\
                 &=& L_{*} B (-2d_3\sigma_1+s)e^{\sigma_1z}+L_{*} B (d_3\sigma_1^2-s\sigma_1)(-z)e^{\sigma_1z}\leq 0 ,
\eeaa
for $z<-2/\sigma_1$, using $B= (2a -1) -w_*$ and $(1+ab_2) w_*= a-1$, as well as $-2d_3\sigma_1+s\leq 0$ and $d_3\sigma_1^2-s\sigma_1\leq 0$ which are due to \eqref{uuud0}.

(4) $\mathcal{L}_1(z)\geq 0$ for $z\neq z_0$. For $z>z_0$, $\underline{\phi}_1(z)=0$ and so $\mathcal{L}_1(z) = 0$.

For $z<z_0 \leq -2/\sigma_1$, there holds $\underline{\phi}_1(z) = [L_{*}(-z)-q(-z)^{1/2}]e^{\sigma_1z}$ and
\beaa
\overline{\phi}_2(z)=v_{*}+L_{*}b_2w_{*}(-z)e^{\sigma_1z},\;\overline{\phi}_3(z)=w_{*}+L_{*}B(-z)e^{\sigma_1z}.
\eeaa
It follows that
\beaa
\mathcal{L}_1(z) &   = & \frac{d_1}{4}q(-z)^{-3/2}e^{\sigma_1z} + \underline{\phi}_1(z)(d_1\sigma_1^2-s\sigma_1)\\
                 &\quad&+r_1\underline{\phi}_1(z)[\b_{*}-\underline{\phi}_1(z)-kL_{*}b_2w_{*}(-z)e^{\sigma_1z}-b_1 L_{*} B(-z)e^{\sigma_1z}]\\
                 &\geq & \frac{d_1}{4}q(-z)^{-3/2}e^{\sigma_1z}\\
                 &\quad&+r_1[L_{*}(-z)-q(-z)^{1/2}]e^{\sigma_1z}[-L_{*}(-z)e^{\sigma_1 z}-kL_{*}b_2w_{*}(-z)e^{\sigma_1z}-b_1 L_{*} B (-z)e^{\sigma_1z}]\\
                 &\geq & \frac{d_1}{4}q(-z)^{-3/2}e^{\sigma_1z}+r_1L_{*}^2(-z)^2e^{2\sigma_1z}(-1-kb_2w_{*}-b_1B)\\
                 &  =  & (-z)^{-3/2}e^{\sigma_1z} \left[ q\frac{d_1}{4} -  r_1L_{*}^2(-z)^{7/2}e^{\sigma_1z}(1+kb_2w_{*}+b_1B) \right]\\
                 &\geq & (-z)^{-3/2}e^{\sigma_1z}\left[ q\frac{d_1}{4} -  r_1ML_{*}^2(1+kb_2w_{*}+b_1B) \right]\geq 0 ,
\eeaa
for $z<z_0$, by the choice of $q$ in \eqref{uuuq} and $(-z)^{7/2}e^{\sigma_1 z}\le M$ for all $z<0$.

(5) $\mathcal{L}_2(z)\geq 0$ for $z\not \in \{ -2/\sigma_1, z_2 \}$. For $z>z_2$, we have $\underline{\phi}_2(z)=0$ and so $\mathcal{L}_2(z) =0$.

Then, for $-2/\sigma_1<z<z_2$,
\beaa
\overline{\phi}_1(z) = 1,\;\underline{\phi}_2(z) = v_{*}[1-p_2L_{*}(-z)e^{\sigma_1z}],\;\overline{\phi}_3(z) = 2a-1,
\eeaa
and, using $s=2d_2\sigma_1$,
\beaa
\mathcal{L}_2(z) &  = & - v_{*}p_{2}L_{*}(d_2\sigma_1^2-s\sigma_1)(-z)e^{\sigma_1z}+r_2\underline{\phi}_2(z)[1-h-\underline{\phi}_2(z)-b_2(2a-1)]\\
                 &\geq& -v_{*}p_{2}L_{*}(d_2\sigma_1^2-s\sigma_1)(-z)e^{\sigma_1z} - r_2v_{*}[h+b_2(2a-1)].
\eeaa
Since $L_{*}(-z)e^{\sigma_1z}>1$ for all $-2/\sigma_1<z<z_2$, we obtain that
\beaa
\mathcal{L}_2(z) &\geq & v_{*}\{-p_{2}(d_2\sigma_1^2-s\sigma_1) - r_2[h+b_2(2a-1)]\}\geq 0 ,
\eeaa
for $z\in(-2/\sigma_1,z_2)$, by our choice of $p_2$.

Next, for $z<-2/\sigma_1$,
\beaa
\overline{\phi}_1(z) = L_{*}(-z)e^{\sigma_1z},\;\underline{\phi}_2(z) = v_{*}[1-p_2L_{*}(-z)e^{\sigma_1z})],\;\overline{\phi}_3(z) = w_{*}+L_{*}B(-z)e^{\sigma_1z}.
\eeaa
Then we compute
\beaa
\mathcal{L}_2(z) &   = &  - v_{*}p_{2}L_{*}(d_2\sigma_1^2-s\sigma_1)(-z)e^{\sigma_1z}\\
                  &\quad& +r_2 v_{*}[1-p_2L_{*}(-z)e^{\sigma_1z}][-hL_{*}(-z)e^{\sigma_1z}+v_{*}p_2L_{*}(-z)e^{\sigma_1z}-b_2 L_{*} B (-z)e^{\sigma_1z}]\\
                 &\geq &  - v_{*}p_{2}L_{*}(d_2\sigma_1^2-s\sigma_1)(-z)e^{\sigma_1z}-r_2 v_{*}[h+b_2(2a-1)]L_{*}(-z)e^{\sigma_1z}\\
                 &   = &  v_{*}\{-p_{2}(d_2\sigma_1^2-s\sigma_1)-r_2[h+b_2(2a-1)]\}L_{*}(-z)e^{\sigma_1z}\geq 0 ,
\eeaa
using $v_*+b_2w_*=1$, $s=2d_2\sigma_1$, and again our choice of $p_2$.

(6) $\mathcal{L}_3(z)\geq 0$ for $z\neq -2/\sigma_1$. For $z>-2/\sigma_1$, we have $\underline{\phi}_3(z)=0$ and so $\mathcal{L}_3(z)=0$.

For $z<-2/\sigma_1$,
\beaa
\underline{\phi}_1(z) \ge 0, \; \underline{\phi}_2(z)=v_*[1-p_2L_{*}(-z)e^{\sigma_1z}],\;\underline{\phi}_3(z)=w_{*}[1-L_{*}(-z)e^{\sigma_1z}] .
\eeaa
Then, using $av_*-w_*=1$, $p_2\le 1$, and $s= 2 d_1 \sigma_1 \leq d_3 \sigma_1$ by \eqref{uuud0}, we get
\beaa
\mathcal{L}_3(z) & \ge  &  - w_* L_*  ( -2 d_3 \sigma_1 + s)e^{\sigma_1 z}  -w_{*}L_{*}(d_3\sigma_1^2-s\sigma_1)(-z)e^{\sigma_1z} \\
& & +r_3\underline{\phi}_3(z)(-av_{*}  +w_*)L_{*}(-z)e^{\sigma_1z}\\
& \ge  & [ - (d_3 \sigma_1^2  - s \sigma_1) - r_3]  w_* L_* (-z) e^{\sigma_1 z}.
\eeaa
Due to the second part of \eqref{uuud0}, one may infer that $\mathcal{L}_3 (z) \geq 0$ for $z<-2/\sigma_1$.
The proof of this lemma is thus completed.

\bigskip

\noindent{\bf Acknowledgements}

\noindent{The first author (YSC) and the third author (JSG) were partially supported by the Ministry of Science and Technology of Taiwan under the grants 108-2811-M-032-504 and 108-2115-M-032-006-MY3.
This work was carried out in the framework of the International Research Network ``ReaDiNet'' jointly funded by CNRS and NCTS.}



\begin{thebibliography}{99}


\bibitem{AW75}
D.G. Aronson and H.F. Weinberger,
Nonlinear diffusion in population genetics, combustion, and nerve pulse propagation,
\emph{in: J.A.Goldstein(Ed.), Partial Differential Equations and Related Topics, in: Lecture Notes in Math.}, {446}, Springer, Berlin, 1975, 5-49.


\bibitem{BP18}
Z. Bi, S. Pan, Dynamics of a predator-prey system with three species, Boundary Value Problems, 2018:162 (2018).


\bibitem{CGY17}
Y.-Y. Chen, J.-S. Guo, C.-H. Yao,
{Traveling wave solutions for a continuous and discrete diffusive predator-prey model}, {J. Math. Anal. Appl.} {445} (2017), 212-239.

\bibitem{DH04}
Y. Du, S.-B. Hsu,
{\it A diffusive predator-prey model in heterogeneous environment}, J. Differential Equations 203 (2004), 331-364.

\bibitem{DX12}
Y. Du, R. Xu, Traveling wave solutions in a three-species food-chain model with diffusion and delays, Int. J. Biomath. 5 (2012), 1250002 (17 pages).

\bibitem{DGGS}
A. Ducrot, T. Giletti, J.-S. Guo, M. Shimojo,
Asymptotic spreading speeds for a predator-prey system with two predators and one prey, arXiv:2007.02568.

\bibitem{DGM}
A. Ducrot, T. Giletti, H. Matano,
Spreading speeds for multidimensional reaction-diffusion systems of the prey-predator type, Calc. Var. Partial Differential Equations 58 (2019), no. 4, Paper No. 137, 34 pp.


\bibitem{DG18}
A. Ducrot, J.-S. Guo, {\it Asymptotic behavior of solutions to a class of diffusive predator-prey systems}, J. Evolution Equations 18 (2018), 755-775.

\bibitem{GNOW20}
J.-S. Guo, K.-I. Nakamura, T. Ogiwara and C.-C. Wu,
Traveling wave solutions for a predator-prey system with two predators and one prey,
\emph{Nonlinear Analysis: Real World Applications}, {54} (2020) 103111.



\bibitem{HL14}
Y.-L. Huang, G. Lin, Traveling wave solutions in a diffusive system with two preys and one predator, J. Math. Anal. Appl. 418 (2014), 163-184.

\bibitem{HZ03}
 J. Huang, X. Zou, Existence of traveling wave fronts of delayed reaction-diffusion systems without monotonicity,
 Disc. Cont. Dyn. Systems 9 (2003), 925-936.

\bibitem{LLR06}
 W.T. Li, G. Lin, S. Ruan, Existence of traveling wave solutions in delayed reaction-diffusion systems with applications to diffusion-competition systems,
 Nonlinearity 19 (2006), 1253-1273.

\bibitem{L14}
 G. Lin, Invasion traveling wave solutions of a predator-prey system, Nonlinear Anal. 96 (2014), 47-58.

\bibitem{LLM10}
 G. Lin, W.T. Li, M. Ma,
 Traveling wave solutions in delayed reaction diffusion systems with applications to multi-species models,
 Disc. Cont. Dyn. Systems, Ser. B 13 (2010), 393-414.

\bibitem{LR14}
 G. Lin, S. Ruan,
 Traveling wave solutions for delayed reaction-diffusion systems and applications to diffusive Lotka-Volterra competition models with distributed delays,
 J. Dyn. Diff. Equat. 26 (2014), 583-605.

\bibitem{LWZY15}
J.-J. Lin, W. Wang, C. Zhao, T.-H. Yang, Global dynamics and traveling wave solutions of two-predators-one-prey models,
Discrete Contin. Dynam. Syst. Ser. B 20 (2015), 1135-1154.


\bibitem{M01}
S. Ma, Traveling wavefronts for delayed reaction-diffusion systems via a fixed point theorem, J. Differential Equations 171 (2001), 294-314.

\bibitem{WZ01}
J. Wu, X. Zou, Traveling wave fronts of reaction-diffusion systems with delay, J. Dynam. Differential Equations 13 (2001), 651-687.


\bibitem{Z17}
T. Zhang, Minimal wave speed for a class of non-cooperative reaction-diffusion systems of three equations,
J. Differential Equations 262 (2017), 4724-4770.

\bibitem{ZJ17}
T. Zhang, Y. Jin, Traveling waves for a reaction-diffusion-advection predator-prey model,
Nonlinear Analysis: Real World Applications 36 (2017), 203-232.

\end{thebibliography}
\end{document}